\numberwithin{equation}{section}
\theoremstyle{plain}
\newtheorem{thm}{Theorem}[section]
\newtheorem{coro}{Corollary}[section]
\newtheorem{lem}{Lemma}[section]
\newtheorem{prop}{Proposition}[section]
\newtheorem{rem}{Remark}[section]
\newtheorem{assumption}{Assumption}[section]
\numberwithin{equation}{section}
\newcommand{\E}{\mathbb{E}}
\newcommand{\PP}{\mathbb{P}}
\newcommand{\R}{\mathbb{R}}
\newcommand{\floor}[1]{\lfloor #1 \rfloor}
\begin{document}

\title{\bf Estimation of a pure-jump stable Cox-Ingersoll-Ross process}

%\author{\textsc{Elise Bayraktar} and
% \textsc{Emmanuelle Cl\'ement}
% \thanks{LAMA, Univ Gustave Eiffel, Univ Paris Est Creteil, CNRS, F-77447 Marne-la-Vall\'ee, France. }}

\author[1]{\textsc{Elise Bayraktar}}
\author[1]{\textsc{Emmanuelle Cl\'ement}}
\affil[1]{LAMA, Univ Gustave Eiffel, Univ Paris Est Creteil, CNRS, F-77447 Marne-la-Vall\'ee, France.}

%%%%%%%%si Hal %%%%%%%%%%%
%%%%%%%%%%%%%%%%%%

%%%%%%%si Bernoulli%%%%%%%%%
\date{Revision February, 08 2024}
\maketitle
\noindent
{\bf Abstract.}
We consider a pure-jump stable Cox-Ingersoll-Ross ($\alpha$-stable CIR) process driven by a non-symmetric stable L\'evy process with jump activity $\alpha \in (1,2)$ and we address the joint estimation of drift, scaling and jump activity parameters  from high-frequency observations of the process on a fixed time period. 
We first prove the existence of a consistent, rate optimal and asymptotically conditionally Gaussian estimator based on an approximation of the likelihood function. Moreover, uniqueness of the drift estimators is  established assuming that the scaling coefficient and the jump activity are known or consistently estimated.
Next we propose easy-to-implement preliminary estimators of all parameters and  we improve them by a one-step procedure.

\noindent
\textbf{MSC $2020$}.  60G51; 60G52; 60J75; 62F12.

\noindent
\textbf{Key words}: Cox-Ingersoll-Ross process, L\'evy process; parametric inference; stable process; stochastic differential equation.

\section{Introduction}

Stochastic equations with jumps are widely used to model time-varying phenomena in many fields such as finance and neurosciences.
Recently some extensions, including jumps, of the classical Cox-Ingersoll-Ross  process (CIR process) introduced in \cite{CIR}  to model interest rates evolution in finance have been considered in the literature
(see Jiao et al. \cite{BranchingCIR} and \cite{alpha-Heston}). These extensions, called $\alpha$-stable CIR processes, belong to the larger class of continuous-states branching processes with immigration (CBI processes) appearing in the description of large populations evolutions. An important literature is devoted to CBI processes, we refer to the recent survey paper by Li \cite{Li}, and their dynamics can
 be described by the stochastic equation (see Fu and Li \cite{ExistsSolution})
$$
X_t=x_0+at-b\int_0^t X_s ds + \sigma \int_0^t \sqrt{X_s} d B_s + \delta \int_0^t X_{s-}^{1/ \alpha} d L_s^{\alpha} \quad t \geq 0,
$$
where $(B_t)_{t \geq 0}$ is a standard Brownian motion and $(L_t^{\alpha})_{t \geq 0}$ a non-symmetric $\alpha$-stable L\'evy process with $\alpha \in (1,2)$.
In this paper, our aim is to study parametric estimation of the process from high-frequency observations on the time period $[0,T]$ with $T$ fixed and without loss of generality, we will assume that $T=1$. In this framework, we know that the drift parameters cannot be identified in presence of a Brownian Motion. Consequently, we focus in this work on the pure-jump stable CIR process ($\sigma=0$ in the previous equation), for which the estimation of all parameters from the fixed observation window $[0,1]$ is possible, and we consider the joint estimation of  the drift parameters $a$ and $b$, the scaling parameter $\delta$ and the jump activity $\alpha$.

 Before presenting our methodology, we briefly review existing estimation results  for the stable CIR process. Although there is an abundant literature devoted to the estimation of  stochastic equations with jumps, only few results relate to this process. Barczy et al. \cite{TransformeeLaplace}
studied the estimation of the parameter $b$ in presence of a Brownian component, assuming that the process is observed continuously on $[0,T]$ with $T$ going to infinity. Their approach is based on the explicit form of the likelihood function obtained from Girsanov's Theorem. Li and Ma \cite{LongNeg} considered the estimation of $a$ and $b$ by a regression method for a pure-jump stable CIR process observed discretely with fixed time-step and observation window growing to infinity. Yang \cite{Yang} estimated the parameters $a$, $b$ and $\delta$ from high-frequency observations of a pure-jump stable CIR process with small noise. Thus  joint estimation of all parameters (including the jump activity) has never been studied before for this process  and this is the purpose of this work. 

To perform the estimation of $(a,b,\delta,\alpha)$, the main idea is to approximate the conditional distribution of $X_{t+h}$ given $X_t$ by the stable distribution appropriately centered and rescaled. This methodology has been used in Masuda \cite{Masuda19} and also in Cl\'ement and Gloter \cite{ClementEstimating} for stochastic differential equations with Lipschitz coefficients, driven by symmetric stable processes. However these results do not apply to the pure-jump stable CIR process  as the driving L\'evy process is spectrally positive and the scaling coefficient $x^{1/ \alpha}$ is non-Lipschitz and non-bounded away from zero. Thus parametric estimation of this process requires a specific study. A preliminary key result is a bound of the $q$-moment of $1/X_t$, for $q>0$. We also establish some accurate rates of convergence in $L^p$-norm (with $p<\alpha$) for increments of the process in small time. These tools allow to control the error that arrises in approximating the conditional distribution of $X_{t+h}$ given $X_t$ by the stable distribution for small time-step $h$.

The paper is organized as follows. Section \ref{S:prelim} presents the pure-jump stable CIR process and gives some key moment estimates.  
Section \ref{S:EF} describes the estimation method, based on estimating equations. It also contains some limit theorems that allow to prove our main statistical results. The main results are stated in Section \ref{S:main}. We establish the existence of a rate optimal joint estimator, global uniqueness is also obtained for the estimation of the drift parameters. We also propose some first step estimators. Using power variations methods, we first estimate the jump activity $\alpha$ and the scaling parameter $\delta$. Next  plugging these estimators in the estimating equations, we estimate the drift parameters. Finally  the asymptotic properties of these first step estimators are improved by a one-step procedure.
The main results are proved in Section \ref{S:proofs}. 
%The remaining proofs are given in the supplementary material \cite{PJ-CIR}.

Throughout the paper, we denote by $C$ (or $C_p$ if it depends on some parameter $p$) all positive  constants.

\section{Preliminaries} \label{S:prelim}

We consider a stable L\'evy process $(L_t^{\alpha})_{t \geq 0}$ with exponent $\alpha \in (1,2)$ defined on a filtered probability space  $(\Omega, \mathcal{F}, (\mathcal{F}_t)_{t \geq 0}, \PP)$, where  $(\mathcal{F}_t)_{t \geq 0}$ is the natural augmented filtration generated by $(L_t^{\alpha})_{t \geq 0}$.  We assume that  $(L_t^{\alpha})_{t \geq 0}$ is a  spectrally positive L\'evy process with characteristic function
\begin{equation}
\Phi(u)=\E(e^{iu L_1^{\alpha} })=e^{-|u|^{\alpha}(1- i \tan ( \pi \alpha/2) \text{sgn}(u))}. \label{E:char}
\end{equation}
This non-symmetric L\'evy process is strictly stable  (see for example Sato \cite{Sato}) and admits the representation 
$$
L^\alpha_t = \int_0^t \int_{\mathbb{R}_+} z d\tilde{N}(ds, dz),  \quad \tilde{N} = N - \overline{N},
$$
where  $N$ is a Poisson random measure with compensator $\overline{N}$ given by \[ \overline{N}(dt, dz) = dt F^{\alpha}(dz) \quad \text{for} \quad F^{\alpha}(dz) = \frac{C_\alpha}{z^{\alpha+1}} {\bf 1}_{]0, + \infty[} (z) dz.\] 
 From Lemma 14.11 in Sato \cite{Sato}, we have
$$
C_{\alpha}= -\frac{\alpha(\alpha-1)}{\Gamma(2-\alpha) \cos(\pi \alpha/2)}, \mbox{ with } \Gamma(\alpha)= \int_0^{\infty} x^{\alpha-1} e^{-x} dx, \quad \alpha>0.
$$
We will study parametric estimation of a pure-jump  stable Cox-Ingersoll-Ross process given by
 the stochastic differential equation 
\begin{equation} \label{eq:EDS}
    dX_t=(a-b X_t)dt + \delta X_{t-}^{1/ \alpha}dL^\alpha_t, \quad X_0=x_0  \quad t \in [0,1],
\end{equation}
where $a$, $b$, $\delta$ are real parameters. Assuming $a \geq 0$, $\delta \geq 0$ and $x_0 \geq 0$, 
 equation \eqref{eq:EDS} admits a pathwise unique strong solution (see Fu and Li \cite{ExistsSolution},  Li and Mytnik \cite{Li-Mytnik}) which is non-negative. It is also shown that $(X_t)_{t\in [0,1]}$ is a Markov process. 
  
 In all the paper we will assume the stronger condition $a>0$, $b \in \R$, $\delta >0$ and $x_0 >0$. Under this assumption, it is  established that $(X_t)_{t\in [0,1]}$ is positive (see  Foucart and Uribe Bravo \cite{Foucart} and Jiao, Ma and Scotti \cite{BranchingCIR}  ), that is $\PP( \forall t \in [0,1] ; X_t >0)=1$.
 
 Since $(X_t)$ is not bounded away from zero, we need to control $1/X_t$ and we first establish that $(X_t)$ admits moments of order $p < \alpha$. As mentioned in the introduction, the process $(X_t)_{t \geq 0}$ belongs to the class of  CBI processes  introduced by Kawasu and Watanabe \cite{KW} (we also refer to Li \cite{Li}) and consequently its Laplace transform has an explicit expression in terms of its branching mechanism and immigration rate. This allows to establish the following result,  where the case (ii) ($0<p<\alpha$) is a direct consequence of Proposition 2.8 in Li and Ma \cite{LongNeg}.

\begin{prop} \label{P:moments}

Let $(X_t)_{t\in [0,1]}$ be the solution of \eqref{eq:EDS}. Then we have 
\begin{equation} \label{eq:Moments1/X}
(i)  \quad \forall \; q > 0, \quad \sup_{t\in [0,1]} \E \left(\frac{1}{X_t^q} \right) < +\infty , 
\end{equation}
\begin{equation} \label{eq:MomentsX}
(ii) \quad  \forall \; 0 <p < \alpha, \; \forall s,t \in [0,1], \;  \E \left( \sup_{s \leq u \leq t} X_u^p  |\mathcal{F}_s\right) \leq C_p(1+ X_s^p). 
\end{equation}
\end{prop}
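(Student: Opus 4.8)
The plan is to prove the two parts by quite different means: part (ii) I would simply inherit from the cited moment estimate, while part (i) — the control of the negative moments — I would obtain from the explicit Laplace transform of the CBI process rather than from the stochastic equation itself. Recall that $(X_t)$ is a CBI process with branching mechanism $\phi(\lambda)=b\lambda+\bar{\delta}\lambda^{\alpha}$, where $\bar{\delta}>0$ is a constant proportional to $\delta^{\alpha}$ coming from $F^{\alpha}$, and immigration mechanism $\psi(\lambda)=a\lambda$, so that
$$
\E\left(e^{-\lambda X_t}\right)=\exp\left(-x_0\, v_t(\lambda)-a\int_0^t v_s(\lambda)\,ds\right),
$$
where $s\mapsto v_s(\lambda)$ solves the Riccati-type ODE $\partial_s v_s(\lambda)=-\phi(v_s(\lambda))$ with $v_0(\lambda)=\lambda$. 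The first step is to combine this with the elementary identity $x^{-q}=\Gamma(q)^{-1}\int_0^{\infty}\lambda^{q-1}e^{-\lambda x}\,d\lambda$, valid for $x>0$, and Tonelli's theorem to write
$$
\E\left(\frac{1}{X_t^q}\right)=\frac{1}{\Gamma(q)}\int_0^{\infty}\lambda^{q-1}\,\E\left(e^{-\lambda X_t}\right)d\lambda,
$$
so that bounding the negative moment reduces to establishing a sufficiently fast decay of the Laplace transform in $\lambda$, uniformly in $t\in[0,1]$.

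The key step is then a uniform lower bound on the exponent. Writing the ODE as $ds=-dv/\phi(v)$, I would change variables to obtain the exact identity
$$
\int_0^t v_s(\lambda)\,ds=\int_{v_t(\lambda)}^{\lambda}\frac{dv}{b+\bar{\delta}v^{\alpha-1}},
$$
whose integrand behaves like $\bar{\delta}^{-1}v^{1-\alpha}$ for large $v$; in the pure-branching case $b=0$ this integrates to $(\lambda^{2-\alpha}-v_t(\lambda)^{2-\alpha})/(\bar{\delta}(2-\alpha))$. Substituting into the exponent and minimizing the map $w\mapsto x_0 w-\tfrac{a}{\bar{\delta}(2-\alpha)}w^{2-\alpha}$ over $w=v_t(\lambda)\in(0,\lambda]$ — its minimizer is a constant independent of $t$ and $\lambda$ precisely because $2-\alpha<1$ — yields
$$
x_0\, v_t(\lambda)+a\int_0^t v_s(\lambda)\,ds\ \ge\ K\lambda^{2-\alpha}-M,
$$
for constants $K>0$, $M\ge 0$ and all $t\in[0,1]$, $\lambda\ge 0$. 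This is exactly where the strict positivity $a>0$ enters: the immigration term produces the stretched-exponential decay $\E(e^{-\lambda X_t})\le e^{M}e^{-K\lambda^{2-\alpha}}$, uniform in $t$. Since $2-\alpha\in(0,1)$, the Gamma-type integral $\int_0^{\infty}\lambda^{q-1}e^{-K\lambda^{2-\alpha}}\,d\lambda$ is finite for every $q>0$, which gives (i) together with the uniform bound $\sup_{t\in[0,1]}\E(1/X_t^q)<+\infty$.

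I expect the main obstacle to be twofold. First, one must handle a general sign of $b$ in the change-of-variables estimate: for $b<0$ the branching mechanism vanishes at $v_*=(-b/\bar{\delta})^{1/(\alpha-1)}$, so I would need to check that for large $\lambda$ the solution $v_t(\lambda)$ stays above $v_*$ — the fixed point cannot be crossed — so that the integrand remains positive and is still dominated by the stable term $\bar{\delta}v^{\alpha-1}$, giving the same $\lambda^{2-\alpha}$ growth up to constants. Second, the bound must be genuinely uniform in $t\in[0,1]$, including as $t\downarrow 0$ where $v_t(\lambda)\to\lambda$; the minimization argument above is designed to deliver exactly this uniformity, so the delicate point is really the comparison/monotonicity analysis of the ODE rather than the final integration.

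For part (ii), I would deduce it from Proposition 2.8 of Li and Ma \cite{LongNeg}, which controls the $p$-moments of the CBI process for $0<p<\alpha$. By the Markov property the conditional statement reduces to the case $s=0$ with starting point $X_s$, and the supremum over $u\in[s,t]$ is then recovered by a standard maximal inequality applied to the compensated stable integral driving $(X_t)$, the drift term being handled directly; this yields the stated estimate $\E\left(\sup_{s\le u\le t}X_u^p\,|\,\mathcal{F}_s\right)\le C_p(1+X_s^p)$.
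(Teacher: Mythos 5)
Your proposal is correct and shares the paper's overall architecture for (i) — the Mellin-type identity $\E(X_t^{-q})=\Gamma(q)^{-1}\int_0^\infty\lambda^{q-1}\E(e^{-\lambda X_t})\,d\lambda$ combined with the explicit CBI Laplace transform — but the way you extract the uniform-in-$t$ decay is genuinely different and, in my view, cleaner. The paper solves the Bernoulli ODE explicitly, splits the time interval at $t=u^{-\eta}$ with $\eta\in(0,\alpha-1)$, and uses the initial-state term $x_0v_t(u)$ for small $t$ (via the asymptotics $v_{u^{-\eta}}(u)\sim Cu^{\eta/(\alpha-1)}$) and the immigration integral for larger $t$; the resulting bound is piecewise. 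You instead keep $w=v_t(\lambda)$ as a free parameter, rewrite the immigration term by the change of variables $ds=-dv/\phi(v)$, and minimize $w\mapsto x_0w-\tfrac{a}{\bar\delta(2-\alpha)}w^{2-\alpha}$ globally; since $2-\alpha<1$ this infimum is a finite constant $-M$, which yields the single uniform bound $\E(e^{-\lambda X_t})\le e^{M}e^{-K\lambda^{2-\alpha}}$ without ever needing the closed form of $v_t$ or a time-splitting. This buys a simpler and slightly stronger statement (one stretched-exponential bound valid for all $t\in[0,1]$ simultaneously), at the cost of the fixed-point analysis you correctly flag for $b\neq0$: for $b<0$ one must restrict to $\lambda$ above the root $v_*$ of $\phi$ so that $v_t(\lambda)$ decreases and stays in $(v_*,\lambda]$ (the small-$\lambda$ range being harmless since $\E(e^{-\lambda X_t})\le1$ and $q>0$), and for $b>0$ one needs $b+\bar\delta v^{\alpha-1}\le C v^{\alpha-1}$ on the relevant range; both checks are routine monotonicity arguments for the scalar autonomous ODE, so I see no gap. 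For (ii) you do exactly what the paper does, namely cite Proposition 2.8 of Li and Ma; the additional maximal-inequality remark is unnecessary since that reference already gives the supremum form.
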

We now assume that we observe the discrete time Markov chain $(X_{\frac{i}{n}})_{0 \leq i \leq n}$, where $(X_t)_{t \in [0,1]}$  solves \eqref{eq:EDS}.
The next result gives some rate of convergence occuring in the discretization of the process $(X_t)$.  We mention that  (i) follows from Proposition 2.7 in Li and Ma \cite{LongNeg}.
\begin{prop}\label{P:intstoch}  Let $(X_t)_{t\in [0,1]}$ be the solution of \eqref{eq:EDS}. We have $ \forall \  0 < p < \alpha$
$$
\begin{array}{l}
(i)  \quad
\E \left( \sup_{s\in [\frac{i-1}{n}, \frac{i}{n}]} \left|  \int_\frac{i-1}{n}^s X_{t-}^{1/ \alpha} dL^\alpha_t  \right|^{p}|\mathcal{F}_\frac{i-1}{n} \right) \leq \frac{C_p}{n^{p/\alpha}}  \left( 1 + X_{\frac{i-1}{n}}^{p / \alpha} \right), \\
 (ii) \quad
        \E \left( \sup_{s\in [\frac{i-1}{n}, \frac{i}{n}]} \left|X_s - X_{\frac{i-1}{n}}\right|^{p}|\mathcal{F}_\frac{i-1}{n}\right) \leq \frac{C_p}{n^{p/\alpha}} \left( 1 + X_{\frac{i-1}{n}}^{p / \alpha} \right), 
\\
(iii) \quad
 \E \left( \left| \int_\frac{i-1}{n}^\frac{i}{n} (X_{s-}^{1/ \alpha} - X_{\frac{i-1}{n}}^{1/ \alpha}) dL^\alpha_s  \right|^{p} |\mathcal{F}_\frac{i-1}{n}\right) \leq \frac{C_p}{n^{2p/\alpha}} \left( 1 + \frac{1}{X_{\frac{i-1}{n}}^{p}} + X_{\frac{i-1}{n}}^p \right).
       \end{array}
       $$
\end{prop}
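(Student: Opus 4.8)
The estimate (i) being quoted from Li and Ma, I would concentrate on deriving (ii) from it and then on the genuinely new estimate (iii). For (ii) the plan is to decompose the increment via \eqref{eq:EDS},
\[ X_s - X_{\frac{i-1}{n}} = \int_{\frac{i-1}{n}}^s (a - b X_t)\,dt + \delta \int_{\frac{i-1}{n}}^s X_{t-}^{1/\alpha}\,dL_t^\alpha, \]
and to bound the two contributions separately after the elementary inequality $|u+v|^p \le C_p(|u|^p + |v|^p)$. The stochastic term is controlled directly by (i). For the drift term I would use $\big|\int_{\frac{i-1}{n}}^s (a-bX_t)\,dt\big| \le \frac{1}{n}\big(|a| + |b|\sup_{u\in[\frac{i-1}{n},\frac{i}{n}]}X_u\big)$ together with the moment bound \eqref{eq:MomentsX} of Proposition \ref{P:moments}; since the resulting rate $n^{-p}$ is faster than $n^{-p/\alpha}$ (because $\alpha>1$), this contribution is of lower order and is absorbed into the bound coming from (i).

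For (iii), the sharp estimate, I would first apply a fractional moment inequality for stable stochastic integrals, valid for $p<\alpha$ (in the spirit of Cl\'ement and Gloter \cite{ClementEstimating}), to reduce the conditional $p$-moment to
\[ \E\!\left(\Big(\int_{\frac{i-1}{n}}^{\frac{i}{n}}\big|X_{s-}^{1/\alpha}-X_{\frac{i-1}{n}}^{1/\alpha}\big|^\alpha\,ds\Big)^{p/\alpha}\Big|\mathcal{F}_{\frac{i-1}{n}}\right). \]
To control the integrand with the correct order I would use the mean value theorem for $x\mapsto x^{1/\alpha}$, rather than the cruder H\"older bound $|x^{1/\alpha}-y^{1/\alpha}|\le|x-y|^{1/\alpha}$ (which loses a factor and yields the suboptimal rate $n^{-p(\alpha+1)/\alpha^2}$). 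Since $1/\alpha-1<0$, the mean value theorem gives $|X_{s-}^{1/\alpha}-X_{\frac{i-1}{n}}^{1/\alpha}| \le \frac{1}{\alpha}\min(X_{s-},X_{\frac{i-1}{n}})^{1/\alpha-1}|X_{s-}-X_{\frac{i-1}{n}}|$, so that $\int_{\frac{i-1}{n}}^{\frac{i}{n}}|\cdots|^\alpha\,ds \le \frac{C}{n}\sup_{s}\big[\min(X_s,X_{\frac{i-1}{n}})^{1-\alpha}|X_s-X_{\frac{i-1}{n}}|^\alpha\big]$.

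The crucial point is to raise this to the power $p/\alpha<1$ \emph{before} taking the expectation: this brings the increment to the exponent $p<\alpha$, at which (ii) applies, whereas the exponent $\alpha$ itself is forbidden, the $\alpha$-moment of the increments being infinite. Writing $\min(X_s,X_{\frac{i-1}{n}})^{1-\alpha}\le X_s^{1-\alpha}+X_{\frac{i-1}{n}}^{1-\alpha}$, the term carrying $X_{\frac{i-1}{n}}^{1-\alpha}$ is $\mathcal{F}_{\frac{i-1}{n}}$-measurable and combines cleanly with (ii): with the prefactor $n^{-p/\alpha}$ and the rate $n^{-p/\alpha}$ of (ii) it produces the announced $n^{-2p/\alpha}$, and the powers of $X_{\frac{i-1}{n}}$ are reorganized into $1+X_{\frac{i-1}{n}}^{-p}+X_{\frac{i-1}{n}}^{p}$ via $x^{-\beta}\le 1+x^{-p}$ and $x^{\gamma}\le 1+x^p$ for $0\le\beta,\gamma\le p$. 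For the term carrying the running factor $X_s^{1-\alpha}$ I would apply H\"older's inequality with an exponent $r'>1$ close to $1$ on the increment (so that $pr'<\alpha$, keeping (ii) usable) and its conjugate on the negative power of $X_s$.

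The main obstacle is precisely this last term. The non-Lipschitz, non-bounded-below coefficient $x^{1/\alpha}$ forces the singular factor $\min(X_s,X_{\frac{i-1}{n}})^{1-\alpha}$, and its running version requires a bound on the negative moments of $X$ over the whole interval, that is a supremum-in-$s$ strengthening of \eqref{eq:Moments1/X} rather than the marginal estimate. Establishing control of $\E\big(\sup_{s\in[\frac{i-1}{n},\frac{i}{n}]}X_s^{-q}\,\big|\,\mathcal{F}_{\frac{i-1}{n}}\big)$ by $C(1+X_{\frac{i-1}{n}}^{-q})$ — exploiting the positivity and the CBI/branching structure already used for Proposition \ref{P:moments} — is, I expect, the step demanding the most care, and it is what makes this process genuinely different from the Lipschitz, bounded-coefficient setting.
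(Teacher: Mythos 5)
Your treatment of (i) and (ii) is exactly the paper's: (i) is the moment inequality of Long/Li--Ma reducing the stochastic integral to $(\int X_{t-}\,dt)^{p/\alpha}$ plus Proposition \ref{P:moments}(ii), and (ii) is the SDE decomposition with the drift term absorbed because $n^{-p}\le n^{-p/\alpha}$. For (iii) the skeleton also matches (reduce via the $p<\alpha$ moment inequality to $\E_x\bigl(\int_0^{1/n}|X_{s-}^{1/\alpha}-x^{1/\alpha}|^{\alpha}ds\bigr)^{p/\alpha}$, take the power $p/\alpha<1$ before the expectation, then invoke (ii)), but your handling of the singular prefactor leaves a genuine gap.

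The gap is the term carrying the running factor $X_s^{1-\alpha}$. Your mean-value-theorem bound $|X_{s-}^{1/\alpha}-x^{1/\alpha}|\le\frac{1}{\alpha}\min(X_{s-},x)^{1/\alpha-1}|X_{s-}-x|$ forces you, after splitting $\min(\cdot)^{1/\alpha-1}\le X_s^{1/\alpha-1}+x^{1/\alpha-1}$ and applying H\"older, to control a conditional expectation of the form $\E\bigl(\sup_{s\in[\frac{i-1}{n},\frac{i}{n}]}X_s^{-q}\,\big|\,\mathcal{F}_{\frac{i-1}{n}}\bigr)$ with an explicit dependence on $X_{\frac{i-1}{n}}$. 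No such estimate exists in the paper: Proposition \ref{P:moments}(i) is only the marginal bound $\sup_{t}\E(X_t^{-q})<\infty$, obtained from the Laplace transform of the one-dimensional law, and upgrading it to a pathwise supremum with the right conditional form is a nontrivial open step that you correctly flag but do not carry out. The point you are missing is that this step is unnecessary. The paper uses the integral form of Taylor's formula,
\begin{equation*}
|y^{1/\alpha}-x^{1/\alpha}|=\frac{|y-x|}{\alpha}\int_0^1\bigl((1-u)x+uy\bigr)^{1/\alpha-1}du
\le |y-x|\,x^{1/\alpha-1}\cdot\frac{1}{\alpha}\int_0^1(1-u)^{1/\alpha-1}du
= |y-x|\,x^{1/\alpha-1},
\end{equation*}
where one drops the $uy$ contribution inside the (negative-power) integrand and uses that $\int_0^1(1-u)^{1/\alpha-1}du=\alpha<\infty$ because $1/\alpha-1>-1$. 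This keeps only the $\mathcal{F}_{\frac{i-1}{n}}$-measurable factor $X_{\frac{i-1}{n}}^{1/\alpha-1}$, which can be pulled out of the conditional expectation; the remaining factor $\E_x\sup_s|X_s-x|^p$ is handled by (ii), and $x^{-(p-p/\alpha)}(1+x^p)\le C(1+x^{-p}+x^p)$ gives the stated bound $C_p n^{-2p/\alpha}(1+X_{\frac{i-1}{n}}^{-p}+X_{\frac{i-1}{n}}^{p})$. With this one-line replacement your argument closes; without it, the term you isolate as "the main obstacle" is indeed unproven.
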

%%%%

%\section{Joint estimation} \label{S:estim}

\section{Estimating functions and limit theorems} \label{S:EF}

In this paper, our aim is to estimate the parameters $(a,b,\delta, \alpha)$  and we will follow the methodology developed in \cite{ClementEstimating} based on 
estimating equations. From the previous estimates, we prove  that the rescaled increment 
$n^{1/\alpha} (X_{\frac{i}{n}}-X_{\frac{i-1}{n}} -\frac{a}{n} +\frac{b}{n} X_{\frac{i-1}{n}} )/ (\delta X_{\frac{i-1}{n}}^{1/ \alpha})$ converges in $L^p$-norm (for $1 \leq p < \alpha$) to $n^{1/\alpha}(L^{\alpha}_{\frac{i}{n}}-L^{\alpha}_{\frac{i-1}{n}})$. Moreover, using the scaling property of the L\'evy process $(L_t^{\alpha})_{t \geq 0}$, the distribution of  $n^{1/\alpha}(L^{\alpha}_{\frac{i}{n}}-L^{\alpha}_{\frac{i-1}{n}})$ is equal to the distribution of $L_1^{\alpha}$.  Consequently the conditional distribution of $X_{\frac{i}{n}}$ given $X_{\frac{i-1}{n}}=x$ can be approximated by the distribution of $x+a/n -bx/n +\delta x^{1/ \alpha} L_1^{\alpha} /n^{1/\alpha}$. Denoting by $\varphi_{\alpha}$ the density of the non-symmetric stable variable $L_1^{\alpha}$ with characteristic function \eqref{E:char}, we will then approximate the conditional distribution of $X_{\frac{i}{n}}$ given $X_{\frac{i-1}{n}}=x$ by
$$
\frac{n^{1/\alpha}}{\delta x^{1/ \alpha}} \varphi_{\alpha} \left(\frac{n^{1/\alpha}}{\delta x^{1/ \alpha}} ( y-x-\frac{a}{n}+\frac{bx}{n})\right). 
$$
 To estimate $\theta$, we therefore consider the following approximation of the log-likelihood function
\begin{equation}\label{def Ln}
     L_n (\theta) = \sum_{i=1}^n \log \left(\frac{n^{1/ \alpha}}{\delta X_{\frac{i-1}{n}}^{1/\alpha}}  \varphi_\alpha \left( n^{1/\alpha} \frac{X_{\frac{i}{n}} - X_{\frac{i-1}{n}} - \frac{a}{n} + \frac{b}{n} X_{\frac{i-1}{n}}}{\delta X_{\frac{i-1}{n}}^{1/ \alpha}} 
    \right) \right).
\end{equation}

The stable density $\varphi_{\alpha}$ has an explicit serie representation (see \cite{Sato} and \cite{Zolotarev}) but in practice, it is more convenient to use  the representation given in Nolan \cite{Nolan97}. To derive our results, we  do not need the explicit representation of $\varphi_{\alpha}$ but only the properties that we recall in \eqref{E : equivalents}.  

From now on we assume that  the observations $(X_{i/n})_{0 \leq i \leq n}$ are given by the stochastic equation \eqref{eq:EDS} for the parameter value $\theta_0=(\alpha_0, \beta_0, \delta_0, \alpha_0) \in (0, +\infty) \times \R \times (0, + \infty) \times (1,2) $. 

To simplify the presentation,
we write $L_t = L^{\alpha_0}_t$,  we introduce the notation $\Delta_i^n L = L_{\frac{i}{n}} - L_{\frac{i-1}{n}}$ and we set for $\theta=(a,b,\delta, \alpha)$
\begin{equation}\label{eq:def zn}
   % z^n_i (X_{\frac{i}{n}}, X_{\frac{i-1}{n}}, \theta) 
 z^n_i(\theta) = n^{1/\alpha} \left( \frac{X_{\frac{i}{n}} - X_{\frac{i-1}{n}} - \frac{a}{n} + \frac{b}{n} X_{\frac{i-1}{n}}}{\delta X_{\frac{i-1}{n}}^{1/ \alpha}} \right). 
 \end{equation} 

%%%
%As previously mentioned, we will prove that the conditional distribution of $z_i^n(\theta_0)$ given $X_{\frac{i-1}{n}}$ is close to the  distribution of $L_1$ (this is done in Appendix A of \cite{PJ-CIR}) and to estimate $\theta$ we consider the following approximation of the log-likelihood function
%recalling that $\varphi_{\alpha}$ is the density of the non-symmetric stable variable $L_1^{\alpha}$ with characteristic function \eqref{E:char}.
A natural choice of estimating function is the  approximation of the score function based on the quasi-likelihood function \eqref{def Ln}
\begin{equation} \label{def Gn} 
 G_n(\theta)= - \nabla_\theta  L_n (\theta) = - (\partial_a L_n (\theta),\partial_b L_n (\theta),\partial_{\delta} L_n (\theta),\partial_{\alpha} L_n(\theta))^T,
\end{equation}
where $M^T$ denotes the transpose of a matrix $M$.  We will estimate $\theta$ by solving the estimating equation $G_n(\theta) = 0$.
 
 %%%%%
 Since the estimation method requires the computation of the score function $G_n$ and also of the hessian matrix $\nabla_{\theta} G_n$, we recall some properties of the density $\varphi_{\alpha}$ and introduce some more notation.
 
  The map $(x, \alpha) \mapsto \varphi_{\alpha}(x) $, defined on $\R \times (1,2)$, admits derivatives of any order with respect to $(x, \alpha)$  and from Theorem 2.5.1,  Corollary 2 and Theorem 2.5.2 in Zolotarev \cite{Zolotarev} (we also refer to Chap.3 -14 in Sato \cite{Sato}), we have the following equivalents.
%%%%
For each $k>0$, $p>0$ there exist constants $C_{k,p,\alpha}$ and $C'_{k,p,\alpha}$ such that
\begin{equation}  \label{E : equivalents}
\partial^k_x \partial^p_\alpha \varphi_\alpha(x) \sim_{x \to +\infty} C_{k,p, \alpha} x^{-1-\alpha-k} (\ln x)^p,
\end{equation}
\[\partial^k_x \partial^p_\alpha \varphi_\alpha(x) \sim_{x \to -\infty} C'_{k,p, \alpha} \xi^\frac{2-\alpha+2k}{2\alpha}  (\ln |x|)^p e^{-\xi}, \quad \text{ where } \xi = (\alpha-1)\left| \frac{ x}{\alpha} \right|^\frac{\alpha}{\alpha -1}.\]
For $k=p=0$, $C_{0,0,\alpha}$ and $C'_{0,0,\alpha}$ are uniformly bounded away from 0 for $\alpha$ in any compact subset of $(1,2)$.
For $k\neq0$ or $p \neq 0$, $C_{k,p,\alpha}$ and $C'_{k,p,\alpha}$ are uniformly bounded for $\alpha$ in any compact subset of $(1,2)$.

From these equivalents, we deduce that $\E | L_1^{\alpha}|^p < \infty$ for $p \in (0,\alpha)$ and we also have
$$
\E(|L_1^{\alpha}|^q {\bf 1}_{L_1^{\alpha}<0}) <\infty, \quad \forall q >0.
$$
In what follows, we will use the  notation (where $f'=\partial_x f$): 
\begin{equation} \label{E:defh}
h_\alpha(x)=\frac{ \varphi'_\alpha}{\varphi_\alpha} (x),  \quad k_\alpha(x) = 1 + xh_\alpha(x), \quad f_\alpha(x) = \frac{\partial_\alpha \varphi_\alpha}{\varphi_\alpha} (x).
\end{equation}
We have $h_\alpha, k_\alpha, f_\alpha \in \mathcal{C}^\infty (\R \times (1,2), \R)$, note also that $ f'_\alpha = \partial_\alpha h_\alpha$. 

These  functions and their derivatives are not bounded (contrarily to the symmetric stable case) but
from the previous equivalents they satisfy, as well as their derivatives, Assumption \ref{A:conditions h} below.

\begin{assumption} \label{A:conditions h}
$h: \R \times (1,2) \to \R$ is continuously differentiable and for any compact set $A \subset (1,2)$, there exist  $C >0$ and $q>0$ such that $  \forall x \in \R$
\begin{equation*}
   \sup_{\alpha \in A} \left( |h(x, \alpha)| + |\partial_x h (x, \alpha)| + |\partial_\alpha h(x,\alpha)| \right) \leq C \left( 1 + |x|^q {\bf 1}_{x<0} + (\ln(1+x))^q {\bf 1}_{x>0} \right).
  \end{equation*} 
\end{assumption}
This allows to deduce that $h_\alpha, k_\alpha, f_\alpha$ and their derivatives are integrable with respect to the density $\varphi_{\alpha}$. Moreover 
we can  prove that
 \begin{equation} \label{esp func = 0}
 \E (h_{\alpha} (L^{\alpha}_1))= 0, \quad  \E (k_{\alpha} (L^{\alpha}_1) ) =0, \quad  \E (f_{\alpha} (L_1^\alpha))=0,
\end{equation}
 where the first equality follows from $\int \varphi'_{\alpha}(x) dx=0$, the second one from the integration by part formula $\int x \varphi'_{\alpha}(x)dx=-\int\varphi_{\alpha}(x) dx=-1$ and the third one from Lebesgue dominated Theorem $\int \partial_{\alpha} \varphi_{\alpha}(x) dx=0$.
 With similar arguments, we also have the connections
\begin{align} 
    \E( h'_{\alpha}(L^{\alpha}_1)) & = - \E( h_{\alpha}^2(L^{\alpha}_1)),  
    & \E( f'_{\alpha}(L^{\alpha}_1)) & = - \E((f_{\alpha}h_{\alpha})(L^{\alpha}_1)), \nonumber \\
    \E((\partial_{\alpha} f_{\alpha})(L^{\alpha}_1)) & = - \E( f_{\alpha}^2(L^{\alpha}_1)),  
    & \E( k'_{\alpha}(L^{\alpha}_1)) & = - \E( ( h_{\alpha}k_{\alpha})(L^{\alpha}_1)), \label{esp func} \\
    \E(L_1^{\alpha} f'_{\alpha}(L^{\alpha}_1)) & = - \E((f_{\alpha}k_{\alpha})(L^{\alpha}_1)),
    & \E(L_1^{\alpha} k'_{\alpha}(L^{\alpha}_1) ) & = - \E( k^2_{\alpha}(L_1)). \nonumber
\end{align}
We just detail the first two ones. Since $\int h'_{\alpha}(x) \varphi_{\alpha}(x) dx= - \int h_{\alpha}(x)\varphi'_{\alpha}(x)dx= -\int h_{\alpha}(x)^2 \varphi_{\alpha}(x) dx$, we deduce the first result. Combining that $\partial_{\alpha} f_{\alpha}=\partial^2_{\alpha} \varphi_{\alpha}/\varphi_{\alpha}- f_{\alpha}^2$ with $\int \partial^2_{\alpha} \varphi_{\alpha}(x)dx=0$, we obtain $ \E((\partial_{\alpha} f_{\alpha})(L^{\alpha}_1))  = - \E( f_{\alpha}^2(L^{\alpha}_1))$.

Using the previous notation, we now give an explicit expression of the estimating function $G_n$.
We first compute
 the partial derivatives of $z^n_i(\theta)$
\[\partial_az^n_i(\theta) = - \frac{n^{1/ \alpha}}{n} \frac{1}{ \delta X_{\frac{i-1}{n}}^{1/ \alpha}}, 
\quad \partial_bz^n_i(\theta) = \frac{n^{1/ \alpha}}{n} \frac{X_{\frac{i-1}{n}}}{ \delta X_{\frac{i-1}{n}}^{1/ \alpha}}, 
\quad \partial_\delta z^n_i(\theta) = -\frac{z^n_i(\theta)}{\delta},\]
\[ \partial_\alpha z^n_i(\theta) =- \frac{ \ln \left( n / X_{\frac{i-1}{n}} \right)}{\alpha^2}z^n_i(\theta),\]
and we obtain
\begin{equation}\label{E:Gn1}
    G_n^1(\theta)= -\partial_a L_n(\theta) = \frac{n^{1/ \alpha}}{n} \sum_{i=1}^n  \frac{1}{\delta X_{\frac{i-1}{n}}^{1/ \alpha}} h_\alpha(z^n_i(\theta)),
\end{equation}
\begin{equation} \label{E:Gn2}
    G_n^2(\theta)= -\partial_b L_n(\theta) =  - \frac{n^{1/ \alpha}}{n}\sum_{i=1}^n  \frac{X_{\frac{i-1}{n}}}{\delta X_{\frac{i-1}{n}}^{1/ \alpha}} h_\alpha(z^n_i(\theta)),
\end{equation}
\begin{equation} \label{E:Gn3}
   G_n^3(\theta)= -\partial_{\delta} L_n(\theta) =  \sum_{i=1}^n \frac{1}{\delta} k_\alpha(z^n_i(\theta)),
\end{equation}
\begin{equation} \label{E:Gn4}
    G_n^4(\theta)= -\partial_{\alpha} L_n(\theta) =  \sum_{i=1}^n \left( \frac{\ln \left( n / X_{\frac{i-1}{n}} \right)}{\alpha^2} k_\alpha(z^n_i(\theta)) - f_\alpha(z^n_i(\theta)) \right).
\end{equation}
To state our main statistical results, 
 we need to establish some limit theorems for such functions, after appropriate normalizations.
 The first result is an uniform law of large numbers, where the uniformity is obtained for $(a,b)$ in any compact subset $A$ of $(0, +\infty) \times \R$ and for $(\delta, \alpha) \in W_n^{(\eta)}$ a neighborhood of $(\delta_0, \alpha_0)$  defined for $\eta>0$ by
\begin{equation} \label{Voisinage Wn}
        W_n^{(\eta)} = \{ (\delta, \alpha) ; \left| \left| w_n^{-1}\begin{pmatrix} \delta - \delta_0 \\
    \alpha - \alpha_0 \end{pmatrix} \right| \right| \leq \eta\} \quad \text{ with } (\ln n)^q w_n \rightarrow 0\; \forall q>0 ,
 \end{equation}
 where $|| \cdot ||$ denotes the Euclidean norm.
This uniform law of large numbers is stated with a rate of convergence $(\ln n)^q$ for $q>0$ to compensate for the loss of a factor $\ln(n)$ in the joint estimation of the parameters.
%%%%%% Limit Th
\begin{thm} \label{Th : LFGN}
Let $h$  satisfying  Assumption \ref{A:conditions h}  and let $f:(0, + \infty) \times (0, +\infty) \times (1,2) \to \R $ be continuously differentiable. We assume that for all compact set $\overline{A} \subset (0, + \infty) \times (1,2)$, there exist   $C >0$  and $q>0$ such that $ \forall x\in (0, + \infty)$
\begin{equation} \label{eq : conditions f}
 \sup_{(\delta, \alpha) \in \overline{A}} \left(|f (x, \delta, \alpha)| + |f' (x, \delta, \alpha)| +|\partial_{(\delta, \alpha)} f (x, \delta, \alpha)| \right) \leq C \left( 1 + x^q  + \frac{1}{x^q} \right).
\end{equation} 
 With $W_n^{(\eta)}$ defined in \eqref{Voisinage Wn}, for any compact set $A \subset (0,  +\infty) \times \R$  and $\forall q>0$ we have  the convergence in probability to zero of
    \begin{align*}
    \sup_{\theta \in A \times W_n^{(\eta)}  }  (\ln n)^q\left| \frac{1}{n}\sum_{i=1}^n f(X_{\frac{i-1}{n}}, \delta, \alpha)h (z_i^n(\theta),\alpha) - \int_0^1 f(X_s, \delta_0, \alpha_0)ds \E h(L_1,\alpha_0) \right|.  
\end{align*}
% Moreover if $\E h(L_1,\alpha_0) = 0 $, we have the convergence in probability
 %   \begin{equation}\label{Th : 1/n cv ln}
%    \forall q > 0, \quad \sup_{(a,b) \in A, \; (\delta, \alpha) \in W_n^{(\eta)} } \left| \frac{\ln(n)^q}{n}\sum_{i=1}^n f(X_{\frac{i-1}{n}}, \delta, \alpha)h(z_i^n(\theta),\alpha) \right| \to 0.\
%\end{equation}
\end{thm}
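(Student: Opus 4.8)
The plan is to prove the stronger statement $\E\big[\sup_{\theta\in A\times W_n^{(\eta)}}(\ln n)^q|\Phi_n(\theta)|\big]\to0$, where
$$\Phi_n(\theta)=\frac1n\sum_{i=1}^n f(X_{\frac{i-1}{n}},\delta,\alpha)\,h(z_i^n(\theta),\alpha)-\int_0^1 f(X_s,\delta_0,\alpha_0)\,ds\;\E h(L_1,\alpha_0).$$
First I would turn the shrinking window into a fixed domain via the change of variables $(\delta,\alpha)=(\delta_0,\alpha_0)+w_n u$, with $u$ in the fixed ball $\{\|u\|\le\eta\}$ of \eqref{Voisinage Wn}, so that $\theta$ ranges over the fixed compact $K=A\times\{\|u\|\le\eta\}\subset\R^4$. (If needed, a preliminary localization on $\{T^{-1}\le\inf_{[0,1]}X_s,\ \sup_{[0,1]}X_s\le T\}$, of probability tending to one as $T\to\infty$, renders all weights bounded and secures the integrability used below; I suppress it from the notation.) On $K$ a Sobolev embedding $W^{1,\rho}(K)\hookrightarrow\mathcal C(K)$ with $\rho>4$ bounds $\sup_K|\Psi_n|$, $\Psi_n:=(\ln n)^q\Phi_n$, by $C\big(\int_K|\Psi_n|^\rho+\int_K|\nabla\Psi_n|^\rho\big)^{1/\rho}$, so it suffices to bound $\E|\Psi_n(\theta)|^\rho$ and $\E|\nabla_{(a,b,u)}\Psi_n(\theta)|^\rho$ pointwise, uniformly in $\theta\in K$. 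Differentiating $\Phi_n$ only replaces $h$ by $\partial_x h,\partial_\alpha h$ and $f$ by $f',\partial_{(\delta,\alpha)}f$ — all controlled, together with the original functions, by Assumption \ref{A:conditions h} and \eqref{eq : conditions f} — and produces the explicit derivatives of $z_i^n$ recorded before \eqref{E:Gn1}: the $a,b$–derivatives carry $n^{1/\alpha-1}\to0$, the $u$–derivatives carry $w_n$ by the chain rule, and the $\alpha$–derivative carries $\ln(n/X_{\frac{i-1}{n}})/\alpha^2$. Thus $\nabla\Phi_n$ has the same structure as $\Phi_n$, and the same estimates apply to both.

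For fixed $\theta\in K$ I would compare the empirical term to the ideal one built on $\zeta_i^n:=n^{1/\alpha_0}\Delta_i^n L$, which is independent of $\mathcal F_{\frac{i-1}{n}}$ and satisfies $\zeta_i^n\stackrel{d}{=}L_1$ by scaling, so that $\E(h(\zeta_i^n,\alpha_0)\mid\mathcal F_{\frac{i-1}{n}})=\E h(L_1,\alpha_0)$. Inserting $X_{\frac{i}{n}}-X_{\frac{i-1}{n}}=\int_{\frac{i-1}{n}}^{\frac{i}{n}}(a_0-b_0X_s)\,ds+\delta_0\int_{\frac{i-1}{n}}^{\frac{i}{n}}X_{s-}^{1/\alpha_0}\,dL_s$ into \eqref{eq:def zn} and factoring out $\delta_0X_{\frac{i-1}{n}}^{1/\alpha_0}\Delta_i^n L$ gives
$$z_i^n(\theta)=\frac{\delta_0}{\delta}\,n^{1/\alpha-1/\alpha_0}\,X_{\frac{i-1}{n}}^{1/\alpha_0-1/\alpha}\,\zeta_i^n+R_i^n(\theta),$$
where the prefactor equals $1+O\big(\|w_n\|(\ln n+|\ln X_{\frac{i-1}{n}}|)\big)$ because $|\delta-\delta_0|\vee|\alpha-\alpha_0|\le C\|w_n\|$ and $w_n\ln n\to0$, and where $R_i^n(\theta)$ gathers the drift remainder $-b_0\int_{\frac{i-1}{n}}^{\frac{i}{n}}(X_s-X_{\frac{i-1}{n}})\,ds$, the affine $(a,b)$–correction of order $n^{1/\alpha-1}$ (this is how the fixed-compact uniformity in $(a,b)$ is handled cheaply, since $n^{1/\alpha-1}\to0$), and the stochastic remainder $\delta_0\int_{\frac{i-1}{n}}^{\frac{i}{n}}(X_{s-}^{1/\alpha_0}-X_{\frac{i-1}{n}}^{1/\alpha_0})\,dL_s$. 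Proposition \ref{P:intstoch}(ii)–(iii) together with the inverse moments of Proposition \ref{P:moments}(i) then yield $(\E|R_i^n(\theta)|^p)^{1/p}\le C_p\,n^{-c_0}$ for $p<\alpha_0$ with $c_0=1-1/\alpha_0>0$, uniformly in $\theta\in K$.

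Next, by the mean value theorem applied to $(x,\alpha)\mapsto f\,h$, writing $z_i^n(\theta)-\zeta_i^n=(\text{prefactor}-1)\zeta_i^n+R_i^n(\theta)$ and using the growth controls, $|f(X_{\frac{i-1}{n}},\delta,\alpha)h(z_i^n(\theta),\alpha)-f(X_{\frac{i-1}{n}},\delta_0,\alpha_0)h(\zeta_i^n,\alpha_0)|$ is bounded by an admissible weight (a polynomial in $X_{\frac{i-1}{n}}$, $1/X_{\frac{i-1}{n}}$ and in $|\zeta_i^n|\mathbf{1}_{\zeta_i^n<0}$) times $\|w_n\|(\ln n+|\ln X_{\frac{i-1}{n}}|)|\zeta_i^n|+|R_i^n(\theta)|$. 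Since $L_1$ has all negative moments and positive moments of order $<\alpha_0$ (recalled after \eqref{E : equivalents}) and $X,1/X$ enjoy the moments of Proposition \ref{P:moments}, Hölder's inequality and the previous bound give $\frac1n\sum_i\E|\cdots|\le C(\|w_n\|\ln n+n^{-c_0})$. It remains to treat the ideal sum $\frac1n\sum_i f(X_{\frac{i-1}{n}},\delta_0,\alpha_0)h(\zeta_i^n,\alpha_0)$, split as $\frac1n\sum_i f(X_{\frac{i-1}{n}},\delta_0,\alpha_0)(h(\zeta_i^n,\alpha_0)-\E h(L_1,\alpha_0))+\frac1n\sum_i f(X_{\frac{i-1}{n}},\delta_0,\alpha_0)\E h(L_1,\alpha_0)$. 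The first is a normalized sum of $\mathcal F$–martingale increments, so an $L^p$ martingale inequality with $p\in(1,\alpha_0)$ (the relevant moments being finite by Proposition \ref{P:moments} and \eqref{eq : conditions f}) gives an $L^p$–bound $O(n^{-(1-1/p)})$; the second converges to $\int_0^1 f(X_s,\delta_0,\alpha_0)\,ds\,\E h(L_1,\alpha_0)$ with $L^p$–error $O(n^{-1/\alpha_0})$ by the regularity of $f$ and Proposition \ref{P:intstoch}(ii). Collecting the three estimates yields $\E|\Phi_n(\theta)|^\rho=O(\|w_n\|^{\rho}+n^{-c\rho})$ for some $c>0$, uniformly on $K$; repeating the argument for $\nabla\Phi_n$ (the extra $n^{1/\alpha-1}$, $w_n$ and $\ln(n/X_{\frac{i-1}{n}})$ factors being harmless) gives the same order. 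Because $\|w_n\|$ and $n^{-c}$ decay faster than any negative power of $\ln n$ — precisely the requirement $(\ln n)^q w_n\to0$ imposed in \eqref{Voisinage Wn} — multiplying by $(\ln n)^q$ and inserting into the Sobolev bound gives $\E\sup_K|\Psi_n|\to0$, proving the theorem.

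The step I expect to be the main obstacle is the pointwise comparison of the last two paragraphs: one must simultaneously (i) pay for the unboundedness of $h,f$ and their derivatives — in particular the polynomial blow-up of $h$ as $x\to-\infty$ — using the finiteness of $\E(|L_1|^{q'}\mathbf{1}_{L_1<0})$ and transferring these moments from $\zeta_i^n$ to $z_i^n(\theta)$ through their $L^p$–closeness, (ii) absorb the logarithmic weights $\ln(n/X_{\frac{i-1}{n}})$ produced by $\partial_\alpha z_i^n$ into the window rate $(\ln n)^q w_n\to0$, and (iii) keep every constant uniform over the reparametrized compact $K$; the careful Hölder bookkeeping needed to combine these three effects, with exponents always kept below $\alpha_0$ for the positive moments of $X$, is the technical heart of the proof.
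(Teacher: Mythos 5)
Your overall skeleton is the right one and matches the paper's first-order strategy: compare $z_i^n(\theta)$ to $n^{1/\alpha_0}\Delta_i^n L$ via Propositions \ref{P:moments} and \ref{P:intstoch}, treat the centered ideal sum as a triangular array of martingale increments, and handle the Riemann-sum error separately. However, two steps in your argument do not go through as written, and they are exactly where the paper's proof has its real content.

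First, the uniformity device. You propose a Sobolev embedding $W^{1,\rho}(K)\hookrightarrow\mathcal C(K)$ with $\rho>4$, which requires finite $\rho$-th moments of $\nabla_\theta\Phi_n(\theta)$. But $\partial_\delta z_i^n(\theta)=-z_i^n(\theta)/\delta$ and $\partial_\alpha z_i^n(\theta)=-\alpha^{-2}\ln(n/X_{\frac{i-1}{n}})\,z_i^n(\theta)$ are proportional to $z_i^n(\theta)$ itself, so the chain rule produces terms of the form $z_i^n(\theta)\,\partial_x h(z_i^n(\theta),\alpha)$. Under Assumption \ref{A:conditions h} this has growth $x(\ln x)^q$ as $x\to+\infty$, while $z_i^n(\theta)$ is essentially a stable variable with no finite moments of order $\geq\alpha_0<2$. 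Hence $\E\left|\nabla\Phi_n(\theta)\right|^\rho=+\infty$ for any $\rho\geq\alpha_0$, and the factors $w_n$ or $n^{1/\alpha-1}$ in front cannot repair an infinite expectation. The paper avoids this entirely: it never differentiates the empirical process in $\theta$, but bounds the supremum pathwise inside the expectation using the mean value inequality \eqref{E:Rmk h(x) - h(z)} applied to differences of $h$, combined with the uniform (in $\theta$) conditional bounds of Lemmas \ref{L:zn-L1} and \ref{L:Esp(zn puissance k)} and H\"older with one exponent kept below $\alpha_0$.

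Second, the negative-part moments. Since $h$ may blow up polynomially as $x\to-\infty$, you need $\E\bigl(\sup_\theta|z_i^n(\theta)|^{q}{\bf 1}_{z_i^n(\theta)<0}\bigr)<\infty$ for arbitrary $q$. You assert this transfers from $\zeta_i^n=n^{1/\alpha_0}\Delta_i^n L$ "through their $L^p$-closeness", but that closeness only holds for $p<\alpha_0$, and the remainder $R_i^n(\theta)$ (which contains $\int(X_{s-}^{1/\alpha_0}-X_{\frac{i-1}{n}}^{1/\alpha_0})dL_s$) has no moments of order $\geq\alpha_0$; so no bound of the form $\E|z|^{q}{\bf 1}_{z<0}\leq C(\E|\zeta|^q{\bf 1}_{\zeta<0}+\E|z-\zeta|^q)$ is available for large $q$. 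This is precisely what Lemma \ref{L:Esp(zn puissance k)} supplies, and its proof needs a genuinely different idea: on the event where $z_i^n(\theta)$ is very negative the stochastic integral $I_1^n$ must itself be negative, and the time-change representation $I_1^n=\tilde L_{\hat T_{n,1}}$ with $\hat T_{n,1}$ bounded on $\Omega_K$, together with the super-exponential lower-tail estimate for spectrally positive stable processes (Lemma 2.4 of \cite{ProbaAlphaStableNegative}), yields all moments of $|I_1^n|{\bf 1}_{I_1^n<0}$. This ingredient is absent from your proposal and cannot be replaced by moment transfer. (A minor additional point: your localization $\inf_{[0,1]}X_s\geq T^{-1}$ is not needed and would itself require justification; the paper only truncates from above and uses $\sup_t\E X_t^{-q}<\infty$ for the inverse powers.)
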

We next state a Central Limit Theorem for triangular arrays. This CLT is obtained for the stable convergence in law which is stronger than the usual convergence in law and appropriate to our statistical problem.  This convergence is denoted by
$\xrightarrow[]{\mathcal{L}-s}$ and we refer to Jacod and Protter \cite{TriangularArray} for the definition and properties of the stable convergence in law.   

\begin{thm} \label{Th : TCL}
Let $H = (h_j)_{1 \leq j \leq d} : \R \to \R^d$. We assume that $\forall \; j \in \{1, ... , d \}$
$h_j$ satisfies Assumption \ref{A:conditions h} and $\E(h_j(L_1)) = 0$.
Let $F = (f_{kj})_{1 \leq k,j \leq d} : (0, + \infty) \to \R^{d \times d} $ be a continuous function. We assume that there exist  $C >0$ and $q>0$ such that
\begin{equation}\label{eq : conditions f i}
    \forall x\in (0, + \infty), \quad ||F (x)|| \leq C \left( 1 + x^q  + \frac{1}{x^q} \right).
\end{equation} 
Then we have the stable convergence in law with respect to $\sigma(L_s, s \leq 1)$
\begin{equation}
    \frac{1}{\sqrt{n}} \sum_{i=1}^n F(X_{\frac{i-1}{n}}) H(z^n_i(\theta_0))  \xrightarrow[n \to \infty]{\mathcal{L}-s} \Sigma^{1/2} \mathcal{N},
\end{equation}
where $\mathcal{N}$ is a standard Gaussian variable independent of $\Sigma$ and 
\begin{equation*}
    \Sigma = \int_0^1 F(X_s)  \Sigma' F(X_s)^T ds \;
\text{ with } \;    \Sigma'=(\Sigma'_{k,j}), \; \Sigma'_{k,j}=\E (h_k h_j(L_1)) \; 1 \leq k,j  \leq d .
\end{equation*}
\end{thm}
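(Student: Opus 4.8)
The plan is to apply a stable central limit theorem for martingale triangular arrays, in the form of Jacod and Protter \cite{TriangularArray}. The first step is to replace, inside $H$, the increment $z_i^n(\theta_0)$ by the rescaled driving increment $\bar z_i^n := n^{1/\alpha_0}\Delta_i^n L$. By the scaling property of $(L_t)$ recalled in Section \ref{S:EF}, $\bar z_i^n$ has the law of $L_1$ and is independent of $\mathcal{F}_{\frac{i-1}{n}}$; since $\E(h_j(L_1))=0$ for each $j$, the rescaled array $\xi_i^n := \frac{1}{\sqrt n}F(X_{\frac{i-1}{n}})H(\bar z_i^n)$ is a martingale difference array for $(\mathcal{F}_{\frac{i}{n}})$. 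I would first show that $\frac{1}{\sqrt n}\sum_{i=1}^n F(X_{\frac{i-1}{n}})H(z_i^n(\theta_0))$ and $\sum_{i=1}^n\xi_i^n$ have the same limit in probability, then prove the stable CLT for $\sum_i\xi_i^n$.

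For the reduction, set $\chi_i^n:=z_i^n(\theta_0)-\bar z_i^n$. From \eqref{eq:EDS}, $X_{\frac{i}{n}}-X_{\frac{i-1}{n}}-\frac{a_0}{n}+\frac{b_0}{n}X_{\frac{i-1}{n}}=-b_0\int_{\frac{i-1}{n}}^{\frac{i}{n}}(X_s-X_{\frac{i-1}{n}})\,ds+\delta_0\int_{\frac{i-1}{n}}^{\frac{i}{n}}X_{s-}^{1/\alpha_0}\,dL_s$, so $\chi_i^n$ equals $n^{1/\alpha_0}/(\delta_0 X_{\frac{i-1}{n}}^{1/\alpha_0})$ times a drift remainder controlled by Proposition \ref{P:intstoch}(ii) and the term $\delta_0\int_{\frac{i-1}{n}}^{\frac{i}{n}}(X_{s-}^{1/\alpha_0}-X_{\frac{i-1}{n}}^{1/\alpha_0})\,dL_s$ controlled by Proposition \ref{P:intstoch}(iii); this yields $\E(|\chi_i^n|^p\mid\mathcal{F}_{\frac{i-1}{n}})\le C_p\,n^{-p/\alpha_0}\,\Psi_p(X_{\frac{i-1}{n}})$ for a function $\Psi_p$ of polynomial growth in $X$ and $1/X$. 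A first-order expansion $H(z_i^n(\theta_0))-H(\bar z_i^n)=\big(\int_0^1 H'(\bar z_i^n+t\chi_i^n)\,dt\big)\chi_i^n$, together with Assumption \ref{A:conditions h} (polynomial/logarithmic growth of $H'$), the negative moments of Proposition \ref{P:moments}(i), and the bound $\E(|L_1|^q{\bf 1}_{L_1<0})<\infty$ for all $q>0$, then bounds the conditional expectation of each summand by $C\,n^{-1/\alpha_0}$ times a polynomial in $X_{\frac{i-1}{n}}$. Summing, the difference is dominated in conditional $L^1$ by $n^{1/2-1/\alpha_0}\,\frac1n\sum_i(\text{polynomial in }X_{\frac{i-1}{n}})$; the Riemann sum converges a.s. to a finite integral (the path $s\mapsto X_s$ being càdlàg and positive by Proposition \ref{P:moments}), while $n^{1/2-1/\alpha_0}\to0$ precisely because $\alpha_0\in(1,2)$, so the difference tends to $0$ in probability.

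For the CLT applied to $\sum_i\xi_i^n$, I would verify the three conditions of \cite{TriangularArray}. Since $\bar z_i^n\overset{d}{=}L_1$ is independent of $\mathcal{F}_{\frac{i-1}{n}}$, the conditional covariance is exactly $\sum_{i=1}^n\E(\xi_i^n(\xi_i^n)^T\mid\mathcal{F}_{\frac{i-1}{n}})=\frac1n\sum_{i=1}^n F(X_{\frac{i-1}{n}})\Sigma'F(X_{\frac{i-1}{n}})^T$, a Riemann sum of the càdlàg integrand $s\mapsto F(X_s)\Sigma'F(X_s)^T$ that converges a.s. to $\Sigma=\int_0^1 F(X_s)\Sigma'F(X_s)^T\,ds$. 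For the Lindeberg condition I would use a conditional Lyapunov bound: $H(L_1)\in L^{2+\gamma}$ for some $\gamma>0$ (again by Assumption \ref{A:conditions h} and the moments of $L_1$), so $\sum_i\E(\norme{\xi_i^n}^{2+\gamma}\mid\mathcal{F}_{\frac{i-1}{n}})$ equals $n^{-\gamma/2}$ times a convergent Riemann sum of $\norme{F(X_s)}^{2+\gamma}$, hence tends to $0$.

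The main obstacle is the third, stability condition, which forces the limit $\mathcal{N}$ to be independent of $\sigma(L_s,s\le1)$ although each $\bar z_i^n$ is itself $\sigma(L_s,s\le1)$-measurable. Here one must show $\sum_{i=1}^n\E(\xi_i^n\,\Delta_i^n N\mid\mathcal{F}_{\frac{i-1}{n}})\to0$ in probability for $N=L$ and for a generating family of bounded martingales orthogonal to $L$ (by the martingale representation property of the filtration of $L$, of the form $N_t=\int_0^t\int g(z)\,\tilde N(ds,dz)$ with $\int z g(z)F^{\alpha_0}(dz)=0$). For $N=L$ the scaling property gives $\E(H(\bar z_i^n)\Delta_i^n L\mid\mathcal{F}_{\frac{i-1}{n}})=n^{-1/\alpha_0}\E(H(L_1)L_1)$, so the sum equals $n^{1/2-1/\alpha_0}\big(\frac1n\sum_i F(X_{\frac{i-1}{n}})\big)\E(H(L_1)L_1)$, which vanishes since $\E(H(L_1)L_1)$ is finite (Assumption \ref{A:conditions h} and $\E|L_1|<\infty$ as $\alpha_0>1$) and $n^{1/2-1/\alpha_0}\to0$. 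The corresponding covariance estimates for the orthogonal martingales $N$, obtained from the self-similarity of $L$ and again exploiting the gain $1/\alpha_0>1/2$, are the delicate computations I expect to cost the most effort; the stable CLT of \cite{TriangularArray} then yields the claimed convergence with $\Sigma$ as above.
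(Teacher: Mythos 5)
Your first step (replacing $z_i^n(\theta_0)$ by $\bar z_i^n=n^{1/\alpha_0}\Delta_i^n L$ inside $H$) is the same as the paper's, and the gain $n^{1/2-1/\alpha_0}\to 0$ is indeed what makes it work. One ingredient is missing, though: your Taylor expansion requires a bound on the conditional moments of $H'\bigl((1-t)\bar z_i^n+t\,z_i^n(\theta_0)\bigr)$, and since Assumption \ref{A:conditions h} allows $|H'(x)|$ to grow like $|x|^q$ as $x\to-\infty$ with $q$ arbitrary, you need moments of \emph{arbitrary} order of the negative part of $z_i^n(\theta_0)$. The bound $\E(|L_1|^q{\bf 1}_{L_1<0})<\infty$ only covers $\bar z_i^n$; it cannot be transferred to $z_i^n(\theta_0)$ through $\chi_i^n$, which has moments only of order $p<\alpha_0$. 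The paper needs a dedicated argument for this (Lemma \ref{L:Esp(zn puissance k)}, via the Kallsen--Shiryaev time change and the left-tail estimate for spectrally positive stable processes, after localization on $\Omega_K$); without it your reduction step does not close.

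The more serious gap is in the CLT itself, where you and the paper part ways. The paper does not verify any orthogonality conditions: following \cite{ClementEstimating1}, it invokes Lemma 2.8 of \cite{JacodTCL}, which reduces the stable convergence of $\frac{1}{\sqrt n}\sum_i F(X_{\frac{i-1}{n}})H(n^{1/\alpha_0}\Delta_i^n L)$, with a limit independent of $L$, to the convergence in law of the unweighted pair $\bigl(\sum_i\Delta_i^n L,\ \frac{1}{\sqrt n}\sum_i H(n^{1/\alpha_0}\Delta_i^n L)\bigr)$ to $(L_1,\gamma')$ with $\gamma'$ Gaussian independent of $L_1$; this is then checked by an elementary expansion of the characteristic function of a single increment. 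Your route through the general martingale CLT of \cite{TriangularArray} instead requires $\sum_i\E(\xi_i^n\,\Delta_i^n N\mid\mathcal F_{\frac{i-1}{n}})\to 0$ for \emph{all} bounded martingales $N$ of the filtration of $L$, i.e.\ all $N_t=\int_0^t\!\int g\,d\tilde N$ — and that is precisely where the whole difficulty of obtaining a limit independent of $\sigma(L)$, although each $H(\bar z_i^n)$ is $\sigma(L)$-measurable, is concentrated. You explicitly defer this computation; it is feasible (Mecke formula, splitting of jumps by size, decay of $\E\,H(L_1+n^{1/\alpha_0}z)-\E\,H(L_1)$), but it is not routine and it is the heart of the matter, so as written the proof is incomplete. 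Note also that you cannot test against $N=L$ itself: $L$ has no second moment since $\alpha_0<2$, so it is not an admissible martingale in that theorem, and the identity $\E(H(\bar z_i^n)\Delta_i^n L\mid\mathcal F_{\frac{i-1}{n}})=n^{-1/\alpha_0}\E(H(L_1)L_1)$ must instead be run on truncated or compensated-jump versions of $L$.
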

%The proofs of Theorem \ref{Th : LFGN} and Theorem \ref{Th : TCL} are respectively given in Appendix B and Appendix C of  \cite{PJ-CIR}.
With this background, we turn to the estimation problem.
%%%%%%% Estim

\section{Main results} \label{S:main}
\subsection{Joint estimation}
We first prove the existence of a rate optimal joint estimator. To this end
we consider $u_n$ the block-diagonal  rate matrix
\begin{equation}
    u_n = \begin{pmatrix} \label{eq:un}
\frac{1}{n^{1/ \alpha_0 - 1/2}} Id_2 & 0\\
 0 & \frac{1}{\sqrt{n}} v_n
\end{pmatrix} \quad \mbox{ where } 
 Id_2=\begin{pmatrix}
1 & 0 \\
0 & 1 \\
\end{pmatrix},
\quad  v_n = \begin{pmatrix}
v_n^{11} & v_n^{12} \\
v_n^{21} & v_n^{22} \\
\end{pmatrix}.
\end{equation}
We assume that 
\begin{equation} \label{E:def-vr}
r_n(\theta_0) v_n \to \overline{v} = \begin{pmatrix}
\overline{v}_{11} & \overline{v}_{12} \\
\overline{v}_{21} & \overline{v}_{22} \\
\end{pmatrix}
\; 
\text{with} \; \det(\overline{v}) >0 \; \text{and} \;
 r_n(\theta) = \begin{pmatrix}
    \frac{1}{\delta} & \frac{\ln(n)}{\alpha^2} \\
    0 & 1
\end{pmatrix}. 
\end{equation}
We can choose for example $v_n = r_n^{-1}(\theta_0)$ and in that case $\overline{v} = Id_2$.

We next define the information matrix $I(\theta)$ by
\begin{equation} \label{E:Info}
I(\theta) = \begin{pmatrix}
    I^{11} (\theta) &  I^{21} (\theta)^T\\
     I^{21} (\theta) &  I^{22} (\theta) 
\end{pmatrix}. 
\end{equation}
$I^{11} (\theta)$ and $I^{22} (\theta)$ are symmetric matrices  (for such symmetric matrices, we only write the lower diagonal terms and just write $symm$ for the upper diagonal terms) given respectively by
\[I^{11} (\theta) = \begin{pmatrix}
    \frac{1}{\delta^2} \int_0^1 \frac{1}{X_s^{2/ \alpha}} ds \E( h^2_{\alpha} (L^{\alpha}_1)) 
    & \text{symm}  \\
   - \frac{1}{\delta^2} \int_0^1 \frac{X_s}{X_s^{2/ \alpha}} ds \E( h^2_{\alpha} (L^{\alpha}_1)) 
    & \frac{1}{\delta^2} \int_0^1 \frac{X_s^2}{X_s^{2/ \alpha}} ds \E( h^2_{\alpha} (L^{\alpha}_1)) 
\end{pmatrix}, \]
 \[I^{22} (\theta) = \begin{pmatrix}
    \E( k^2_{\alpha} (L^{\alpha}_1)) 
    &  \text{symm} \\
  -  \frac{1}{\alpha^2} \int_0^1 \ln(X_s) ds \E( k^2_{\alpha} (L^{\alpha}_1)) - \E(f_{\alpha}k_{\alpha} (L^{\alpha}_1)) 
    &  I_{22}^{22} (\theta)
\end{pmatrix} ,\]
with
\[
I_{22}^{22}(\theta) =\frac{1}{\alpha^4} \int_0^1 (\ln X_s)^2 ds \E( k^2_{\alpha} (L^{\alpha}_1))
    + \frac{2}{\alpha^2} \int_0^1 \ln(X_s) ds \E( f_{\alpha}k_{\alpha} (L^{\alpha}_1)) 
    + \E( f_{\alpha}^2 (L^{\alpha}_1)). 
\]
The matrix $I^{21} (\theta)$ is given by
\[I^{21} (\theta) = \begin{pmatrix}
    \frac{1}{\delta} \int_0^1 \frac{ds}{X_s^{1/ \alpha}} \E( h_{\alpha}k_{\alpha} (L^{\alpha}_1)) 
    & - \frac{1}{\delta} \int_0^1 \frac{X_s }{X_s^{1/ \alpha}}ds \E( h_{\alpha}k_{\alpha} (L^{\alpha}_1))  \\
  I^{21}_{21}(\theta)
    &  I^{21}_{22}(\theta) 
    \end{pmatrix}, \]
   where
\begin{align*}
I^{21}_{21}(\theta)= - \frac{1}{\delta \alpha^2} \int_0^1 \frac{\ln(X_s)}{X_s^{1/ \alpha}} ds \E( h_{\alpha}k_{\alpha} (L^{\alpha}_1)) - \frac{1}{\delta} \int_0^1 \frac{ds}{X_s^{1/ \alpha}} \E( f_{\alpha}h_{\alpha} (L^{\alpha}_1)), \\
I^{21}_{22}(\theta) = \frac{1}{\delta \alpha^2} \int_0^1 \frac{\ln(X_s) X_s}{X_s^{1/ \alpha}} ds \E( h_{\alpha}k_{\alpha} (L^{\alpha}_1)) + \frac{1}{\delta} \int_0^1 \frac{X_s }{X_s^{1/ \alpha}}ds \E( f_{\alpha}h_{\alpha} (L^{\alpha}_1)). 
\end{align*}
With this notation, we can state our main result.

\begin{thm}\label{Th-local}
    Let $G_n$ be defined by \eqref{def Gn}. Then there exists an estimator $(\hat{\theta}_n)$ solving the equation $G_n(\theta) = 0$ with probability tending to 1, that converges in probability to $\theta_0$. 
   The information matrix $I(\theta_0)$ defined in  \eqref{E:Info} is positive definite and
     we have the stable convergence in law with respect to $\sigma(L_s, s \leq 1)$  \begin{equation}
        u_n^{-1} (\hat{\theta}_n - \theta_0) \xrightarrow[n \to \infty]{\mathcal{L}-s} I_{\overline{v}}(\theta_0)^{-1/2} \mathcal{N},
\end{equation}
where $\mathcal{N}$ is a standard Gaussian variable independent of $I_{\overline{v}}(\theta_0)$ and  $I_{\overline{v}}(\theta_0)$ is defined by 
\[I_{\overline{v}}(\theta_0) = \begin{pmatrix}
    Id_2 & 0 \\
0    &\overline{v}^T 
\end{pmatrix}  I(\theta_0) 
\begin{pmatrix}
    Id_2 & 0 \\
0    &\overline{v} 
\end{pmatrix} \; \text{ with } \; \overline{v} \; \text{ given by } \;\eqref{E:def-vr}.
\]
\end{thm}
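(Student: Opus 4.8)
The plan is to view $\hat\theta_n$ as a $Z$-estimator, a root of the estimating function $G_n$, and to exploit the expansion $0 = G_n(\hat\theta_n) = G_n(\theta_0) + \nabla_\theta G_n(\bar\theta_n)(\hat\theta_n-\theta_0)$ after rescaling by $u_n$. Everything then rests on three ingredients: (i) the stable convergence of the normalised score $u_n^T G_n(\theta_0)$ toward a mixed Gaussian variable with conditional covariance $I_{\overline{v}}(\theta_0)$; (ii) the convergence in probability of the normalised Jacobian $u_n^T\nabla_\theta G_n(\theta)\,u_n$ to $I_{\overline{v}}(\theta_0)$, uniformly for $\theta$ in a neighbourhood of $\theta_0$ shrinking at the $u_n$-rate; and (iii) the positive definiteness of $I(\theta_0)$. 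Granting these, set $\phi_n(u) = u_n^T G_n(\theta_0 + u_n u)$; since $\nabla_u\phi_n(u) = u_n^T\nabla_\theta G_n(\theta_0+u_nu)u_n$, the fundamental theorem of calculus together with (ii) gives $\phi_n(u) = \phi_n(0) + I_{\overline{v}}(\theta_0)\,u + o_P(1)$ uniformly on any ball $\{|u|\le K\}$, and a Brouwer/degree argument then yields, with probability tending to one, a zero $\hat u_n = -I_{\overline{v}}(\theta_0)^{-1}\phi_n(0) + o_P(1)$. The estimator $\hat\theta_n = \theta_0 + u_n\hat u_n$ is consistent (since $u_n\to 0$ and $\hat u_n = O_P(1)$) and $u_n^{-1}(\hat\theta_n-\theta_0) = \hat u_n$ converges stably to $-I_{\overline{v}}(\theta_0)^{-1}I_{\overline{v}}(\theta_0)^{1/2}\mathcal N = I_{\overline{v}}(\theta_0)^{-1/2}\mathcal N$, using the symmetry of $\mathcal N$.

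For (i) I would first record, from \eqref{E:Gn3}--\eqref{E:Gn4} and the splitting $\ln(n/X_{\frac{i-1}{n}}) = \ln n - \ln X_{\frac{i-1}{n}}$, the identity $(G_n^3(\theta_0),G_n^4(\theta_0))^T = r_n(\theta_0)^T(S_n^1,S_n^2)^T$, where $S_n^1 = \sum_i k_{\alpha_0}(z^n_i(\theta_0))$ and $S_n^2 = -\sum_i(\alpha_0^{-2}\ln X_{\frac{i-1}{n}}\,k_{\alpha_0}(z^n_i(\theta_0)) + f_{\alpha_0}(z^n_i(\theta_0)))$. Together with \eqref{E:Gn1}--\eqref{E:Gn2}, the block form \eqref{eq:un} of $u_n$, and the convergence $r_n(\theta_0)v_n\to\overline{v}$ of \eqref{E:def-vr}, this rewrites $u_n^T G_n(\theta_0) = \mathrm{diag}(Id_2,\overline{v}^T)\,\frac{1}{\sqrt n}\sum_{i=1}^n F(X_{\frac{i-1}{n}})H(z^n_i(\theta_0)) + o_P(1)$, with $H = (h_{\alpha_0},k_{\alpha_0},f_{\alpha_0})^T$ and an explicit $4\times 3$ weight matrix $F$ (e.g. its first row is $(\delta_0^{-1}x^{-1/\alpha_0},0,0)$). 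The centering hypotheses of Theorem \ref{Th : TCL} are exactly \eqref{esp func = 0}, so it applies and gives the stable convergence; computing $\int_0^1 F(X_s)\Sigma' F(X_s)^T ds$ with $\Sigma'_{kj} = \E(h_kh_j(L_1))$ and matching against the blocks of \eqref{E:Info} identifies the conditional covariance as $\mathrm{diag}(Id_2,\overline{v}^T) I(\theta_0)\,\mathrm{diag}(Id_2,\overline{v}) = I_{\overline{v}}(\theta_0)$.

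For (ii) I would differentiate \eqref{E:Gn1}--\eqref{E:Gn4} once more and apply the uniform law of large numbers, Theorem \ref{Th : LFGN}, to each entry of $u_n^T\nabla_\theta G_n(\theta)u_n$. After the $u_n$-rescaling every entry takes the form $\frac{1}{n}\sum_i \tilde f(X_{\frac{i-1}{n}},\delta,\alpha)\,g(z^n_i(\theta),\alpha)$ with $\tilde f$ satisfying \eqref{eq : conditions f} and $g\in\{h'_\alpha,k'_\alpha,f'_\alpha,\partial_\alpha h_\alpha,\partial_\alpha k_\alpha,\partial_\alpha f_\alpha\}$ satisfying Assumption \ref{A:conditions h}; Theorem \ref{Th : LFGN} then produces the limit $\int_0^1\tilde f(X_s,\delta_0,\alpha_0)ds\,\E(g(L_1,\alpha_0))$, and the connection formulas \eqref{esp func} (e.g. $\E(h'_{\alpha_0}) = -\E(h_{\alpha_0}^2)$) convert these expectations into precisely the entries of \eqref{E:Info}; the same identity $r_n(\theta_0)v_n\to\overline{v}$ turns the $(\delta,\alpha)$-block into $\overline{v}^T(\cdot)\overline{v}$ and yields the limit $I_{\overline{v}}(\theta_0)$. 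Two points require care. First, $\partial_\alpha z^n_i$ carries $\ln(n/X_{\frac{i-1}{n}})$ and $v_n$ grows like $\ln n$, so the $\alpha$-block accumulates factors up to $(\ln n)^2$; this is exactly why Theorem \ref{Th : LFGN} is stated with the reinforced rate $(\ln n)^q$. Second, the Taylor point $\bar\theta_n$ lies on the segment $[\theta_0,\hat\theta_n]$, so I need the convergence uniformly over $\theta_0 + u_n\{|u|\le K\}$; since the $(\delta,\alpha)$-displacement is of order $\ln n/\sqrt n$, the sequence $w_n$ in \eqref{Voisinage Wn} may be chosen with $\ln n/\sqrt n\lesssim w_n$ and $(\ln n)^q w_n\to 0$, so that this set lies in $A\times W_n^{(\eta)}$ and Theorem \ref{Th : LFGN} delivers the required uniformity.

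Finally, for (iii), writing $\xi^T I(\theta_0)\xi = \int_0^1 (F(X_s)^T\xi)^T\Sigma'(F(X_s)^T\xi)\,ds$ for $\xi = (\xi_1,\xi_2,\xi_3,\xi_4)\in\R^4$ reduces positive definiteness to two facts. The first is that $\Sigma' = \mathrm{Cov}\big((h_{\alpha_0},k_{\alpha_0},f_{\alpha_0})(L_1)\big)$ is positive definite, i.e. $h_{\alpha_0},k_{\alpha_0},f_{\alpha_0}$ are linearly independent in $L^2(\varphi_{\alpha_0})$; I would obtain this from their distinct two-sided asymptotics \eqref{E : equivalents}. The second is that $F(X_s)^T\xi = 0$ for almost every $s$ forces $\xi = 0$: the first coordinate reads $(\xi_1-\xi_2 X_s)/(\delta_0 X_s^{1/\alpha_0}) = 0$, so $\xi_1 = \xi_2 = 0$ by non-constancy of $s\mapsto X_s$; the third coordinate is $-\xi_4$, giving $\xi_4 = 0$; and the second then gives $\xi_3 = 0$. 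Since $\overline{v}$ is invertible with $\det\overline{v}>0$, $I_{\overline{v}}(\theta_0)$ inherits positive definiteness. I expect the main obstacle to be ingredient (ii): controlling the Hessian entries in the $\alpha$-direction uniformly over the shrinking neighbourhood, where the logarithmic factors coming from $\partial_\alpha z^n_i$ and from the growth of $v_n$ compound --- this is precisely what dictates the logarithmic rate in Theorem \ref{Th : LFGN} and the definition \eqref{Voisinage Wn} of $W_n^{(\eta)}$.
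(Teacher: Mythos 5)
Your proposal follows essentially the same route as the paper: it verifies the two sufficient conditions (stable convergence of $u_n^T G_n(\theta_0)$ via Theorem \ref{Th : TCL} with the $r_n(\theta_0)v_n\to\overline{v}$ factorisation, and uniform convergence of $u_n^T J_n(\theta)u_n$ over $A\times W_n^{(\eta)}$ via Theorem \ref{Th : LFGN} and the identities \eqref{esp func}), together with positive definiteness of $I(\theta_0)$ obtained from the same quadratic-form factorisation. The only cosmetic differences are that the paper passes from these two conditions to the conclusion by citing S{\o}rensen's general theory rather than a direct Brouwer-type argument, and that you spell out the linear-independence step behind positive definiteness which the paper leaves implicit.
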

\begin{rem}
 Let us discuss the optimality of the previous estimation procedure. The  Local Asymptotic Normality (LAN) property or Local Asymptotic Mixed Normality (LAMN) property allows to identify the optimal rate of convergence and the minimal asymptotic variance in estimating a parameter $\theta$.
The LAN property  has been established  in Brouste and Masuda \cite{EfficientEstimatorMasuda} for the joint estimation of $(a, \delta, \alpha)$ based on high-frequency observations of the process  
$$
X_t = a t + \delta S_t^{\alpha},
$$ 
where $(S_t^{\alpha})$ is  a symmetric $\alpha$-stable L\'evy process. Extending their result to the non-symmetric L\'evy process $(L_t^{\alpha})$, one can show that the LAN property holds in the non-symmetric case with rate $u_n=\text{diag}(\frac{1}{n^{1/ \alpha - 1/2}}, \frac{1}{\sqrt{n}} r_n^{-1}(\theta))$ and information given by
$$
 I(a, \delta, \alpha)=\begin{pmatrix}
\frac{1}{\delta^2} \E h_{\alpha}^2(L_1^{\alpha}) & \frac{1}{\delta} \E h_{\alpha} k_{\alpha}(L_1^{\alpha}) & -\frac{1}{\delta} \E f_{\alpha} h_{\alpha}(L_1^{\alpha}) \\
 \frac{1}{\delta} \E h_{\alpha} k_{\alpha}(L_1^{\alpha})  & \E k^2_{\alpha} (L_1^{\alpha}) & - \E f_{\alpha} k_{\alpha}(L_1^{\alpha}) \\
 -\frac{1}{\delta} \E f_{\alpha} h_{\alpha}(L_1^{\alpha}) & - \E f_{\alpha} k_{\alpha}(L_1^{\alpha}) & \E f^2_{\alpha}(L_1^{\alpha})
 \end{pmatrix}.
$$
For the pure-jump CIR process, the optimality of estimation procedure is still an open problem (the LAMN property has not been yet established and only partial results exist for pure-jump equations, see for example \cite{CGN}) but we conjecture that our estimator is  rate optimal and probably efficient.
\end{rem}
The previous result states the existence of a consistent and rate optimal estimator but does not ensure the existence of a unique root to the equation $G_n(\theta)=0$, there might be other sequences solution to the estimating equation that are not consistent. However, global uniqueness can be obtained in the drift estimation if we assume that $\delta_0$ and $\alpha_0$ are known or are consistently estimated by $\tilde{\delta}_n$ and $\tilde{\alpha}_n$. We focus now on the estimation of the drift parameters $(a,b)$ and consider the estimating function restricted to the drift
\begin{equation} \label{E:Gndrift}
G_n^{(d)}(a,b) = - \nabla_{(a,b)}  L_n (a,b,\tilde{\delta}_n, \tilde{\alpha}_n) = - (\partial_a L_n (a,b,\tilde{\delta}_n, \tilde{\alpha}_n),\partial_b L_n (a,b,\tilde{\delta}_n, \tilde{\alpha}_n))^T.
\end{equation}
In that case, we can prove global uniqueness.
\begin{thm}\label{Th-global}
We assume that $(a_0, b_0)$  belongs to the interior of $A$, a compact subset of $ (0,  + \infty) \times \R$.
    Let $G_n^{(d)}$ be defined by \eqref{E:Gndrift} 
  with $\frac{\sqrt{n}}{\ln n}(\tilde{\delta}_n-\delta_0, \tilde{\alpha}_n- \alpha_0)$  tight.
Then any sequence $(\hat{a}_n, \hat{b}_n)$ that solves $G_n^{(d)}(a,b)=0$  converges in probability to $(a_0,b_0)$. 
Such a sequence exists and is unique in the sense that if  $(\hat{a}'_n, \hat{b}'_n)$ is another sequence that solves $G_n^{(d)}(a,b)=0$ then $\PP((\hat{a}'_n, \hat{b}'_n) \neq (\hat{a}_n, \hat{b}_n))$ goes to zero as $n$ goes to infinity.  Moreover $\frac{n^{1/ \alpha_0 - 1/2}}{(\ln n)^{2}} \begin{pmatrix}
            \hat{a}_n - a_0\\
            \hat{b}_n - b_0
        \end{pmatrix}$ is tight.
\end{thm}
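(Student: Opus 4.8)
The plan is to exploit the affine dependence of $z^n_i(\theta)$ on $(a,b)$ together with the uniform law of large numbers of Theorem~\ref{Th : LFGN}, in order to show that, with probability tending to one, the map $(a,b)\mapsto G_n^{(d)}(a,b)$ is a strictly monotone (hence globally injective) vector field on $A$ whose Jacobian is positive definite of order $n^{2/\alpha_0-1}$; existence, uniqueness, consistency and the rate will then all follow from a monotone-map argument. A preliminary observation is that the tightness of $\frac{\sqrt n}{\ln n}(\tilde\delta_n-\delta_0,\tilde\alpha_n-\alpha_0)$ forces $(\tilde\delta_n,\tilde\alpha_n)\in W_n^{(\eta)}$ (see \eqref{Voisinage Wn}) with probability tending to one for $\eta$ large, because $\ln n/\sqrt n$ decays faster than $(\ln n)^{-q}$ for every $q$; Theorem~\ref{Th : LFGN} therefore applies with $(\delta,\alpha)=(\tilde\delta_n,\tilde\alpha_n)$, uniformly in $(a,b)\in A$.

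Differentiating \eqref{E:Gn1}--\eqref{E:Gn2} and using $\partial_a z^n_i,\partial_b z^n_i$, I would obtain the Jacobian
\[
J_n(a,b)=-\frac{n^{2/\alpha-2}}{\delta^2}\sum_{i=1}^n\frac{h'_\alpha(z^n_i(\theta))}{X_{\frac{i-1}{n}}^{2/\alpha}}\begin{pmatrix}1 & -X_{\frac{i-1}{n}}\\ -X_{\frac{i-1}{n}} & X_{\frac{i-1}{n}}^2\end{pmatrix},\qquad (\delta,\alpha)=(\tilde\delta_n,\tilde\alpha_n),
\]
that is, $n^{2/\alpha_0-1}$ times a weighted sum of rank-one positive semidefinite matrices with weights $-h'_\alpha(z^n_i)$. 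The crucial point is that the drift shift $z^n_i(a,b,\tilde\delta_n,\tilde\alpha_n)-z^n_i(a_0,b_0,\tilde\delta_n,\tilde\alpha_n)$ is of order $n^{1/\alpha-1}\to 0$, uniformly over $(a,b)\in A$; consequently, even though $h'_\alpha$ is not sign-definite, Theorem~\ref{Th : LFGN} applied to $h'_\alpha$ (a derivative of $h_\alpha$, hence satisfying Assumption~\ref{A:conditions h}) and to the weights $x\mapsto x^{k-2/\alpha}$, $k=0,1,2$ (which satisfy \eqref{eq : conditions f}) gives, uniformly in $(a,b)\in A$, $\frac1n\sum_i X_{\frac{i-1}{n}}^{k-2/\alpha}h'_\alpha(z^n_i)\to \E(h'_{\alpha_0}(L_1))\int_0^1 X_s^{k-2/\alpha_0}\,ds$. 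Since $\E(h'_{\alpha_0}(L_1))=-\E(h^2_{\alpha_0}(L_1))<0$ by \eqref{esp func}, the normalized Jacobian $n^{-(2/\alpha_0-1)}J_n(a,b)$ converges uniformly on $A$ to $I^{11}(\theta_0)$ of \eqref{E:Info}, which is positive definite (its integrand is $X_s^{-2/\alpha_0}$ times the Gram matrix of $(1,-X_s)$, a.s.\ nondegenerate since $X$ is not constant). This uniform positive-definiteness over all of $A$ (which I may take convex), and not merely near $\theta_0$, is what upgrades local to global uniqueness: $G_n^{(d)}$ is strictly monotone on $A$, so it has at most one zero there.

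Next I would control the ``score at the truth''. Expanding $G_n^{(d)}(a_0,b_0)$ in $(\delta,\alpha)$ about $(\delta_0,\alpha_0)$, the base term $G_n^{(d)}(\theta_0)$ is $O_{\PP}(n^{1/\alpha_0-1/2})$ by the central limit Theorem~\ref{Th : TCL} (applied with $F(x)=1/(\delta_0 x^{1/\alpha_0})$, resp.\ $x^{1-1/\alpha_0}/\delta_0$, and $H=h_{\alpha_0}$, $\E h_{\alpha_0}(L_1)=0$). The plug-in correction is governed by the $\delta$- and $\alpha$-derivatives, whose leading behaviour I would read off again from Theorem~\ref{Th : LFGN} (uniformly over the intermediate points, which remain in $W_n^{(\eta)}$). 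The $\delta$-derivative produces $k'_\alpha=h_\alpha+x h'_\alpha$ with $\E k'_{\alpha_0}(L_1)=-\E(h_{\alpha_0}k_{\alpha_0}(L_1))\ne 0$, hence $\partial_\delta G_n^{(d)}=O_{\PP}(n^{1/\alpha_0})$; the $\alpha$-derivative additionally brings a factor $\ln n$ (from $\partial_\alpha n^{1/\alpha}$ and from $\partial_\alpha z^n_i\propto \ln(n/X_{\frac{i-1}{n}})\,z^n_i$) multiplying a nonzero limit ($\E(L_1 h'_{\alpha_0}(L_1))=-\E(h_{\alpha_0}k_{\alpha_0}(L_1))$ by \eqref{esp func}), so $\partial_\alpha G_n^{(d)}=O_{\PP}(n^{1/\alpha_0}\ln n)$. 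Multiplying by $(\tilde\delta_n-\delta_0,\tilde\alpha_n-\alpha_0)=O_{\PP}(\ln n/\sqrt n)$, the $\alpha$-term dominates and yields $\norme{G_n^{(d)}(a_0,b_0)}=O_{\PP}\!\big(n^{1/\alpha_0-1/2}(\ln n)^2\big)$, the second-order remainders being smaller by a factor $n^{-1/2}(\ln n)^2$.

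Finally I would combine the two estimates through a monotone-map/Brouwer argument. Writing $G_n^{(d)}(a,b)=G_n^{(d)}(a_0,b_0)+\big(\int_0^1 J_n\big((a_0,b_0)+t((a,b)-(a_0,b_0))\big)\,dt\big)((a,b)-(a_0,b_0))$ and using $\lambda_{\min}(J_n)\gtrsim n^{2/\alpha_0-1}$ uniformly on $A$, the inner product $\langle G_n^{(d)}(a,b),(a,b)-(a_0,b_0)\rangle$ is strictly positive on the sphere of radius $\rho_n=2\norme{G_n^{(d)}(a_0,b_0)}/\lambda_{\min}(J_n)=O_{\PP}\!\big((\ln n)^2/n^{1/\alpha_0-1/2}\big)$, which forces a zero $(\hat a_n,\hat b_n)$ inside the corresponding ball. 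Since $\rho_n\to 0$ this ball lies in $A$ and surrounds $(a_0,b_0)$ with probability tending to one, giving existence, the consistency $(\hat a_n,\hat b_n)\to(a_0,b_0)$, and the tightness of $\frac{n^{1/\alpha_0-1/2}}{(\ln n)^2}(\hat a_n-a_0,\hat b_n-b_0)$; uniqueness on $A$ then follows from the injectivity of the strictly monotone map $G_n^{(d)}$. I expect the main obstacle to be the sharp plug-in analysis of the third step: isolating the exact power $(\ln n)^2$ requires carefully expanded score derivatives controlled through the moment bounds of Propositions~\ref{P:moments}--\ref{P:intstoch} and the tail equivalents \eqref{E : equivalents}, while checking throughout that the drift shift stays negligible so that the Jacobian remains uniformly positive definite on the whole of $A$.
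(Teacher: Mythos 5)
Your proposal is correct and follows essentially the same route as the paper: the same uniform convergence of the normalized $(a,b)$-Jacobian to the positive definite $I^{11}(\theta_0)$ via Theorem~\ref{Th : LFGN}, the same Taylor expansion of the score at $(a_0,b_0)$ in $(\tilde{\delta}_n,\tilde{\alpha}_n)$ with the orders $n^{1/\alpha_0}$ and $n^{1/\alpha_0}\ln n$ for the $\delta$- and $\alpha$-derivatives producing the $(\ln n)^2$ loss, and Theorem~\ref{Th : TCL} for the leading term. The only difference is the final packaging: the paper identifies the affine limit criterion $\overline{G}^{(d)}$ and invokes Theorem 2.7.a) of Jacod and S{\o}rensen \cite{JacodSorensenUniforme}, whereas you derive existence and global uniqueness self-containedly from strict monotonicity of the gradient field on (the convex hull of) $A$ together with a Brouwer-type argument on a shrinking sphere; both are valid here precisely because the limiting Jacobian does not depend on $(a,b)$.
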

In the next section, we give preliminary estimators  $(\tilde{\delta}_n, \tilde{\alpha}_n)$ satisfying
the assumptions of Theorem \ref{Th-global}.
Obviously, if $\delta_0$ and $\alpha_0$ are known then $(\hat{a}_n, \hat{b}_n)$ has the asymptotic distribution of Theorem \ref{Th-local}
\begin{equation*}
        n^{1/ \alpha_0 - 1/2} \begin{pmatrix}
            \hat{a}_n - a_0\\
            \hat{b}_n - b_0
        \end{pmatrix} \xrightarrow[n \to \infty]{\mathcal{L}-s} I^{11}(\theta_0)^{-1/2} \mathcal{N} \; \text{with}  \; I^{11}(\theta_0) \; \text{defined in} \; \eqref{E:Info}.
\end{equation*}

\begin{rem}
   We have not been able to prove a global result in estimating $\theta$ due to the non-symmetry of the stable process.
    \end{rem}

\subsection{Preliminary estimators and one-step improvement} \label{S:onestep}

We now propose  some preliminary estimators of  the jump activity $\alpha_0$ and the scaling parameter $\delta_0$. We will use  well known power variation methods  studied for stochastic processes with jumps in Barndorff-Nielsen et al. \cite{IntroPowerVarSauts}. We also refer to Woerner  \cite{EstimationPowerVarWoerner}, Todorov and Tauchen \cite{EstimationPowerVariationTodorov}, Todorov  \cite{Todorov}.

Following Todorov \cite{Todorov}, we define the p-th order power-variation of the second order differences of a pure-jump semimartingale X by
\[V_n^1(p,X) = \sum_{i=2}^n |\Delta_i^n X - \Delta_{i-1}^n X|^p \quad \text{where } \Delta_i^n X = X_{\frac{i}{n}} - X_{\frac{i-1}{n}}, \]   
    \[V_n^2(p,X) = \sum_{i=4}^n |\Delta_i^n X - \Delta_{i-1}^n X + \Delta_{i-2}^n X - \Delta_{i-3}^n X|^p, \]
and consider the estimator of the jump activity  
 \begin{equation}\label{Estimateur alpha}
        \tilde{\alpha}_n(p) = \frac{p \log 2}{  \log (V_n^2(p, X) / V_n^1(p, X))} {\bf 1}_{V_n^1(p, X) \neq V_n^2(p, X)}.
 \end{equation}
From Corollary 1 in \cite{Todorov}, we know that  $\tilde{\alpha}_n(p)$ is consistent if $p < \alpha_0$ and if $\frac{\alpha_0-1}{2} <p < \frac{\alpha_0}{2}$,  $\sqrt{n}(\tilde{\alpha}_n(p) - \alpha_0)$ stably converges in law.
Since we have $\alpha_0 > 1$,  we can use this result with $p=1/2$ to estimate $\alpha_0$.

Next we estimate $\delta_0$ by plugging $\tilde{\alpha}_{n}(\frac{1}{2})$ in the normalized $1/2$-power variation of second order differences of the process $X$.  Let  $\overline{L}_1^{\alpha}$ be an independent  copy of  $L_1^{\alpha}$ and set
\begin{equation} \label{E:moment-stable}
m_p(\alpha) = \E |L^{\alpha}_1 - \overline{L}^\alpha_1 |^{p}.
\end{equation}
Using \eqref{E:char}, we check that $\E(e^{iu(L^\alpha_1 - \overline{L}^\alpha_1)}) = e^{-2|u|^{\alpha}}$. So $L^\alpha_1 - \overline{L}^\alpha_1$ has a symmetric stable distribution and from Masuda \cite{Masuda} we have for $0<p< \alpha$
\begin{equation} \label{E:mp}
m_p(\alpha)=\frac{2^{p/ \alpha} 2^p \Gamma( \frac{p+1}{2}) \Gamma(1- \frac{p}{\alpha})}{ \sqrt{\pi}\Gamma(1- \frac{p}{2})} \quad
\text{with} \; \Gamma(a)= \int_0^{\infty} x^{a-1} e^{-x} dx.
\end{equation}
We next define for $p< \alpha_0$
\begin{equation} \label{E:pre-delta-p}
   \tilde{\delta}_{n}( p) = \frac{1}{m_p(\tilde{\alpha}_{n}(p))} \frac{1}{n} \sum_{i=2}^n n^{p/  \tilde{\alpha}_{n}(p)} \frac{\left| \Delta_i^n X - \Delta_{i-1}^n X\right|^{p}}{X_{\frac{i-2}{n}}^{p/ \tilde{\alpha}_{n}(p)}}.
\end{equation}
We will prove that the estimator $\tilde{\delta}_n(1/2)$ converges in probability to  $\delta_0^{1/2}$ with rate of convergence  $ \frac{\sqrt{n}}{\ln(n)}$ and we estimate $\delta_0$ by    $[\tilde{\delta}_n(\frac{1}{2})]^2 $.
\begin{thm}\label{Estimation delta} 
Let $\tilde{\delta}_n= [\tilde{\delta}_n(\frac{1}{2})]^2$, with $\tilde{\delta}_n(p)$ defined by \eqref{E:pre-delta-p}. Then $\tilde{\delta}_n$ converges in probability to $\delta_0$ and $ \frac{\sqrt{n}}{\ln(n)} (\tilde{\delta}_n - \delta_0)$ is tight.
\end{thm}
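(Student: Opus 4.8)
The plan is to carry the whole analysis at $p=1/2$ on the intermediate quantity $\tilde\delta_n(1/2)$ defined in \eqref{E:pre-delta-p}, to prove that $\tilde\delta_n(1/2)=\delta_0^{1/2}+O_{\PP}(\ln n/\sqrt n)$, and then to pass to $\tilde\delta_n=[\tilde\delta_n(1/2)]^2$ by writing $\tilde\delta_n-\delta_0=(\tilde\delta_n(1/2)+\delta_0^{1/2})(\tilde\delta_n(1/2)-\delta_0^{1/2})$ and using $\tilde\delta_n(1/2)\to\delta_0^{1/2}$. I would split the argument into an \emph{oracle step}, in which $\tilde\alpha_n(1/2)$ is replaced by the true $\alpha_0$, and a \emph{plug-in step} quantifying the cost of this replacement. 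Recall from Todorov \cite{Todorov} that $\tilde\alpha_n(1/2)\to\alpha_0$ and that $\sqrt n(\tilde\alpha_n(1/2)-\alpha_0)$ is tight, which is what feeds the plug-in step.

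For the oracle step, set $\bar\delta_n=\frac{1}{m_{p}(\alpha_0)}\frac1n\sum_{i=2}^n n^{p/\alpha_0}\abs{\Delta_i^n X-\Delta_{i-1}^n X}^{p}/X_{(i-2)/n}^{p/\alpha_0}$ and introduce $\zeta_i^n=n^{1/\alpha_0}(\Delta_i^n L-\Delta_{i-1}^n L)$. Since $\Delta_i^n L$ and $\Delta_{i-1}^n L$ are independent and each equal in law to $n^{-1/\alpha_0}L_1^{\alpha_0}$, the scaling property gives $\zeta_i^n\stackrel{d}{=}L_1^{\alpha_0}-\overline L_1^{\alpha_0}$, hence $\E\abs{\zeta_i^n}^p=m_p(\alpha_0)$ by \eqref{E:moment-stable}. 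First I would establish the decomposition $n^{1/\alpha_0}(\Delta_i^n X-\Delta_{i-1}^n X)/X_{(i-2)/n}^{1/\alpha_0}=\delta_0\zeta_i^n+\tilde R_i^n$, where $\tilde R_i^n$ gathers the second difference of the drift, the freezing error $(X_{(i-1)/n}^{1/\alpha_0}-X_{(i-2)/n}^{1/\alpha_0})\Delta_i^n L$, and the integral remainders of Proposition \ref{P:intstoch}(iii). Using Proposition \ref{P:moments}(i) to control negative moments of $X$, Proposition \ref{P:intstoch}, and the elementary bound $\abs{x^{1/\alpha_0}-y^{1/\alpha_0}}\le \tfrac1{\alpha_0}(x\wedge y)^{1/\alpha_0-1}\abs{x-y}$, one checks that $\tilde R_i^n$ is of conditional $L^r$-order $n^{-1/\alpha_0}$ for every $r<\alpha_0$, hence negligible relative to the $O_{\PP}(1)$ main term $\delta_0\zeta_i^n$. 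A law of large numbers for the $1$-dependent array $(\abs{\zeta_i^n}^p)_i$, whose terms have finite variance because $2p=1<\alpha_0$ so that $m_{2p}(\alpha_0)<\infty$ by \eqref{E:mp}, then gives $\frac{1}{m_p(\alpha_0)}\frac1n\sum_{i=2}^n\abs{\delta_0\zeta_i^n}^p=\delta_0^{p}+O_{\PP}(1/\sqrt n)$.

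The hard part will be the passage from $\abs{\delta_0\zeta_i^n+\tilde R_i^n}^p$ to $\abs{\delta_0\zeta_i^n}^p$, i.e. showing $\frac1n\sum_{i=2}^n(\abs{\delta_0\zeta_i^n+\tilde R_i^n}^p-\abs{\delta_0\zeta_i^n}^p)=o_{\PP}(\ln n/\sqrt n)$. The crude subadditivity bound $\abs{\abs{a+b}^p-\abs{a}^p}\le\abs{b}^p$ only yields $O_{\PP}(n^{-p/\alpha_0})=O_{\PP}(n^{-1/(2\alpha_0)})$, which is larger than $\ln n/\sqrt n$ and therefore insufficient. I would instead exploit the local behaviour of $x\mapsto\abs{x}^p$, splitting each term according to whether $\abs{\tilde R_i^n}\le\tfrac12\abs{\delta_0\zeta_i^n}$ or not: on the first event one linearizes through $\abs{\abs{a+b}^p-\abs{a}^p}\le C_p\abs{a}^{p-1}\abs{b}$, and on the second, which has small probability because $\zeta_i^n$ has a bounded density, one bounds by $\abs{a}^p+\abs{b}^p$. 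Since $p-1>-1$ the variable $\abs{\zeta_i^n}^{p-1}$ is integrable, and combined with the $n^{-1/\alpha_0}$-smallness of $\tilde R_i^n$ this produces a contribution of order $n^{-1/\alpha_0}\le n^{-1/2}$. The genuine difficulty is that $\zeta_i^n$ and $\tilde R_i^n$ are built from the same increments on $[(i-2)/n,i/n]$ and are therefore not independent; I would control the resulting cross-moments by conditioning on $\mathcal F_{(i-2)/n}$ and a Hölder/truncation argument using only moments of order strictly below $\alpha_0$, which is all that Propositions \ref{P:moments}--\ref{P:intstoch} deliver, and this is where the bookkeeping is heaviest.

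Finally, for the plug-in step I would write $\tilde\delta_n(1/2)-\bar\delta_n$ as the increment of $\alpha\mapsto\frac{1}{m_p(\alpha)}\frac1n\sum_{i=2}^n n^{p/\alpha}\abs{\Delta_i^n X-\Delta_{i-1}^n X}^{p}/X_{(i-2)/n}^{p/\alpha}$ between $\alpha_0$ and $\tilde\alpha_n(1/2)$, and perform a first order Taylor expansion, justified by a uniform-in-$\alpha$ law of large numbers over a shrinking neighbourhood of $\alpha_0$ in the spirit of Theorem \ref{Th : LFGN}. Differentiating $n^{p/\alpha}$ brings a factor $p\ln n/\alpha^2$, so that this is the dominant term and $\tilde\delta_n(1/2)-\bar\delta_n=O_{\PP}(\ln n\,(\tilde\alpha_n(1/2)-\alpha_0))=O_{\PP}(\ln n/\sqrt n)$; the derivatives of $m_p(\alpha)$ and of $X_{(i-2)/n}^{-p/\alpha}$ (the latter producing $\ln X_{(i-2)/n}$, integrable by Proposition \ref{P:moments}) contribute only $O_{\PP}(1/\sqrt n)$, while the second order term is $O_{\PP}((\ln n)^2/n)=o_{\PP}(\ln n/\sqrt n)$ since $\sqrt n(\tilde\alpha_n(1/2)-\alpha_0)$ is tight. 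Combining the three steps gives $\tilde\delta_n(1/2)=\delta_0^{1/2}+O_{\PP}(\ln n/\sqrt n)$, and squaring yields both the consistency $\tilde\delta_n\to\delta_0$ and the tightness of $\frac{\sqrt n}{\ln n}(\tilde\delta_n-\delta_0)$.
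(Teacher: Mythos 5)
Your proposal follows the same route as the paper: the factorization $\tilde{\delta}_n-\delta_0=(\tilde{\delta}_n(\tfrac12)+\delta_0^{1/2})(\tilde{\delta}_n(\tfrac12)-\delta_0^{1/2})$, the oracle/plug-in split (the paper packages the plug-in step as a multiplicative factorization into $(1+\epsilon^1)(1+\epsilon_n^2)\cdots$ rather than a Taylor expansion, which is only cosmetically different), the decomposition of the normalized second difference into $\delta_0\zeta_i^n+\tilde{R}_i^n$ with $\E|\tilde{R}_i^n|^p\leq C_p n^{-p/\alpha_0}$ for $p<\alpha_0$, and the appeal to Todorov for the main term. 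So the architecture is right; the problem is in the step you yourself flag as the hard one, where your sketch asserts the conclusion rather than proving it.

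Concretely: the claim that the comparison of $\abs{\delta_0\zeta_i^n+\tilde{R}_i^n}^{1/2}$ with $\abs{\delta_0\zeta_i^n}^{1/2}$ contributes $O(n^{-1/\alpha_0})$ does not follow from the split you describe. On the good event $\{\abs{\tilde{R}_i^n}\leq\tfrac12\abs{\delta_0\zeta_i^n}\}$ the linearized bound $\abs{\zeta_i^n}^{-1/2}\abs{\tilde{R}_i^n}$ cannot be integrated by a direct H\"older argument, because $\E\abs{\zeta_i^n}^{-p'/2}<\infty$ forces $p'<2$, hence $p>2>\alpha_0$, which is incompatible with $\E\abs{\tilde{R}_i^n}^p<\infty$; and the naive use of the event (replacing $\abs{\zeta_i^n}^{-1/2}$ by $C\abs{\tilde{R}_i^n}^{-1/2}$) gives only $\E\abs{\tilde{R}_i^n}^{1/2}\leq Cn^{-1/(2\alpha_0)}$, which is \emph{larger} than $\ln n/\sqrt{n}$ and sinks the theorem. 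On the bad event, the probability $\PP(\abs{\zeta_i^n}\leq\tfrac{2}{\delta_0}\abs{\tilde{R}_i^n})$ cannot be read off the bounded density of $\zeta_i^n$ alone, since $\tilde{R}_i^n$ is built from the same L\'evy increments and is not independent of $\zeta_i^n$. Closing both estimates requires an additional interpolation: the paper introduces an absolute truncation at a deterministic level $\epsilon_n$, bounds the good part by $\epsilon_n^{-1/2+1/p'-l/p'}n^{-1/\alpha_0}$ (after writing $\abs{\zeta}^{-p'/2}=\abs{\zeta}^{-1+l}\abs{\zeta}^{-p'/2+1-l}$ on $\{\abs{\zeta}>\epsilon_n\}$) and the bad part by $\epsilon_n^{1/p'}n^{-1/(2\alpha_0)}$, and then checks that the choice $\epsilon_n=n^{-1/(\alpha_0+1)}$ makes both exponents strictly exceed $1/2$ for every $\alpha_0\in(1,2)$. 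This exponent arithmetic is the actual content of the proof of \eqref{E:neg}, and it is absent from your sketch; without it (or an equivalent interpolation in the powers of $\abs{\zeta_i^n}$ and $\abs{\tilde{R}_i^n}$), the argument stalls at the insufficient rate $n^{-1/(2\alpha_0)}$.
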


%%%%%%%%%%one step
From the preliminary estimators $\tilde{\delta}_n$ and $\tilde{\alpha}_n:=\tilde{\alpha}_n(1/2)$, and using the global uniqueness result of Theorem \ref{Th-global}, we estimate $(a,b)$ with the estimating equation $G_n^{(d)}$ defined by \eqref{E:Gndrift} and denote by $(\tilde{a}_n, \tilde{b}_n)$ the resulting estimators. We obtain a preliminary estimator $\hat{\theta}_{0,n} = (\tilde{a}_n, \tilde{b}_n, \tilde{\delta}_n, \tilde{\alpha}_n)$ which is consistent and from Theorems \ref{Th-global} and \ref{Estimation delta}, we know that $\ln(n)^{-2} u_n^{-1} ( \hat{\theta}_{0,n} - \theta_0)$ is tight.
We now describe a one-step method to improve the asymptotic properties of $\hat{\theta}_{0,n}$. We also refer to Masuda \cite{Masuda23} where a one-step method is proposed for the estimation of Ornstein-Ulhenbeck type processes. 

We define  $\hat{\theta}_{1,n}$  by
\begin{equation}\label{One step improvement estimator}
    \hat{\theta}_{1,n} = \hat{\theta}_{0,n} -J_n(\hat{\theta}_{0,n})^{-1} G_n(\hat{\theta}_{0,n}),
\end{equation}
where $J_n(\theta)= \nabla_{\theta}G_n(\theta)$. It is shown in Section \ref{Ss:Thlocal}  that the convergence in probability holds
\begin{equation} \label{E:unifJn}
\sup_{(a,b) \in A, \; (\delta, \alpha) \in W_n^{(\eta)}} || u_n^T J_n( \theta) u_n -I_{\overline{v}}(\theta_0) || \xrightarrow{} 0,
\end{equation}
where $I_{\overline{v}}(\theta_0)$  is  positive definite,  $W_n^{(\eta)}$ is defined by \eqref{Voisinage Wn} and $A$ is a compact subset of $(0, +\infty) \times \R$. We deduce then that $u_n^T J_n(\hat{\theta}_{0,n}) u_n$ converges to $I_{\overline{v}}(\theta_0)$. Let us denote by $D_n$ the set where   $u_n^TJ_n(\hat{\theta}_{0,n})u_n$ is invertible. We have   $\PP(D_n) \rightarrow 1$ and $\hat{\theta}_{1,n}$ is well defined on this set.
Considering $\hat{\theta}_{n}$ the estimator defined in Theorem \ref{Th-local}, we will prove that  $u_n^{-1} (\hat{\theta}_{1,n} -  \hat{\theta}_{n}) \xrightarrow{} 0$ in probability. 

From now on, we assume that $u_n$ is given by \eqref{eq:un} with $v_n= r_n^{-1}(\theta_0)$ (defined  in \eqref{E:def-vr}) such that $\overline{v}=Id$.
We have $G_n(\hat{\theta}_{n})=0$ on $\overline{D}_n$ with 
$\PP(\overline{D}_n) \rightarrow 1$ and so from Taylor's formula, we have on  $\overline{D}_n \cap D_n$
\[\hat{\theta}_{1,n} = \hat{\theta}_{0,n} - J_n(\hat{\theta}_{0,n})^{-1} \int_0^1 J_n(\hat{\theta}_{n} + t(\hat{\theta}_{0,n} - \hat{\theta}_{n}))dt \ (\hat{\theta}_{0,n} - \hat{\theta}_{n}). \]
Hence
\begin{align*}
  \hat{\theta}_{1,n} -  \hat{\theta}_{n}  =  J_n(\hat{\theta}_{0,n})^{-1} \left( J_n(\hat{\theta}_{0,n}) - \int_0^1 J_n(\hat{\theta}_{n} + t(\hat{\theta}_{0,n} - \hat{\theta}_{n}))dt \right) (\hat{\theta}_{0,n} - \hat{\theta}_{n}), 
\end{align*}
and we deduce
\begin{align*}
    u_n^{-1} (\hat{\theta}_{1,n} -  \hat{\theta}_{n})
    =  & (u_n^T J_n(\hat{\theta}_{0,n})u_n)^{-1} 
  \left[u_n^T J_n(\hat{\theta}_{0,n}) u_n  \right. \\
 & \left.  -  u_n^T \int_0^1 J_n(\hat{\theta}_{n} + t(\hat{\theta}_{0,n} - \hat{\theta}_{n}))dt \ u_n \right] 
    u_n^{-1}(\hat{\theta}_{0,n} - \hat{\theta}_{n}).
\end{align*}
Using the tightness of $ (\ln n)^{-2} u_n^{-1} (\hat{\theta}_{0,n} - \theta_0)$, we may assume that $\hat{\theta}_{0,n} \in A \times W_n^{(\eta)}$ (as well as $\hat{\theta}_{n} + t(\hat{\theta}_{0,n} - \hat{\theta}_{n})$ for $t \in [0,1]$), so we obtain from \eqref{E:unifJn} and Remark \ref{R:Jn} that $\forall q>0$
$$
(\ln n)^q \left\Vert u_n^T J_n(\hat{\theta}_{0,n}) u_n  -  u_n^T \int_0^1 J_n(\hat{\theta}_{n} + t(\hat{\theta}_{0,n} - \hat{\theta}_{n}))dt \ u_n \right\Vert \rightarrow 0,
$$
and it yields the convergence in probability $u_n^{-1} (\hat{\theta}_{1,n} -  \hat{\theta}_{n}) \xrightarrow{} 0$.
Recalling that $\hat{\theta}_{n}$  satisfies
$$
 u_n^{-1} (\hat{\theta}_{n} - \theta_0) \xrightarrow[n \to \infty]{\mathcal{L}-s} I_{\overline{v}}(\theta_0)^{-1/2} \mathcal{N},
$$
we obtain the following result.

\begin{coro}
    The one-step estimator $\hat{\theta}_{1,n}$ converges in probability to $\theta_0$ and we have the stable convergence in law
\begin{equation*}
    u_n^{-1} (\hat{\theta}_{1,n} -  \theta_0) \xrightarrow[n \to \infty]{\mathcal{L}-s} I(\theta_0)^{-1/2} \mathcal{N},
\end{equation*}
with $I(\theta_0)$ defined by \eqref{E:Info}, and $u_n$ given by \eqref{eq:un} with $v_n=r_n^{-1}(\theta_0)$.
\end{coro}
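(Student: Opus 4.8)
The plan is to exploit the decomposition already set up in the text preceding the statement, namely
$$u_n^{-1}(\hat{\theta}_{1,n} - \theta_0) = u_n^{-1}(\hat{\theta}_{1,n} - \hat{\theta}_{n}) + u_n^{-1}(\hat{\theta}_{n} - \theta_0),$$
and to treat the two summands separately. The first summand has already been shown to vanish in probability: the Taylor expansion of $G_n$ around $\hat{\theta}_{n}$, combined with the uniform convergence \eqref{E:unifJn}, the tightness of $(\ln n)^{-2} u_n^{-1}(\hat{\theta}_{0,n} - \theta_0)$, and Remark \ref{R:Jn}, yields $u_n^{-1}(\hat{\theta}_{1,n} - \hat{\theta}_{n}) \xrightarrow{\PP} 0$. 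The second summand is governed by Theorem \ref{Th-local}, which gives $u_n^{-1}(\hat{\theta}_{n} - \theta_0) \xrightarrow{\mathcal{L}-s} I_{\overline{v}}(\theta_0)^{-1/2}\mathcal{N}$.

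Next I would identify the limiting information matrix. Since we now fix $v_n = r_n^{-1}(\theta_0)$, we have $r_n(\theta_0) v_n = Id_2$, so that $\overline{v} = Id_2$ in \eqref{E:def-vr}. Plugging $\overline{v} = Id_2$ into the definition of $I_{\overline{v}}(\theta_0)$ collapses the two conjugating block matrices to the identity, whence $I_{\overline{v}}(\theta_0) = I(\theta_0)$ with $I(\theta_0)$ given by \eqref{E:Info}. In particular the positive definiteness of $I(\theta_0)$ established in Theorem \ref{Th-local} guarantees that $I(\theta_0)^{-1/2}$ is well defined.

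The remaining step is to combine the two convergences. Because stable convergence in law with respect to $\sigma(L_s, s \leq 1)$ is preserved under the addition of a term tending to zero in probability (the stable-convergence analogue of Slutsky's lemma; see Jacod and Protter \cite{TriangularArray}), the sum converges stably in law to the same limit, giving
$$u_n^{-1}(\hat{\theta}_{1,n} - \theta_0) \xrightarrow[n \to \infty]{\mathcal{L}-s} I(\theta_0)^{-1/2}\mathcal{N}.$$
Consistency then follows for free: the stable convergence just obtained implies tightness of $u_n^{-1}(\hat{\theta}_{1,n} - \theta_0)$, and since every entry of $u_n$ tends to $0$ (because $1/\alpha_0 - 1/2 > 0$, and the entries of $n^{-1/2} v_n$ all vanish even in the presence of the $\ln n$ factor coming from $r_n^{-1}(\theta_0)$), we conclude $\hat{\theta}_{1,n} - \theta_0 = u_n\, u_n^{-1}(\hat{\theta}_{1,n} - \theta_0) \xrightarrow{\PP} 0$.

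I do not expect a genuine obstacle here, as the substantive effort lies in \eqref{E:unifJn}, the tightness of the preliminary estimator, and Theorem \ref{Th-local}, all assumed available. The only points requiring care are the invocation of the Slutsky-type property for \emph{stable} convergence rather than ordinary convergence in law, and the bookkeeping confirming that the choice $v_n = r_n^{-1}(\theta_0)$ forces $\overline{v} = Id_2$ and hence $I_{\overline{v}}(\theta_0) = I(\theta_0)$.
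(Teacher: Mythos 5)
Your proposal is correct and follows essentially the same route as the paper: the text preceding the corollary establishes $u_n^{-1}(\hat{\theta}_{1,n}-\hat{\theta}_{n})\to 0$ in probability via the Taylor expansion, \eqref{E:unifJn}, Remark \ref{R:Jn} and the tightness of $(\ln n)^{-2}u_n^{-1}(\hat{\theta}_{0,n}-\theta_0)$, and the corollary is then obtained exactly as you do, by adding the stable limit of $u_n^{-1}(\hat{\theta}_{n}-\theta_0)$ from Theorem \ref{Th-local} with $\overline{v}=Id_2$ so that $I_{\overline{v}}(\theta_0)=I(\theta_0)$. Your additional remarks on the Slutsky-type property for stable convergence and on consistency via $u_n\to 0$ are accurate bookkeeping that the paper leaves implicit.
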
 

We conclude with some comments on the implementation of our estimation method. From \eqref{Estimateur alpha}, \eqref{E:pre-delta-p} and \eqref{E:mp} with $p=1/2$, the estimators $\tilde{\alpha}_n$ and $\tilde{\delta}_n$ are very simple to compute. Moreover, from the proof Theorem \ref{Th-global} $(\tilde{a}_n, \tilde{b}_n)$ is the unique maximum of the  approximated log-likelihood function $L_n(a,b, \tilde{\delta}_n, \tilde{\alpha}_n)$ given by \eqref{def Ln}, where the density of the non-symmetric stable distribution $\varphi_{\alpha}$ can be evaluated  from the results of Nolan \cite{Nolan97}. Finally, the score function $G_n$ and the hessian matrix $J_n$ appearing in the one-step correction can be computed using
finite differences.

\section{Proofs} \label{S:proofs}

\subsection{Proof of Proposition \ref{P:moments}}

 Only (i) requires a proof since (ii) is stated in Proposition 2.8 of \cite{LongNeg}.

We recall that $\forall t \geq 0, X_t > 0$. By Fubini and integration by parts, we can show that 
\[\E\left(\frac{1}{X_t^q}\right) = C_q \int_{\mathbb{R}_+} u^{q-1} \E(e^{-u X_t}) du.\]
The Laplace transform  of $X_t$ has the explicit expression (we refer to Li \cite{Li}, see also Section 3.1 in \cite{BranchingCIR} and Proposition 2.1. in \cite{TransformeeLaplace})
%\[ \E(e^{-u X_t}) = \exp \left(-x_0 v_t(u) - \int_0^t F(v_s(u)) ds \right), \quad \forall t \in \mathbb{R}_+\]
\begin{equation} \label{E:laplace}
\E(e^{-u X_t}) = \exp \left(-x_0 v_t(u) - \int_{v_t(u)}^u \frac{F(z)}{R(z)} dz \right),
\end{equation}
where $t \to v_t(u)$ is the unique locally bounded solution of 
\begin{equation} \label{eq:Laplace Eq Diff}
    \frac{\partial}{\partial t} v_t(u) = - R(v_t(u)), \quad v_0(u)=u.
\end{equation}
For the $\alpha$-CIR process, the branching mechanism and the immigration rate are given by
\[R(z)=\frac{\overline{\delta}^\alpha}{\alpha} z^\alpha +bz, \quad F(z)=az, \quad z \in [0, + \infty),\]
with $\overline{\delta}=\delta (\alpha/| \cos(\frac{\pi \alpha}{2}) |)^{1/ \alpha}$.
Equation \eqref{eq:Laplace Eq Diff} is a Bernoulli differential equation, which we solve using the change of variables $z_t(u)= \frac{1}{v_t(u)^{\alpha-1}}.$ We get the following expressions of $v_t(u)$ (see example 2.3 in Li \cite{Li})
\begin{align}
   \text{if} \;  b \neq 0  \quad &v_t(u) = \frac{u e^{-bt}}{ \left(1+\frac{\overline{\delta}^\alpha}{\alpha b} u^{\alpha-1} \left(1-e^{-(\alpha-1)bt} \right)\right)^{\frac{1}{\alpha-1}}}, \label{eq:vt:b!0}\\
   \text{if} \;  b=0  \quad &v_t(u) = u \left(\frac{\alpha}{\alpha + (\alpha-1) \overline{\delta}^\alpha u^{\alpha-1} t}\right)^\frac{1}{\alpha-1}. \label{eq:vt:b=0}
\end{align}
We have $v_0(u)=u$ and by equation \eqref{eq:Laplace Eq Diff} $t \to v_t(u)$ is non-increasing for $u > u_0$, where
\[ u_0= \left\{ \begin{array}{ll}
        0 & \text{if } b \geq 0, \\
        (\frac{\alpha |b|}{\overline{\delta}^\alpha})^\frac{1}{\alpha-1} & \text{if } b<0.
    \end{array} \right.\]
Hence for $u>u_0$ we have $\forall t>0, u>v_t(u) \geq 0$.

Splitting $\E(1/X_t^q)$ in two parts
\begin{align*}
    \E\left(\frac{1}{X_t^q}\right) &= C_q \int_0^{u_0+1} u^{q-1} \E(e^{-u X_t}) du + C_q \int_{u_0+1}^\infty u^{q-1} \E(e^{-u X_t}) du \\
    &\leq C_q(u_0) + C_q \int_{u_0+1}^\infty u^{q-1} \E(e^{-u X_t}) du,
\end{align*}
we deduce that to prove \eqref{eq:Moments1/X} we only have to check that 
\begin{equation} \label{E:bound}
\sup_{t\in [0,1]} \int_{u_0+1}^\infty u^{q-1} \E(e^{-u X_t}) du < + \infty.
\end{equation}
Let us fix $\eta \in \ ]0, \alpha-1[$. We have
\begin{align*} 
\sup_{t\in [0,1]} \int_{u_0+1}^\infty u^{q-1} \E(e^{-u X_t}) du  & \leq \int_{u_0+1}^\infty u^{q-1} \sup_{t\in [0,\frac{1}{u^\eta}]} \E(e^{-u X_t}) du \\
& + \int_{u_0+1}^\infty u^{q-1} \sup_{t\in [\frac{1}{u^\eta},1]} \E(e^{-u X_t}) du. 
\end{align*}
But
using that $ t \to v_t(u)$ is non-increasing for $u > u_0$, we obtain from \eqref{E:laplace} 
\begin{align*} 
    \sup_{t\in [0, \frac{1}{u^\eta}]} \E(e^{-u X_t}) &\leq \sup_{t\in [0, \frac{1}{u^\eta}]} \exp(-x_0 v_t(u) ) \leq \exp (-x_0 v_{\frac{1}{u^\eta}}(u) ),\\
    \sup_{t \in [\frac{1}{u^\eta},1]} \E(e^{-u X_t}) &\leq \sup_{t \in [\frac{1}{u^\eta},1]} \exp \left(- \int_{v_t(u)}^u \frac{F(z)}{R(z)} dz\right) \\
     & \leq \exp \left( - \left(u - v_{\frac{1}{u^\eta}}(u) \right) \frac{a}{\frac{\overline{\delta}^\alpha}{\alpha}u^{\alpha-1} +b} \right), 
\end{align*}
and then
\begin{align*}
    \sup_{t\in [0,1]} \int_{u_0+1}^\infty u^{q-1} \E(e^{-u X_t}) du & \leq \int_{u_0+1}^\infty u^{q-1} \exp (-x_0 v_{\frac{1}{u^\eta}}(u) ) du \\
    & +  \int_{u_0+1}^\infty u^{q-1} \exp \left[- (u - v_{\frac{1}{u^\eta}}(u) ) \frac{a}{\frac{\overline{\delta}^\alpha}{\alpha}u^{\alpha-1} +b} \right] du.
\end{align*}
From expressions \eqref{eq:vt:b=0} and \eqref{eq:vt:b!0}, we have
$
v_{\frac{1}{u^\eta}}(u) \underset{u \to \infty}{\sim}  C u^\frac{\eta}{\alpha-1},
$
and recalling that $0 < \frac{\eta}{\alpha-1}< 1$ and $\alpha <2$, we get \eqref{E:bound}. \\

\subsection{Proof of Proposition \ref{P:intstoch} }

(i)  The result can be deduced from Proposition 2.7 in \cite{LongNeg}, but we give here a different proof. Using the Markov property
\[ \E \left( \sup_{s\in [\frac{i-1}{n}, \frac{i}{n}]} \left|  \int_\frac{i-1}{n}^s X_{t-}^{1/ \alpha} dL^\alpha_t  \right|^{p} |\mathcal{F}_\frac{i-1}{n}\right) = \E_{x = X_{\frac{i-1}{n}}} \left( \sup_{s\in [0, \frac{1}{n}]} \left|  \int_0^s X_{t-}^{1/ \alpha} dL^\alpha_t  \right|^{p} \right). \]
Using Lemma 2.4 in \cite{Long} with $F(u) = u^p$ and $p < \alpha$
\[ \E_{x} \left( \sup_{s\in [0, \frac{1}{n}]} \left|  \int_0^s X_{t-}^{1/ \alpha} dL^\alpha_t  \right|^{p} \right) \leq C_p \E_{x} \left( \left( \int_0^\frac{1}{n} X_{t-}  dt \right)^{p/ \alpha} \right).  \]
Hence using Proposition \ref{P:moments} equation \eqref{eq:MomentsX}
\[ \E_{x} \left( \sup_{s\in [0, \frac{1}{n}]} \left|  \int_0^s X_{t-}^{1/ \alpha} dL^\alpha_t  \right|^{p} \right) \leq \frac{ C_p}{n^{p/\alpha}} \left( 1+ x^{p / \alpha} \right).\] \\
%%%%%%%
(ii) Similarly we have
\[ \E \left( \sup_{s\in [\frac{i-1}{n}, \frac{i}{n}]} \left|X_s - X_{\frac{i-1}{n}}\right|^{p}|\mathcal{F}_\frac{i-1}{n}\right) = \E_{x = X_{\frac{i-1}{n}}} \left( \sup_{s\in [0, \frac{1}{n}]} \left|X_s - x\right|^{p}\right). \]
From equation \eqref{eq:EDS} we have that
\[\forall s\in \left[0, \frac{1}{n}\right], \quad \quad  X_s - x = \int_0^s adt - \int_0^s b X_t dt + \delta \int_0^s X_{t-}^{1/ \alpha}dL^\alpha_t.\]
%Hence 
%\[\sup_{s\in [0, \frac{1}{n}]} \left|X_s - x \right|^{p} \leq C_p \left( \left(\frac{a}{n} \right)^{p} + \left(\frac{|b| \sup_{s \in [0,\frac{1}{n}]} X_s }{n} \right)^{p} + \delta^{p} \sup_{s\in [0, \frac{1}{n}]} \left| \int_0^s X_{t-}^{1/ \alpha} dL^\alpha_t \right|^{p} \right). \]
Using Proposition \ref{P:moments} equation \eqref{eq:MomentsX} and Proposition \ref{P:intstoch} (i), we get that
\[ \E_{x} \left( \sup_{s\in [0, \frac{1}{n}]} \left|X_s - x\right|^{p} \right) \leq C_p \left( \frac{1}{n^{p}} \left(1 + x^p \right) + \frac{1}{n^{p / \alpha}} \left(1 + x^{p / \alpha} \right) \right) \leq \frac{C_p}{n^{p / \alpha}} \left(1 + x^{p } \right).\] \\
%%%%%%%%
(iii) 
\[ \E\left( \left|  \int_\frac{i-1}{n}^\frac{i}{n} (X_{s-}^{1/ \alpha} - X_{\frac{i-1}{n}}^{1/ \alpha}) dL^\alpha_s  \right|^{p} |\mathcal{F}_\frac{i-1}{n}\right) = \E_{x = X_{\frac{i-1}{n}}} \left( \left|  \int_0^\frac{1}{n} (X_{s-}^{1/ \alpha} - x^{1/ \alpha}) dL^\alpha_s  \right|^{p} \right). \]
Using Lemma 2.8 in \cite{LongNeg} with $p < \alpha$
\[ \E_{x} \left( \left|  \int_0^\frac{1}{n} (X_{s-}^{1/ \alpha} - x^{1/ \alpha}) dL^\alpha_s  \right|^{p} \right) \leq C_p \E_{x} \left( \left(  \int_0^\frac{1}{n} \left|X_{s-}^{1/ \alpha} - x^{1/ \alpha}\right|^{\alpha}  ds \right)^{p/ \alpha} \right).  \]
But from Taylor's Formula, we have
\begin{equation} \label{x1/a - y1/a}
    \forall x,y \in ]0, +\infty[,  \quad | y^{1/ \alpha} - x^{1/ \alpha} |  = \frac{|y-x|}{\alpha} \int_0^1 ((1-u)x+uy)^{1/ \alpha-1} du\leq \frac{C |y-x|}{x^{1 - 1/\alpha}},
\end{equation}
and we obtain
\begin{align*}
    \E_{x} \left( \left(  \int_0^\frac{1}{n} \left|X_{s-}^{1/ \alpha} - x^{1/ \alpha}\right|^{\alpha}  ds \right)^{p/ \alpha} \right) 
    %& \leq \E_{x} \left( \left( \frac{C_p}{x^{1 - 1/ \alpha}} \int_0^\frac{1}{n} \left|X_{s-} - x \right|^{\alpha} ds \right)^{p/ \alpha} \right) \\
    & \leq \frac{C_p}{x^{p - p/ \alpha}} \frac{1}{n^{p/ \alpha}}\E_{x} \left( \sup_{s\in [0, \frac{1}{n}]} \left|X_s - x\right|^{p} \right).
\end{align*}
Hence combining with (ii) it yields
\[ \E_{x} \left( \left|  \int_0^\frac{1}{n} (X_{s-}^{1/ \alpha} - x^{1/ \alpha}) dL_s  \right|^{p} \right) \leq \frac{C_p}{n^{2p/ \alpha}} \left( 1+ \frac{1}{x^p} + x^p\right).\]

%%%%%%%%%riemann
%\begin{lem}  \label{L:BonRiemann}
%Let $f:(0, + \infty) \to \R$ be a $\mathcal{C}^1$ function such that there exists a constant $C >0$ such that for some $q>0$
%\begin{align*}
 %   \forall x\in (0, + \infty), \quad \left(|f (x)| + |f' (x)| \right) \leq C \left( 1 + x^q  + \frac{1}{x^q} \right).
%\end{align*}
%Then 
%\begin{equation}
%    \int_0^1 \left| f(X_s) \right| ds < + \infty \quad \text{a.e.}
%\end{equation}
%and we have the following convergence in probability
 %   \begin{equation} \label{eq:BonRiemann}
 %       \left|\frac{1}{n} \sum_{i=1}^n f(X_{\frac{i-1}{n}}) -  \int_0^1 f(X_s) ds \right| \to 0
 %   \end{equation}
%\end{lem}

%%%%%%%%%%%%%%% appendix
\subsection{Estimation of $z^n$} \label{Ss:zn}

We prove in this section that the rescaled increment $z_i^n(\theta)$ defined by \eqref{eq:def zn} converges to $n^{1/ \alpha_0} \Delta_i^n L$. 
We also give a bound for the conditional expectation of $|z_i^n(\theta)|^q {\bf 1}_{\{ z_i^n(\theta)<0\} }$. We recall that $W_n^{(\eta)}$ is defined by \eqref{Voisinage Wn}.
\begin{lem}  \label{L:zn-L1}
Let $0<p<\alpha_0$. 
    Then for $A \subset (0, + \infty) \times \R$ a compact set we have for some constant $q>0$
    \begin{eqnarray} \label{eq:sup zn-L1 unif}
        \E \left(\sup_{(a,b) \in A, \; (\delta, \alpha) \in W_n^{(\eta)} } \left|z^n_i(\theta) - n^{1/\alpha_0} \Delta_i^n L \right|^p  |\mathcal{F}_\frac{i-1}{n}\right)  \quad \quad \quad \quad \quad\nonumber \\
\quad \quad \quad \quad         \leq C_p \left( 1 + \frac{1}{X_{\frac{i-1}{n}}^{q}} + X_{\frac{i-1}{n}}^{q} \right) \left( \frac{n^{p/ \alpha_0}}{n^{p}} 
    + (\ln (n) w_n)^p \right),
    \end{eqnarray}
    and for the true value $\theta_0$ we have the better rate of convergence
    \begin{equation} \label{eq:zn(theta0)-L1}
        \E\left(\left|z^n_i(\theta_0) - n^{1/\alpha_0} \Delta_i^n L \right|^p | \mathcal{F}_\frac{i-1}{n} \right)  \leq \frac{C_p}{n^{p/ \alpha_0}} \left( 1 + \frac{1}{X_{\frac{i-1}{n}}^{q}} + X_{\frac{i-1}{n}}^p \right).
        \end{equation}
       \end{lem}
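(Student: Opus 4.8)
The plan is to start from the stochastic equation \eqref{eq:EDS} at the true parameter $\theta_0=(a_0,b_0,\delta_0,\alpha_0)$, writing the increment as $X_{\frac{i}{n}}-X_{\frac{i-1}{n}}=\int_{\frac{i-1}{n}}^{\frac{i}{n}}(a_0-b_0X_s)\,ds+\delta_0\int_{\frac{i-1}{n}}^{\frac{i}{n}}X_{s-}^{1/\alpha_0}\,dL_s$ and substituting this into the definition \eqref{eq:def zn} of $z_i^n(\theta)$. The key algebraic step is to split the stochastic integral as $\int_{\frac{i-1}{n}}^{\frac{i}{n}}X_{s-}^{1/\alpha_0}dL_s=\int_{\frac{i-1}{n}}^{\frac{i}{n}}(X_{s-}^{1/\alpha_0}-X_{\frac{i-1}{n}}^{1/\alpha_0})dL_s+X_{\frac{i-1}{n}}^{1/\alpha_0}\Delta_i^nL$, which isolates the leading term $X_{\frac{i-1}{n}}^{1/\alpha_0}\Delta_i^nL$. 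This gives the decomposition
\[
z_i^n(\theta)-n^{1/\alpha_0}\Delta_i^nL=R_i^n(\theta)+\big(g_i^n(\theta)-1\big)\,n^{1/\alpha_0}\Delta_i^nL,
\]
where $g_i^n(\theta)=n^{1/\alpha-1/\alpha_0}\frac{\delta_0}{\delta}X_{\frac{i-1}{n}}^{1/\alpha_0-1/\alpha}$ is $\mathcal F_{\frac{i-1}{n}}$-measurable and the remainder $R_i^n(\theta)=\frac{n^{1/\alpha}}{\delta X_{\frac{i-1}{n}}^{1/\alpha}}\big[\frac{a_0-a}{n}+\frac{b-b_0}{n}X_{\frac{i-1}{n}}-b_0\int_{\frac{i-1}{n}}^{\frac{i}{n}}(X_s-X_{\frac{i-1}{n}})ds+\delta_0\int_{\frac{i-1}{n}}^{\frac{i}{n}}(X_{s-}^{1/\alpha_0}-X_{\frac{i-1}{n}}^{1/\alpha_0})dL_s\big]$ collects the drift and discretization errors. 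At $\theta=\theta_0$ one has $g_i^n(\theta_0)=1$, so the second term disappears and only $R_i^n(\theta_0)$ survives, which is what will produce the sharper rate \eqref{eq:zn(theta0)-L1}.

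For $R_i^n(\theta)$, the drift bracket (everything except the last integral) has modulus $O(1/n)$: the coefficients $a_0-a$ and $b-b_0$ are bounded over the compact set $A$, and $\int_{\frac{i-1}{n}}^{\frac{i}{n}}(X_s-X_{\frac{i-1}{n}})ds$ is controlled by $\frac1n\sup_s|X_s-X_{\frac{i-1}{n}}|$ together with Proposition \ref{P:intstoch}(ii); the difference integral is handled directly by Proposition \ref{P:intstoch}(iii). Raising to the power $p$, conditioning, and taking the supremum over $\theta$—which only meets the $\mathcal F_{\frac{i-1}{n}}$-measurable prefactor $n^{1/\alpha}/(\delta X_{\frac{i-1}{n}}^{1/\alpha})$, since the two integrals do not depend on $\theta$—the prefactor contributes $\sup_{(\delta,\alpha)\in W_n^{(\eta)}}(n/X_{\frac{i-1}{n}})^{p/\alpha}\delta^{-p}\le C\,n^{p/\alpha_0}X_{\frac{i-1}{n}}^{-p/\alpha_0}(1+X^q+1/X^q)$, the deviation $n^{1/\alpha-1/\alpha_0}X_{\frac{i-1}{n}}^{1/\alpha_0-1/\alpha}$ being uniformly bounded on $W_n^{(\eta)}$. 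Multiplying by the $n^{-p}$-type drift bound and the $n^{-2p/\alpha_0}$-type bound of Proposition \ref{P:intstoch}(iii), and using $\alpha_0\in(1,2)$ (so that $n^{-p}\le n^{-p/\alpha_0}$ and $n^{-p/\alpha_0}\le n^{p/\alpha_0-p}$), yields $\E(\sup_\theta|R_i^n(\theta)|^p\mid\mathcal F_{\frac{i-1}{n}})\le C_p(1+1/X^q+X^q)\,n^{p/\alpha_0}/n^p$. For $\theta=\theta_0$ the prefactor is exactly $n^{1/\alpha_0}/(\delta_0X_{\frac{i-1}{n}}^{1/\alpha_0})$ and the same computation gives the bound $C_p\,n^{-p/\alpha_0}(1+1/X^q+X^p)$ of \eqref{eq:zn(theta0)-L1}.

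For the parameter-mismatch term, I would write $g_i^n(\theta)=\exp\big((1/\alpha-1/\alpha_0)(\ln n-\ln X_{\frac{i-1}{n}})+\ln(\delta_0/\delta)\big)$. On $W_n^{(\eta)}$ one has $|\alpha-\alpha_0|\le\eta w_n$ and $|\delta-\delta_0|\le\eta w_n$, so the exponent is $O\big(w_n(\ln n+|\ln X_{\frac{i-1}{n}}|)\big)$ and, since $(\ln n)w_n\to0$, it is uniformly small. The elementary inequality $|e^y-1|\le|y|e^{|y|}$ then gives $\sup_\theta|g_i^n(\theta)-1|\le C\,w_n(\ln n+|\ln X_{\frac{i-1}{n}}|)$, an $\mathcal F_{\frac{i-1}{n}}$-measurable quantity. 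Because $n^{1/\alpha_0}\Delta_i^nL\overset{d}{=}L_1^{\alpha_0}$ is independent of $\mathcal F_{\frac{i-1}{n}}$ with $\E|L_1^{\alpha_0}|^p<\infty$ for $p<\alpha_0$, the conditional expectation factorizes and, after bounding $|\ln X_{\frac{i-1}{n}}|^p\le C(1+1/X^q+X^q)$, one obtains $\E\big(\sup_\theta|g_i^n(\theta)-1|^p\,|n^{1/\alpha_0}\Delta_i^nL|^p\mid\mathcal F_{\frac{i-1}{n}}\big)\le C_p(1+1/X^q+X^q)(\ln(n)w_n)^p$. Adding the two contributions through $|x+y|^p\le C_p(|x|^p+|y|^p)$ produces \eqref{eq:sup zn-L1 unif}.

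I expect the main difficulty to be the uniform control over the shrinking neighbourhood $W_n^{(\eta)}$ of the logarithmic factors $n^{1/\alpha-1/\alpha_0}$ and $X_{\frac{i-1}{n}}^{1/\alpha_0-1/\alpha}$: these are precisely the terms generating the $\ln n$ loss, and the whole point of defining $W_n^{(\eta)}$ with $(\ln n)^qw_n\to0$ in \eqref{Voisinage Wn} is to make them contribute only at order $(\ln(n)w_n)^p$. Keeping every stray power of $X_{\frac{i-1}{n}}$ and $1/X_{\frac{i-1}{n}}$ inside the admissible weight $(1+1/X^q+X^q)$—which is legitimate thanks to the moment estimates of Proposition \ref{P:moments}—is the principal bookkeeping burden; once that is organized, both rates follow from Proposition \ref{P:intstoch} as above.
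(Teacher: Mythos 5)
Your proof is correct and follows essentially the same route as the paper: substitute the SDE at $\theta_0$ into the definition of $z_i^n(\theta)$, isolate the Euler term $X_{\frac{i-1}{n}}^{1/\alpha_0}\Delta_i^n L$, control the remainder with Proposition \ref{P:intstoch}, and Taylor-expand $n^{1/\alpha-1/\alpha_0}X_{\frac{i-1}{n}}^{1/\alpha_0-1/\alpha}$ to extract the $(\ln(n)w_n)^p$ contribution, with all stray powers of $X$ and $1/X$ absorbed via Proposition \ref{P:moments}. The only (cosmetic) difference is that you bundle the $\delta$- and $\alpha$-mismatches into a single multiplicative factor $g_i^n(\theta)-1$ in front of $n^{1/\alpha_0}\Delta_i^n L$, whereas the paper splits them into two additive terms, applying the $\alpha$-mismatch to $\int X_{s-}^{1/\alpha_0}\,dL_s$ and the $\delta$-mismatch to $L_{1/n}$; both groupings lead to the same estimates.
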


\begin{proof}
We first prove \eqref{eq:sup zn-L1 unif}.   Using the Markov property, we have to bound
\[  \E_{x = X_{\frac{i-1}{n}}}\sup_{(a,b) \in A, (\delta, \alpha) \in W_n^{(\eta)} } \left|z^n_1(\theta) - n^{1/\alpha_0} L_{\frac{1}{n}} \right|^p .\]
Since $X$ solves \eqref{eq:EDS} for the parameter value $\theta_0$, we have conditionally to $X_0=x$
\begin{equation} \label{eq:formule zn}
    z_1^n(\theta)= \frac{n^{1/\alpha}}{\delta x^{1/ \alpha}} \left( \frac{1}{n} (a_0-a) - \int_0^\frac{1}{n} (b_0 X_s - bx )ds + \delta_0 \int_0^\frac{1}{n} X_{s-}^{1/ \alpha_0} dL_s \right).
    \end{equation}
Observing that $z_1^n(\theta) - n^{1/\alpha_0} L_{\frac{1}{n}} =z^n_1(\theta) - \frac{\delta_0}{\delta}n^{1/\alpha_0} L_{\frac{1}{n}} + \frac{\delta_0 - \delta}{\delta}n^{1/\alpha_0} L_{\frac{1}{n}}$, we deduce
\begin{align*}
z_1^n(\theta) - n^{1/\alpha_0} L_{\frac{1}{n}} = &
\frac{n^{1/\alpha} }{n } \frac{(a_0-a)}{\delta x^{1/ \alpha}} 
- \frac{n^{1/\alpha} }{n } \frac{(b_0-b) x}{\delta x^{1/ \alpha}} 
- \frac{n^{1/\alpha}}{\delta x^{1/ \alpha}} b_0 \int_0^\frac{1}{n} \left(X_s - x \right) ds  \\
& +  \frac{\delta_0 }{\delta } \frac{n^{1/\alpha_0}}{x^{1/ \alpha_0}} \int_0^\frac{1}{n} (X_{s-}^{1/ \alpha_0} - x^{1/ \alpha_0}) dL_s \\
&
- \frac{\delta_0 }{\delta } \frac{n^{1/\alpha_0}}{x^{1/ \alpha_0}} \left(1 - \frac{n^{1/ \alpha - 1/ \alpha_0}}{x^{1/ \alpha - 1/ \alpha_0}} \right) \int_0^\frac{1}{n} X_{s-}^{1/ \alpha_0} dL_s  + \frac{\delta_0 - \delta}{\delta}n^{1/\alpha_0} L_{\frac{1}{n}}.
\end{align*}
For $(\delta, \alpha) \in W_n^{(\eta)}$, we have $| \delta-\delta_0 | \leq C w_n$ and  $| \alpha-\alpha_0 | \leq C w_n$. Moreover using Taylor's formula
\begin{align*}
    \left|1 - \frac{n^{1/ \alpha - 1/ \alpha_0}}{x^{1/ \alpha - 1/ \alpha_0}} \right| 
    & = \left| \ln(n/x)(1/ \alpha - 1/ \alpha_0) \right| e^y, \; \text{with} \;  |y| \leq |\ln(n/x)(1/ \alpha - 1/ \alpha_0)|. 
\end{align*}    
   We have $| \ln(n/x)(1/ \alpha - 1/ \alpha_0)| \leq C  \ln(n) \left( 1 + x +  \frac{1}{x} \right)w_n$ and using that $w_n \ln(n)$ goes to zero, we also observe that $e^{| y|} \leq Ce^{c| \ln(x)|} \leq C( 1 + x^c +  \frac{1}{x^c} )$ for $c>0$. We then deduce 
  %  & \leq C  \ln(n) \left( 1 + x +  \frac{1}{x} \right)w_n e^{C w_n |\ln(x)| } \nonumber \\
\begin{align}
    \left|1 - \frac{n^{1/ \alpha - 1/ \alpha_0}}{x^{1/ \alpha - 1/ \alpha_0}} \right| 
    & \leq C  \left( 1 + x^q +  \frac{1}{x^q} \right)\ln(n)w_n \quad \text{for} \quad q>0.   \label{pr distance alpha alpha0}
\end{align}
It yields
\begin{align*}
    \sup_{(a,b) \in A, \; (\delta, \alpha) \in W_n^{(\eta)} } \left|z^n_1(\theta) - n^{1/\alpha_0} L_{1 /n} \right| 
    \leq C  \left( 1 + x^q + \frac{1}{x^q} \right) \left[\frac{n^{1/\alpha_0} }{n }  \right.\\
   +  \frac{n^{1/\alpha_0} }{n } \sup_{s\in [0, \frac{1}{n}]} \left|X_s - x\right| 
    +  n^{1/\alpha_0}  | \int_0^\frac{1}{n} (X_{s-}^{1/ \alpha_0} - x^{1/ \alpha_0}) dL_s  | \\
 \left.   + \ln(n) w_n n^{1/\alpha_0} |\int_0^\frac{1}{n} X_{s-}^{1/ \alpha_0} dL_s| \right]
    + C w_n n^{1/\alpha_0} |L_{\frac{1}{n}}|.
\end{align*}
Combining this result with Proposition \ref{P:intstoch}, and using  that
$ \E( | n^{1/\alpha_0} L_{1 /n}|^p ) = \E (|L_1|^p) < + \infty$, for $p \in (0, \alpha_0)$,
we deduce finally for some $q>0$
%\begin{align}
%    \E_{x}  \left(\sup_{(a,b) \in A, (\delta, \alpha) \in W_n^{(\eta)} } \left|z^n_i(\theta) - n^{1/\alpha_0} L_{1 /n} \right|^p \right) \leq &
%    C_p  ( 1 + \frac{1}{x^q} + x^q ) \left[\frac{n^{p/\alpha_0}}{n^p}   \E_{x}  \sup_{s\in [0, \frac{1}{n}]} \left|X_s - x \right|^{p}   \right.\nonumber \\
 %  &  + \E_{ x} \left| n^{1/\alpha_0} \int_0^\frac{1}{n} (X_{s-}^{1/ \alpha_0} - x^{1/ \alpha_0}) dL_s  \right|^{p}  \nonumber \\
% &   \left. +  w_n^p  \E_x  \left|n^{1/\alpha_0} \int_0^\frac{1}{n} X_{s-}^{1/ \alpha_0} dL_s\right|^p  \right]\nonumber \\
% &    + C_p (\ln (n) w_n)^p. \label{eq:Esp zn -L1}
%\end{align}
\begin{align*}
    \E_{x}  \sup_{(a,b) \in A, \; (\delta, \alpha) \in W_n^{(\eta)} } \left|z^n_1(\theta) - n^{1/\alpha_0} L_{1 /n} \right|^p   & \leq C_p ( 1 + \frac{1}{x^{q}} + x^{q} ) \left[ \frac{n^{p/\alpha_0}}{n^p} + \frac{1}{n^{p/ \alpha_0}}  \right. \\
    & \left. + (\ln (n)w_n)^p  \right],
\end{align*}
and \eqref{eq:sup zn-L1 unif} is proved.

Turning to \eqref{eq:zn(theta0)-L1}, we  
  have the simpler decomposition conditionally to $X_0=x$
\begin{align*}
z_1^n(\theta_0) - n^{1/\alpha_0} L_{1 /n} = -b_0\frac{n^{1/\alpha_0}}{\delta_0 x^{1/ \alpha_0}} \int_0^\frac{1}{n} \left(X_s - x \right) ds  + \frac{n^{1/\alpha_0}}{x^{1/ \alpha_0}} \int_0^\frac{1}{n} (X_{s-}^{1/ \alpha_0} - x^{1/ \alpha_0}) dL_s.
\end{align*}
Using  Proposition \ref{P:intstoch}
 we obtain
\begin{align*}
    \E_{ x} \left| z_1^n(\theta_0) - n^{1/\alpha_0} L_{1 /n} \right|^p  \leq \frac{C_p}{n^{p/ \alpha_0}} \left( 1 + \frac{1}{x^{q}} + x^p \right).
\end{align*}
\end{proof}

\begin{lem}  \label{L:Esp(zn puissance k)}
Let $\Omega_K=\{ \sup_{s \in [0,1]} X_s \leq K \}$ and $W_n^{(\eta)}$ be defined by \eqref{Voisinage Wn}. We have for any compact set $A \subset (0, +\infty) \times \R$ and $ \forall q > 0$
\begin{equation*} 
 \E  \left(\sup_{(a,b) \in A, \; (\delta, \alpha) \in W_n^{(\eta)} } \left(|z^n_i(\theta) |^q {\bf 1}_{z^n_i(\theta)<0} \right) {\bf 1}_{\Omega_K} |\mathcal{F}_\frac{i-1}{n}\right)  \leq C_{q,K} \left( 1 + \frac{1}{X_{\frac{i-1}{n}}^{q}} \right).
\end{equation*}
\end{lem}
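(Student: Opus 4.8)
The plan is to reduce, via the Markov property, to a statement about a single increment started from a fixed point $x$, and then to split $(z^n_i(\theta))^-=|z^n_i(\theta)|{\bf 1}_{z^n_i(\theta)<0}$ into a deterministic drift contribution and the negative part of a stochastic integral, the latter being controlled through the spectral positivity of $L$. First I would note that $\Omega_K\subseteq\{\sup_{s\in[\frac{i-1}{n},\frac{i}{n}]}X_s\le K\}$, so that ${\bf 1}_{\Omega_K}$ is dominated by an indicator measurable with respect to the increment on $[\frac{i-1}{n},\frac{i}{n}]$; the Markov property then gives
$$\E\Big(\sup_{A\times W_n^{(\eta)}}\big((z^n_i(\theta))^-\big)^q{\bf 1}_{\Omega_K}\,\big|\,\mathcal F_{\frac{i-1}{n}}\Big)\le \E_{x=X_{\frac{i-1}{n}}}\Big(\sup_{A\times W_n^{(\eta)}}\big((z^n_1(\theta))^-\big)^q{\bf 1}_{\widetilde\Omega_K}\Big),$$
with $\widetilde\Omega_K=\{\sup_{s\in[0,1/n]}X_s\le K\}$. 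Using the explicit expression \eqref{eq:formule zn}, I write $z^n_1(\theta)=\frac{n^{1/\alpha}}{\delta x^{1/\alpha}}\big(D+\delta_0 I\big)$, where $I=\int_0^{1/n}X_{s-}^{1/\alpha_0}dL_s$ does not depend on the parameters and $D$ gathers the drift terms, which on $\widetilde\Omega_K$ satisfy $\sup_{(a,b)\in A}|D|\le C_K/n$ (since $x\le K$, $X_s\le K$ and $(a,b)$ ranges in a compact set). Because the prefactor is positive, $(z^n_1(\theta))^-\le\frac{n^{1/\alpha}}{\delta x^{1/\alpha}}\big(C_K/n+\delta_0 I^-\big)$.

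The scaling factors are treated as in Lemma \ref{L:zn-L1}: on $W_n^{(\eta)}$ one has $\delta\ge\delta_0/2$ and $n^{q/\alpha}\le C\,n^{q/\alpha_0}$ for $n$ large (because $|1/\alpha-1/\alpha_0|\ln n\le Cw_n\ln n\to0$), while $1/x^{q/\alpha}\le 1+1/x^q$. Raising the bound above to the power $q$, the deterministic summand contributes $\frac{C_K^q}{\delta^q x^{q/\alpha}}\,n^{q/\alpha-q}\le C_{q,K}(1+1/x^q)$ (here $n^{q/\alpha-q}\le1$), so everything reduces to the key estimate
$$\E_x\big((I^-)^q{\bf 1}_{\widetilde\Omega_K}\big)\le \frac{C_{q,K}}{n^{q/\alpha_0}},\qquad\forall q>0,$$
after which the remaining factor $\frac{n^{q/\alpha}}{x^{q/\alpha}}\E_x((I^-)^q{\bf 1}_{\widetilde\Omega_K})$ is $\le C_{q,K}(1+1/x^q)$.

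The heart of the argument, and the main obstacle, is this one-sided moment bound, which must hold for \emph{all} $q>0$ even though $I$ only has moments of order $p<\alpha_0$; the gain comes entirely from the spectral positivity of $L$. I would split $L$ at the self-similar scale $\epsilon_n=n^{-1/\alpha_0}$, writing $L=L^{(b)}+L^{(s)}$ with $L^{(b)}_t=\int_0^t\int_{\epsilon_n}^{+\infty}z\,d\widetilde N$ and $L^{(s)}_t=\int_0^t\int_{0}^{\epsilon_n}z\,d\widetilde N$, and correspondingly $I=I^{(b)}+I^{(s)}$, so that $I^-\le\sup_{t\le1/n}(-I^{(b)}_t)+\sup_{t\le1/n}(-I^{(s)}_t)$. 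For the large-jump part, the jumps of $I^{(b)}$ are nonnegative, hence $\sup_{t\le1/n}(-I^{(b)}_t)\le\int_0^{1/n}X_{s-}^{1/\alpha_0}ds\int_{\epsilon_n}^{+\infty}zF^{\alpha_0}(dz)\le C_K\,\epsilon_n^{1-\alpha_0}/n=C_K\,n^{-1/\alpha_0}$ on $\widetilde\Omega_K$, a deterministic bound with the claimed rate. For the small-jump part, $I^{(s)}$ (with the integrand replaced by the bounded process $X_{s-}^{1/\alpha_0}{\bf 1}_{\sup_{u\le s}X_u\le K}$, which agrees with it on $\widetilde\Omega_K$) is a martingale whose jumps are bounded by $C_K\epsilon_n=C_Kn^{-1/\alpha_0}$ and whose predictable bracket satisfies $\langle I^{(s)}\rangle_{1/n}\le C_K\,\epsilon_n^{2-\alpha_0}/n=C_Kn^{-2/\alpha_0}$. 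Since both scales now match at $n^{-1/\alpha_0}$, a Bernstein-type exponential inequality for martingales with bounded jumps yields $\PP(\sup_{t\le1/n}(-I^{(s)}_t)\ge\lambda n^{-1/\alpha_0})\le 2\exp(-\lambda^2/(C(1+\lambda)))$, whence $\E_x((\sup_{t\le1/n}(-I^{(s)}_t))^q{\bf 1}_{\widetilde\Omega_K})\le C_q n^{-q/\alpha_0}$ for every $q>0$. Combining the two parts gives the key estimate and completes the proof. The delicate point throughout is that the $n$-dependent truncation $\epsilon_n$ must be chosen exactly at the self-similar scale $n^{-1/\alpha_0}$, so that the compensator drift of the large jumps and the Gaussian/jump scale of the small jumps both produce the same rate; a fixed cutoff would make the small-jump martingale contribute the worse rate $n^{-q/2}$.
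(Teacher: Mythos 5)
Your proof is correct, and its skeleton coincides with the paper's: reduce by the Markov property to a single increment started from $x$, use the explicit formula \eqref{eq:formule zn} to split $z^n_1(\theta)$ into a drift part (bounded by $C_K/n$ on the localization set, uniformly over $A\times W_n^{(\eta)}$) and the stochastic integral $I=\int_0^{1/n}X_{s-}^{1/\alpha_0}\,dL_s$, handle the scaling prefactors via $n^{q(1/\alpha-1/\alpha_0)}\leq C$ and $x^{-q/\alpha}\leq 1+x^{-q}$, and reduce everything to an all-order one-sided moment bound $\E_x\bigl((I^-)^q\,{\bf 1}_{\widetilde\Omega_K}\bigr)\leq C_{q,K}\,n^{-q/\alpha_0}$ that rests on spectral positivity. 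Where you genuinely diverge is in the proof of that key bound. The paper applies the time-change representation of Kallsen and Shiryaev to write $n^{1/\alpha_0}I=\tilde L_{\hat T}$ with $\hat T\leq K$ on the localization set, and then invokes the super-exponential lower-tail estimate $\PP(\inf_{[0,K]}\tilde L<-s)\leq\exp(-c\,s^{\alpha_0/(\alpha_0-1)}/K^{1/(\alpha_0-1)})$ for spectrally positive stable processes (Lemma 2.4 of Fleischmann--Mytnik--Wachtel), which immediately yields all moments of the negative part. You instead cut the L\'evy measure at the self-similar scale $\epsilon_n=n^{-1/\alpha_0}$, bound the negative part of the large-jump piece deterministically by its compensator ($C_K\epsilon_n^{1-\alpha_0}/n=C_Kn^{-1/\alpha_0}$), and control the small-jump compensated martingale by a Freedman/Bernstein inequality, after the standard replacement of the integrand by its version stopped at the first exit of $X$ from $[0,K]$ so that jumps ($\leq C_K\epsilon_n$) and bracket ($\leq C_K n^{-2/\alpha_0}$) are deterministically bounded; since both scales match at $n^{-1/\alpha_0}$, the rescaled tail is sub-exponential uniformly in $n$ and all moments follow. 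Both arguments are valid. The paper's is shorter given the two cited results and produces the sharp stable lower-tail profile; yours is more self-contained (no time-change theorem, no stable-specific tail asymptotics) and would extend to more general spectrally positive driving noises. Your closing remark that the cutoff must sit exactly at $n^{-1/\alpha_0}$ is exactly right. The only cosmetic point to tidy is predictability of the truncated integrand: use the integrand stopped at the hitting time of level $K$ (left limits) rather than the raw indicator ${\bf 1}_{\sup_{u\leq s}X_u\leq K}$; on $\widetilde\Omega_K$ the two integrals agree, so nothing else changes.
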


\begin{proof}
We introduce the localisation
$\Omega_K^{i,n} = \{ \sup_{s \in [\frac{i-1}{n}, \frac{i}{n}]} X_s \leq K \}$.  Since $\Omega_K  \subset \Omega_K^{i,n}$, we just have to bound
\[
\E\left( \sup_{(a,b) \in A,\;  (\delta, \alpha) \in W_n^{(\eta)} } \left(|z^n_i(\theta) |^q {\bf 1}_{z^n_i(\theta)<0} \right) {\bf 1}_{\Omega_K^{i,n}} |\mathcal{F}_\frac{i-1}{n}\right),\]
and from  the Markov property, this reduces to bound
\[
\E_{x=X_{\frac{i-1}{n}}} \left( \sup_{(a,b) \in A,\;  (\delta, \alpha) \in W_n^{(\eta)} } \left(|z^n_1(\theta) |^q {\bf 1}_{z_1^n(\theta)<0} \right) {\bf 1}_{\Omega_K^{1,n}} \right).\]
From equation \eqref{eq:formule zn}, conditionally to $X_0=x$, we can write 
\[z_1^n(\theta) = \frac{1}{ x^{1/ \alpha}} \left( Y^{\theta ,n} + \frac{\delta_0}{\delta} n^{1/\alpha - 1/\alpha_0} I_1^n \right),\] 
where
\[Y^{\theta ,n} = \frac{n^{1/\alpha}}{\delta} \left( \frac{1}{n}(a_0-a) - \int_0^\frac{1}{n} (b_0 X_s - b x )ds \right), \]
and
\[ I_1^n  = n^{1/\alpha_0} \int_0^\frac{1}{n} X_{s-}^{1/ \alpha_0} dL_s. \]
Observing that 
\[  \sup_{(a,b) \in A,\;  (\delta, \alpha) \in W_n^{(\eta)} }|Y^{\theta ,n}| {\bf 1}_{\Omega_K^{1,n}} \leq C_K,\]
 there exists a constant $M_K>0$ such that
\[ \left\{z^n_1(\theta) \leq \frac{-M_K}{x^{1/ \alpha}} \right\} \cap \Omega_K^{1,n} \subset \left\{ I_1^n  < 0 \right\} \cap \Omega_K^{1,n} .\]
Using the decomposition
\[{\bf 1}_{z_1^n(\theta)<0} = {\bf 1}_{\left\{z^n_1(\theta) \leq \frac{-M_K}{x^{1/ \alpha}} \right\}} + {\bf 1}_{\left\{ \frac{-M_K}{x^{1/ \alpha}} <z^n_1(\theta) < 0 \right\}},\]
we deduce then
\[ \sup_{(a,b) \in A, \; (\delta, \alpha) \in W_n^{(\eta)} } \left(|z^n_1(\theta) |^q {\bf 1}_{z_1^n(\theta)<0} \right) {\bf 1}_{\Omega_K^{1,n}}  \leq C_K \left( 1 + \frac{1}{x^{q}} \right) \left(1 + |I_1^n|^q  {\bf 1}_{\left\{ I_1^n  < 0 \right\}} \right){\bf 1}_{\Omega_K^{1,n}} . \]
Consequently, we just have to bound $\E_{x}  |I_1^n|^q {\bf 1}_{\left\{ I_1^n  < 0 \right\}} {\bf 1}_{\Omega_K^{1,n}} $.
%We therefore have 
%\[ \E_{x} \left( \sup_{(a,b) \in A, \; (\delta, \alpha) \in W_n^{(\eta)} } \left(|z^n_1(\theta) |^q {\bf 1}_{z_1^n(\theta)<0} \right) {\bf 1}_{\Omega_K^{1,n}} \right) \leq C_K \left( 1 + \frac{1}{x^{q}} \right) \left( 1+ \E_{x} \left( |I_1^n|^q {\bf 1}_{\left\{ I_1^n  < 0 \right\}} {\bf 1}_{\Omega_K^{1,n}} \right) \right). \]
To this end, we first remark that from Fubini's Theorem we have
%\begin{align*}
%    \E_{x} \left( \left(-X \right)^q {\bf 1}_{X<0} {\bf 1}_{\Omega_K^{1,n}} \right) &= \int_\Omega {\bf 1}_{\Omega_K^{1,n}}(\omega) (-X(\omega))^q {\bf 1}_{X(\omega) <0} d\PP_{x} (\omega) \\
%    &= \int_0^{+ \infty} q s^{q-1} \left( \int_\Omega {\bf 1}_{\Omega_K^{1,n}}(\omega) {\bf 1}_{X(\omega) < -s} d\PP_{x} (\omega) \right) ds \\
%    &= \int_0^{+ \infty} q s^{q-1} \PP_{x} \left( \Omega_K^{1,n} \cap \{ X <-s\}\right) ds.
%\end{align*}
%Hence 
\begin{equation} \label{eq : I ni}
    \E_{x} \left( |I_{1}^n|^q {\bf 1}_{\left\{ I_1^n  < 0 \right\}} {\bf 1}_{\Omega_K^{1,n}} \right) = \int_0^{+ \infty} q s^{q-1} \PP_{x} \left( \Omega_K^{1,n} \cap \{ I_1^n <-s\}\right) ds.
\end{equation}
Using the change of time described in Theorem 1.5 of \cite{ChangementTemps}, that we can apply as $\forall t > 0, X_t \geq 0$, we have the representation
\[ I_1^n = \tilde{L}_{\hat{T}_{n,1}}, \quad \text{where } \hat{T}_{n,1} = \int_0^\frac{1}{n} X_s  ds , \]
and $(\tilde{L}_t)_{t \geq 0}$ is a $\alpha_0$-stable L\'evy process (with characteristic function \eqref{E:char}) defined on the same probability space. 
By definition of $\Omega_K^{1,n}$
\[ 0 \leq {\bf 1}_{\Omega_K^{1,n}} \hat{T}_{n,1} \leq K,\]
then
%\[ \forall s > 0, \quad \Omega_K^{1,n} \cap \{I_1^n < -s \} \subset \left\{  \inf_{t \in [0,K]} \tilde{L}_t < -s \right\} \cap \Omega_K^{1,n}. \]
%Hence
\[ \forall s > 0, \quad \PP_x \left( \Omega_K^{1,n} \cap \{I_1^n < -s\} \right) \leq \PP_x \left( \inf_{t \in [0,K]} \tilde{L}_t < -s \right),\]
and from Lemma 2.4 in \cite{ProbaAlphaStableNegative} we obtain
\begin{align} \label{eq : alpha-stable neg}
    \forall s > 0, \quad \PP_x \left( \Omega_K^{1,n} \cap \{I_1^n < -s\} \right) \leq \exp \left( - \left( \frac{\alpha_0-1}{\alpha_0} \right)^{\frac{\alpha_0}{\alpha_0 -1}} \frac{s^{\frac{\alpha_0}{\alpha_0-1}}}{K^{\frac{1}{\alpha_0-1}} }\right).
\end{align}
Reporting \eqref{eq : alpha-stable neg} in \eqref{eq : I ni}, it yields
\[ \E_{x} \left( |I_{1}^n|^q {\bf 1}_{\left\{ I_1^n  < 0 \right\}} {\bf 1}_{\Omega_K^{1,n}} \right) \leq C_{q,K}.\]
This allows to conclude that
\[ \E_{x} \left( \sup_{(a,b) \in A, \; (\delta, \alpha) \in W_n^{(\eta)} } \left(|z^n_1(\theta) |^q {\bf 1}_{z_1^n(\theta)<0} \right) {\bf 1}_{\Omega_K^{1,n}} \right) \leq C_{q,K} \left( 1 + \frac{1}{x^{q}} \right) .\]

\end{proof}

\subsection{Proof of Theorem \ref{Th : LFGN}} \label{Ss:LFGN}

We first remark that if $h$ satisfies Assumption \ref{A:conditions h} then using Taylor's formula we have  $\forall \alpha, \alpha' \in (1,2)$ and $ \forall x, z \in \R$
    \begin{align} \label{E:Rmk h(x) - h(z)}
       \left|h (x, \alpha) - h (z, \alpha)\right| \leq  & C \left|x - z \right| \left( 1 + |x|^q {\bf 1}_{x<0} + (\ln(1+x))^q {\bf 1}_{x>0} \right. \nonumber  \\
        & \quad \quad  \left. + |z|^q {\bf 1}_{z<0} + (\ln(1+z))^q {\bf 1}_{z>0} \right),
    \end{align}
    \begin{equation*}
        \left|h (x, \alpha) - h (x, \alpha')\right| \leq C \left|\alpha - \alpha' \right| \left( 1 + |x|^q {\bf 1}_{x<0} + (\ln(1+x))^q {\bf 1}_{x>0}\right).
    \end{equation*}
To prove the result of Theorem \ref{Th : LFGN}, we check the following convergences in probability

\begin{equation}\label{pr:thet-thet0}
    \sup_{ \theta \in A \times W_n^{(\eta)} } (\ln n)^q\left| \frac{1}{n}\sum_{i=1}^n \left( f(X_{\frac{i-1}{n}}, \delta, \alpha) - f(X_{\frac{i-1}{n}}, \delta_0, \alpha_0) \right) h(z^n_i(\theta), \alpha) \right| \to 0,
\end{equation}
\begin{equation}\label{pr:zn-L1}
    \sup_{\theta \in A \times W_n^{(\eta)} }  (\ln n)^q \left| \frac{1}{n}\sum_{i=1}^n f(X_{\frac{i-1}{n}}, \delta_0, \alpha_0) \left( h(z^n_i(\theta), \alpha) - h(n^{1/ \alpha_0} \Delta_i^n L,\alpha) \right) \right| \to 0,
\end{equation}
\begin{equation}\label{pr:alph-alph0}
    \sup_{\theta \in A \times W_n^{(\eta)} }  (\ln n)^q \left| \frac{1}{n}\sum_{i=1}^n f(X_{\frac{i-1}{n}}, \delta_0, \alpha_0) \left( h(n^{1/ \alpha_0} \Delta_i^n L,\alpha) - h(n^{1/ \alpha_0} \Delta_i^n L, \alpha_0) \right) \right| \to 0,
\end{equation}
\begin{equation} \label{pr:L1-esp}
 ( \ln n)^q   \left| \frac{1}{n}\sum_{i=1}^n f(X_{\frac{i-1}{n}}, \delta_0, \alpha_0) \left( h(n^{1/ \alpha_0} \Delta_i^n L,\alpha_0) - \E(h(L_1,\alpha_0)) \right) \right| \to 0,
\end{equation}
\begin{equation}\label{pr:f-int}
  (\ln n)^q  \left| \frac{1}{n}\sum_{i=1}^n f(X_{\frac{i-1}{n}}, \delta_0, \alpha_0) - \int_0^1 f(X_s, \delta_0, \alpha_0)ds \right| \to 0.    
\end{equation}
To simplify the presentation, in the proof of \eqref{pr:zn-L1}-\eqref{pr:f-int}, we omit the dependence in $(\delta_0, \alpha_0)$ in the expression of $f$.
%%%%%

\noindent
\underline{Proof of \eqref{pr:f-int}.}  First we have  
$\int_0^1 \left| f(X_s) \right| ds < + \infty \quad \text{a.e.}$
as a consequence of Proposition \ref{P:moments} and as $s \to X_s$ is c\`adl\`ag. Next
we set
\[I_n = (\ln n)^q\left|\frac{1}{n} \sum_{i=1}^n f(X_{\frac{i-1}{n}}) -  \int_0^1 f(X_s) ds \right|  .\]
Introducing the localisation $\Omega_K=\{ \sup_{s \in [0,1]} X_s \leq K \}$, we just have to prove the convergence to zero of  
 $I_n {\bf 1}_{\Omega_K} $, $\forall \ K>0$. But from Taylor's formula
%We have
%\[ \forall \ K>0, \quad \limsup \PP \left( I_n  > \epsilon \right) \leq \limsup \PP \left( \{ I_n  > \epsilon \right \} \cap \Omega_K) + \PP \left(\sup_{s \in [0,1]} X_s > K \right).\]
\begin{align*}
    I_n {\bf 1}_{\Omega_K} & \leq (\ln n)^q\left| \int_0^1 \left( f(X_s) - f(X_\frac{\floor{ns}}{n}) \right) ds \right|  {\bf 1}_{\Omega_K}  \\
%    & \leq \int_0^1 \left| X_s - X_\frac{\floor{ns}}{n} \right| \left| f'(c_i) \right| {\bf 1}_{\Omega_K} ds, \quad X_\frac{\floor{ns}}{n} \leq c_i \leq X_s \\
    & \leq C_K (\ln n)^q\int_0^1 \left| X_s - X_\frac{\floor{ns}}{n} \right| \left( 1 + \frac{1}{X_s^q} + \frac{1}{X_\frac{\floor{ns}}{n}^q} \right) ds.
\end{align*}
Therefore using successively Fubini's Theorem and H\"{o}lder's inequality with $1<p<\alpha_0$
%\[\E \left( I_n {\bf 1}_{\Omega_K} \right) \leq C_K \int_0^1 \E \left(\left| X_s - X_\frac{\floor{ns}}{n} \right| \left( 1 + \frac{1}{X_s^p} + \frac{1}{X_\frac{\floor{ns}}{n}^p} \right) \right) ds.\]
%We have using H\"{o}lder's inequality with $1<l<\alpha$,
\[\E \left( I_n {\bf 1}_{\Omega_K} \right) \leq C_K (\ln n)^q\int_0^1 \left(\E \left| X_s - X_\frac{\floor{ns}}{n} \right|^p \right)^{1/p} \left(\E  \left( 1 + \frac{1}{X_s^q} + \frac{1}{X_\frac{\floor{ns}}{n}^q} \right)^{p'} \right)^{1/p'} ds.\]
From Proposition \ref{P:intstoch} and  Proposition \ref{P:moments}, we get
$\E \left( I_n {\bf 1}_{\Omega_K} \right) \leq  C_{p,K} (\ln n)^q/n^{1 / \alpha_0}$ and we deduce the result.
%$\forall K > 0$,  $\PP( \{ |I_n | > \epsilon  \} \cap \Omega_K) \rightarrow 0.$
%Moreover, using Proposition \ref{P:moments} equation \eqref{eq:MomentsX}, we conclude
%\[ \limsup \PP \left( \left| I_n \right| > \epsilon \right) \leq \PP \left(\sup_{s \in [0,1]} X_s > K \right) \leq \frac{\E (\sup_{s\in[0,1]} X_s)}{K} \to_{K\to +\infty} 0.\]

%%%%%%
\noindent
\underline{Proof of \eqref{pr:L1-esp}.}  We recall classical results on the convergence in probability of triangular arrays (see \cite{TriangularArray}). Let $(\zeta_i^n)$ be a triangular array such that $\zeta_i^n$ is $\mathcal{F}_\frac{i-1}{n}$-measurable. To prove that $\sum_{i=1}^n \zeta_i^n \to 0$ in probability, it is sufficient to check the following convergences in probability
\begin{align*}
    \sum_{i=1}^n \left|\E ( \zeta_i^n  |\mathcal{F}_\frac{i-1}{n})\right| \to 0 \quad \text{ and} \quad
    \sum_{i=1}^n \E( \left|\zeta_i^n \right|^2 |\mathcal{F}_\frac{i-1}{n}) \to 0.
\end{align*}
Setting 
\[\zeta_i^n = \frac{(\ln n)^q}{n} f(X_{\frac{i-1}{n}}) \left( h(n^{1/ \alpha_0} \Delta_i^n L,\alpha_0) - \E(h(L_1,\alpha_0)) \right),\]
we check immediately
$\E(  \zeta_i^n |\mathcal{F}_\frac{i-1}{n}) = 0.$
Next we deduce from Assumption \ref{A:conditions h}  
\[ \forall p > 0, \quad \E\left( |h(L_1,\alpha_0)|^p \right) < +\infty, \]
and it yields
\[ \E( \left|\zeta_i^n \right|^2 |\mathcal{F}_\frac{i-1}{n}) = \frac{(\ln n)^{2q}}{n^2} f(X_{\frac{i-1}{n}})^2 \text{Var}(h(L_1,\alpha_0)).\]
We conclude using  the convergence in probability  \ref{pr:f-int}
\[\frac{1}{n} \sum_{i=1}^n f(X_{\frac{i-1}{n}})^2 \to \int_0^1 f(X_s)^2 ds.\]
%Hence 
%\[ \sum_{i=1}^n \E_{ |\mathcal{F}_\frac{i-1}{n}}  \left|\zeta_i^n \right|^2 = \text{Var}(h(L_1)) \frac{1}{n^2} \sum_{i=1}^n f(X_{\frac{i-1}{n}},  \delta_0, \alpha_0)^2 \to 0. \]
\noindent
\underline{Proof of \eqref{pr:thet-thet0}.} We use as previously the truncation $\Omega_K$ and set
$$
T_n=\sup_{(a,b) \in A, \; (\delta, \alpha) \in W_n^{(\eta)} } \biggl| \frac{(\ln n)^q}{n}\sum_{i=1}^n \biggl( f(X_{\frac{i-1}{n}}, \delta, \alpha) - f(X_{\frac{i-1}{n}}, \delta_0, \alpha_0) \biggl) h(z^n_i(\theta),\alpha) \biggl| {\bf 1}_{\Omega_K}.  
$$
A Taylor expansion, assumption \eqref{eq : conditions f}, and the definition of $W_n^{(\eta)}$ give 
\begin{align*}
T_n 
\leq C_K \frac{(\ln n)^qw_n}{n}\sum_{i=1}^n \ \left( 1 + \frac{1}{X_{\frac{i-1}{n}}^q}\right) \sup_{(a,b) \in A, \; (\delta, \alpha) \in W_n^{(\eta)} } \left| h(z^n_i(\theta),\alpha) \right| {\bf 1}_{\Omega_K},&
\end{align*}
and taking the expectation
\begin{align*}
    \E T_n    \leq  C_K  \frac{(\ln n)^qw_n}{n}\sum_{i=1}^n 
    \E  \left( (1 + \frac{1}{X_{\frac{i-1}{n}}^p}) \E \left( \sup_{(a,b) \in A, \; (\delta, \alpha) \in W_n^{(\eta)} } \left| h(z^n_i(\theta),\alpha) \right| {\bf 1}_{\Omega_K} |\mathcal{F}_\frac{i-1}{n} \right) \right).
\end{align*}
But from Assumption  \ref{A:conditions h} 
\begin{align*}
    \sup_{(a,b) \in A, \; (\delta, \alpha) \in W_n^{(\eta)} }  \left| h(z^n_i(\theta),\alpha) \right|{\bf 1}_{\Omega_K} 
& \leq C \left( 1 + \sup_{(a,b) \in A, \; (\delta, \alpha) \in W_n^{(\eta)} } \left|z^n_i(\theta) \right|^{q} {\bf 1}_{z^n_i(\theta)<0}  \right.\\
 & \left. + \sup_{(a,b) \in A, \; (\delta, \alpha) \in W_n^{(\eta)} } (\ln(1+z^n_i(\theta)))^{q} {\bf 1}_{z^n_i(\theta)>0} \right) {\bf 1}_{\Omega_K}.
\end{align*}
Observing from \eqref{eq:def zn} that 
\begin{equation} \label{E:logzn}
 \sup_{(a,b) \in A, \; (\delta, \alpha) \in W_n^{(\eta)} } \ln(1+|z^n_i(\theta) |) {\bf 1}_{\Omega_K} \leq C_K \ln(n) \left(1 + \frac{1}{X_{\frac{i-1}{n}}} \right),
 \end{equation}
and using Lemma \ref{L:Esp(zn puissance k)}, we deduce
\begin{align*}
    \E\left( \sup_{(a,b) \in A, \; (\delta, \alpha) \in W_n^{(\eta)} } \left| h(z^n_i(\theta),\alpha) \right| {\bf 1}_{\Omega_K}  |\mathcal{F}_\frac{i-1}{n}\right)
    \leq C_K (\ln n)^q \left(1+ \frac{1}{X_{\frac{i-1}{n}}^q} \right) .
\end{align*}
Hence from Proposition \ref{P:moments} $\E T_n \leq C_K (\ln n)^q w_n$ 
and we conclude using \eqref{Voisinage Wn}.

\noindent
\underline{Proof of \eqref{pr:alph-alph0}.} We get from   Assumption \ref{A:conditions h}, \eqref{eq : conditions f},  \eqref{E:Rmk h(x) - h(z)} and the definition of $W_n^{(\eta)}$
\begin{align*}
    \sup_{(a,b) \in A, \; (\delta, \alpha) \in W_n^{(\eta)} } & \left|  \frac{(\ln n)^q}{n}\sum_{i=1}^n f(X_{\frac{i-1}{n}}) \left( h(n^{1/ \alpha_0} \Delta_i^n L,\alpha) - h(n^{1/ \alpha_0} \Delta_i^n L,\alpha_0) \right) {\bf 1}_{\Omega_K} \right| \\
    & \leq C_K \frac{(\ln n)^qw_n}{n}\sum_{i=1}^n ( 1 + \frac{1}{X_{\frac{i-1}{n}}^q}) \left[ 1 + |n^{1/ \alpha_0} \Delta_i^n L|^q {\bf 1}_{n^{1/ \alpha_0} \Delta_i^n L<0} \right. \\
& \quad \quad \quad \quad \left.     + (\ln(1+n^{1/ \alpha_0} \Delta_i^n L))^q {\bf 1}_{n^{1/ \alpha_0} \Delta_i^n L>0} \right].
\end{align*}
Taking the expectation,  using the properties of the L\'evy process $(L_t)$ and Proposition \ref{P:moments}, we deduce the result. 
%\begin{align*}
 %   \E \biggl( \sup_{(a,b) \in A, \; (\delta, \alpha) \in W_n^{(\eta)} } \biggl| & \frac{1}{n}\sum_{i=1}^n f(X_{\frac{i-1}{n}}, \delta_0, \alpha_0) \left( h_\alpha(n^{1/ \alpha_0} \Delta_i^n L) - h_{\alpha_0}(n^{1/ \alpha_0} \Delta_i^n L) \right) {\bf 1}_{\Omega_K} \biggl| \biggl) \\
%    & \leq C_K \frac{\ln(n)^k}{\sqrt{n}} \sup_{s \in [0,1]}\E\left( 1 + \frac{1}{X_s^p}\right) \E \left( 1 + |L_1|^q {\bf 1}_{L_1<0} + \ln(1+L_1)^q {\bf 1}_{L_1>0} \right) \to 0.
%\end{align*}

\noindent
\underline{Proof of \eqref{pr:zn-L1}.} Using Assumption  \ref{A:conditions h},  \eqref{eq : conditions f} and  \eqref{E:Rmk h(x) - h(z)}

\begin{align} \label{E:zn-L1}
    \E \biggl( \sup_{(a,b) \in A, \; (\delta, \alpha) \in W_n^{(\eta)} } \biggl| \frac{(\ln n)^q}{n}\sum_{i=1}^n f(X_{\frac{i-1}{n}}) \left( h(z^n_i(\theta),\alpha) - h(n^{1/ \alpha_0} \Delta_i^n L,\alpha) \right) \biggl| {\bf 1}_{\Omega_K} \biggl) \nonumber \\
     \leq C_K \frac{(\ln n)^q}{n}\sum_{i=1}^n \E \left(
    \sup_{(a,b) \in A, \; (\delta, \alpha) \in W_n^{(\eta)} } \left|z^n_i(\theta) - n^{1/\alpha_0} \Delta_i^n L \right| {\bf 1}_{\Omega_K} H_{i,n} \right),
\end{align}
where
\begin{align*}
H_{i,n} = ( 1 + \frac{1}{X_{\frac{i-1}{n}}^q} ) 
\left[ 1 + \sup_{(a,b) \in A, \; (\delta, \alpha) \in W_n^{(\eta)} } \left(|z^n_i(\theta) |^q {\bf 1}_{z^n_i(\theta)<0} {\bf 1}_{\Omega_K} \right)  \right.\\
+ \sup_{(a,b) \in A, \; (\delta, \alpha) \in W_n^{(\eta)} } \left(\ln(1+z^n_i(\theta))^q {\bf 1}_{z^n_i(\theta)>0} {\bf 1}_{\Omega_K} \right) \\
\left.  + \left|n^{1/ \alpha_0} \Delta_i^n L \right|^q {\bf 1}_{n^{1/ \alpha_0} \Delta_i^n L<0}  + \left(\ln (1+ |n^{1/ \alpha_0} \Delta_i^n L | )\right)^q {\bf 1}_{n^{1/ \alpha_0} \Delta_i^n L > 0} \right].
\end{align*}
We use H\"{o}lder's inequality with $\frac{1}{p} + \frac{1}{p'} = 1$ and $p < \alpha_0$ to bound \eqref{E:zn-L1}. From Lemma \ref{L:Esp(zn puissance k)}, Proposition \ref{P:moments} and \eqref{E:logzn} we have immediately
$$
 \E \left( H_{i,n}^{p'} \right) \leq C_{p',K} (\ln n)^{p'}.
$$
Moreover since $\Omega_K \subset \{ X_{\frac{i-1}{n}} \leq K \}$, we deduce combining Lemma \ref{L:zn-L1} equation \eqref{eq:sup zn-L1 unif} with Proposition \ref{P:moments}
\begin{align*}
    \E \left(\sup_{(a,b) \in A, \; (\delta, \alpha) \in W_n^{(\eta)} } \left|z^n_i(\theta) - n^{1/ \alpha_0} \Delta_i^n L \right|^p {\bf 1}_{\Omega_K} \right)      \leq C_{p,K}  (\frac{n^{p/ \alpha_0}}{n^{p}}  + (\ln (n) w_n)^p). 
    \end{align*}
Since $\forall q>0$ $(\ln n)^q w_n \rightarrow 0$, we obtain the convergence to zero of the right-hand side term of \eqref{E:zn-L1}.

%\noindent
%2. We can still prove \eqref{pr:thet-thet0}, \eqref{pr:zn-L1}, \eqref{pr:alph-alph0} and \eqref{pr:L1-esp} with a factor $\ln(n)^q$.

\subsection{Proof of Theorem \ref{Th : TCL}} \label{Ss:TCL}
    
    We first prove the following convergence in probability for $k, j \in \{1, ... , d \}$
\begin{equation}
    \frac{1}{\sqrt{n}} \sum_{i=1}^n f_{kj}(X_{\frac{i-1}{n}}) \left( h_j(z^n_i(\theta_0)) - h_j( n^{1/ \alpha_0} \Delta_i^n L) \right) \to 0.
\end{equation}
Using  Assumption \ref{A:conditions h},  \eqref{eq : conditions f i} and  \eqref{E:Rmk h(x) - h(z)} with the truncation $\Omega_K$
\begin{align*}
     \E \biggl( \biggl| \frac{1}{\sqrt{n}}\sum_{i=1}^n f_{kj}(X_{\frac{i-1}{n}}) & \left( h_j(z^n_i(\theta_0)) - h_j(n^{1/ \alpha_0} \Delta_i^n L) \right) \biggl| {\bf 1}_{\Omega_K} \biggl) \\
    & \leq C_K \frac{1}{\sqrt{n}} \sum_{i=1}^n 
 \E \left( \left|z^n_i(\theta _0) - n^{1/\alpha_0} \Delta_i^n L \right| {\bf 1}_{\Omega_K} H_{i,n,0} \right) ,
\end{align*}
where
\begin{align*}
H_{i,n,0} = & \left( 1 + \frac{1}{X_{\frac{i-1}{n}}^q} \right)  \biggl( 1 + |z^n_i(\theta_0) |^q {\bf 1}_{z^n_i(\theta_0)<0} {\bf 1}_{\Omega_K}  + \ln(1+z^n_i(\theta_0))^q {\bf 1}_{z^n_i(\theta_0)>0} {\bf 1}_{\Omega_K}  \\
& + |n^{1/ \alpha_0} \Delta_i^n L |^q {\bf 1}_{n^{1/ \alpha_0} \Delta_i^n L<0}  + \ln (1+ |n^{1/ \alpha_0} \Delta_i^n L | )^q {\bf 1}_{n^{1/ \alpha_0} \Delta_i^n L > 0} \biggl).
\end{align*}
We use H\"{o}lder's inequality with $\frac{1}{p} + \frac{1}{p'} = 1$ and $p < \alpha_0$. As previously, we have
$$
 \E \left( H_{i,n,0}^{p'} \right) \leq C_{p',K} (\ln n)^{p'},
$$
and from Lemma \ref{L:zn-L1} equation \eqref{eq:zn(theta0)-L1} 
\begin{align*}
    \E \left(\left|z^n_i(\theta_0) - n^{1/ \alpha_0} \Delta_i^n L \right|^p {\bf 1}_{\Omega_K} \right) 
    \leq C_{p,K} \frac{1}{n^{p/\alpha_0}}.
     \end{align*}
Consequently  we obtain 
\[ \E \left( \left| \frac{1}{\sqrt{n}}\sum_{i=1}^n f_{kj}(X_{\frac{i-1}{n}}, \theta_0) \left( h_j(z^n_i(\theta_0)) - h_j(n^{1/ \alpha_0} \Delta_i^n L) \right) \right| {\bf 1}_{\Omega_K} \right) \leq C_{p,K}  \frac{\sqrt{n} \ln n}{ n^{1/\alpha_0}}.\]
The results follows from $ \frac{1}{\alpha_0} > \frac{1}{2}$.

We now show that 
\begin{equation}
    \frac{1}{\sqrt{n}} \sum_{i=1}^n F(X_{\frac{i-1}{n}}) H(n^{1/ \alpha_0} \Delta_i^n L)  \xrightarrow[n \to \infty]{\mathcal{L}-s} \Sigma^{1/2} \mathcal{N}.
\end{equation}
We will prove the stable convergence in law with respect to $\sigma(L_s, s \leq 1)$ of the process
\[\Gamma_t^n=
    \frac{1}{\sqrt{n}} \sum_{i=1}^{\floor{nt}} F(X_{\frac{i-1}{n}}) H(n^{1/ \alpha_0} \Delta_i^n L), \quad t \in [0,1],\]
in $\mathbb{D}([0, 1], \mathbb{R}^d)$ equipped with the Skorokhod topology, following the proof of Theorem 3.2 in \cite{ClementEstimating1}. To this end we introduce the processes
\[\overline{L}_t^n= \sum_{i=1}^{\floor{nt}} \Delta_i^n L, \quad t \in [0,1],\]
\[\Gamma'^{n}_t=
    \frac{1}{\sqrt{n}} \sum_{i=1}^{\floor{nt}} 
        H(n^{1/ \alpha_0} \Delta_i^n L), \quad t \in [0,1].\]
The process $(\overline{L}_t^n)_t$ converges in probability to $(L_t)_t$ for the Skorokhod topology and according to Lemma 2.8 in \cite{JacodTCL}, if $(\overline{L}_1^n, \Gamma'^{n}_1)$ converges in law to $(L_1, \gamma')$ where $\gamma'$ is a Gaussian variable independent of $L_1$ with variance $\Sigma'$ where for $1 \leq j, k \leq d$
\begin{equation}\label{eq: exp variance}
    \Sigma'_{jk}=\E (h_j h_k(L_1)), 
\end{equation}
then there exists a d-dimensional standard Brownian motion $(B_t) = (B_t^j)_{1 \leq j \leq d}$ independent of $(L_t)$ such that the processes $(\overline{L}^n, \Gamma^n, \Gamma'^n)$ converge in law to $(L, \Gamma, (\Sigma')^{1/2} B)$, where
\[\Gamma_t = \int_0^t  F(X_s) (\Sigma')^{1/2} dB_s.\]
This result implies the stable convergence stated in Theorem \ref{Th : TCL}.

To study the convergence in law of $(\overline{L}_1^n, \Gamma'^{n}_1)$, we denote by $\Phi_n$ the characteristic function of $(\overline{L}_1^n, \Gamma'^{n}_1)$ and by $\phi_n$ the characteristic function of the $(L_{\frac{1}{n}}, \frac{1}{\sqrt{n}} H(n^{1/ \alpha_0} L_{\frac{1}{n}}))$. \\
Then we have
\[ \log \Phi_n = n \log \phi_n, \]
and we just have to study the asymptotic behaviour of $\phi_n$. By definition
\[ \forall u \in \R, \; v \in \R^d, \quad \phi_n(u,v)= \E \left(e^{iuL_{\frac{1}{n}} + i \sum_{j=1}^d v_j \frac{1}{\sqrt{n}} h_j(n^{1/ \alpha_0}L_{\frac{1}{n}})} \right).\]
A Taylor expansion of the exponential function gives 
\begin{align*}
    \phi_n(u, v) & = 
    \E e^{iuL_{\frac{1}{n}}} 
    + i \frac{1}{\sqrt{n}} \sum_{j=1}^d v_j \E e^{iuL_{\frac{1}{n}}} h_j(n^{1/ \alpha_0}L_{\frac{1}{n}})
    - \frac{1}{2n} \sum_{j=1}^d v_j^2 \E e^{iuL_{\frac{1}{n}}}h_j^2 (n^{1/ \alpha_0} L_{\frac{1}{n}}) \\
    & - \frac{1}{n} \sum_{1 \leq j < k \leq d} v_j v_k \E e^{iuL_{\frac{1}{n}}}h_j h_k (n^{1/ \alpha_0} L_{\frac{1}{n}}) + o(1/n),
\end{align*}
where we used that $ \forall \; 1 \leq j \leq d, \quad \forall \; p > 0, \quad \E\left( |h_j(L_1)|^p \right) < +\infty$ to get
\[ \forall \; 1 \leq j, k, l \leq d  \quad \frac{1}{n \sqrt{n}} \left|\E ( e^{iuL_{\frac{1}{n}}} h_j h_k h_l (n^{1/ \alpha_0} L_{\frac{1}{n}}))\right| \leq \frac{1}{n \sqrt{n}} \E ( \left| h_j h_k h_l \right| (L_{1})) = o(1/n).  \]
We now study each term in the expansion of $\phi_n$.

\noindent
1. $\E e^{iuL_{\frac{1}{n}}} = (\E e^{iuL_{1}})^{1/n} = 1 + \psi(u)/n + o(1/n)$ where $\psi$ is the L\'evy-Khintchine exponent of $L_1$.

\noindent
2. For $1 \leq j \leq d$ using the self-similarity property $\E e^{iu L_{\frac{1}{n}}} h_j (n^{1/ \alpha_0} L_{\frac{1}{n}}) = \E e^{iu L_1 / n^{1/ \alpha_0}} h_j (L_1)$. But 
    $\left| \E (e^{iu L_1 / n^{1/ \alpha_0}} -1) h_j (L_1) \right| \leq \frac{|u|}{n^{1/ \alpha_0}} \E( \left|L_1 h_j (L_1)\right|)$, and $\E( \left|L_1 h_j (L_1)\right|) < +\infty$ by H\"{o}lder's inequality using $\alpha_0 > 1$. Hence $\E e^{iu L_{\frac{1}{n}}} h_j (n^{1/ \alpha_0} L_{\frac{1}{n}}) = o(1/\sqrt{n})$ since $ \E h_j (L_1) =0.$

\noindent
3. For $1 \leq j \leq d$ using the self-similarity property and by dominated convergence
   $$
   \E e^{iu L_{\frac{1}{n}}}h_j^2 (n^{1/ \alpha_0} L_{\frac{1}{n}}) = \E e^{iu L_1 / n^{1/ \alpha_0}} h_j^2 (L_1) = \E h_j^2 (L_1) + o(1).
   $$

\noindent
4. In the same way for $1 \leq j,k \leq d$, $\quad \E e^{iuL_{\frac{1}{n}}}h_j h_k (n^{1/ \alpha_0} L_{\frac{1}{n}}) = \E h_j h_k (L_1) + o(1).$
    
Putting all these results together, we finally obtain the convergence
\begin{align*}
    \log \Phi_n = n \log \phi_n & \to \psi(u) - \frac{1}{2} \sum_{j=1}^d v_j^2 \E h_j^2 (L_{1}) - \sum_{1 \leq j < k \leq d} v_j v_k \E h_j h_k (L_{1}),
\end{align*}
and we get the convergence in law of the vector $(\overline{L}_1^n, \Gamma'^{n}_1)$ to $(L_1, \gamma')$ where $\gamma'$ is a Gaussian variable independent of $L_1$ with variance
where $\gamma'$ is a Gaussian variable independent of $L_1$ with variance $\gamma'$ defined by \eqref{eq: exp variance}. This achieves the proof of Theorem \eqref{Th : TCL}.

%%%%%%%%%%fin appendix
\subsection{Proof of Theorem \ref{Th-local}} \label{Ss:Thlocal}

To prove  existence, consistency and asymptotic normality of estimating functions based estimators, we  adapt to our framework the sufficient conditions established in S{\o}rensen \cite{Sorensen}  (to obtain Theorem 2.3, Corollary 2.5 and Theorem 2.8), we also refer to Jacod and S{\o}rensen \cite{JacodSorensenUniforme}. We recall that the estimating function $G_n$ is given by \eqref{E:Gn1}-\eqref{E:Gn4},
%$$ 
% G_n(\theta)= - \nabla_\theta  L_n (\theta) = (\partial_a L_n (\theta),\partial_b L_n (\theta),\partial_{\delta} L_n (\theta),\partial_{\alpha} L_n(\theta))^T
% $$
% with $L_n$ defined by \eqref{def Ln},  and we set $J_n(\theta)=\nabla_{\theta} G_n(\theta)$.
%  For $\eta>0$, we define
%\begin{equation} \label{eq:Vn}
%    V_n^{(\eta)} = \{ \theta=(a,b,\delta, \alpha) ; || u_n^{-1}(\theta - \theta_0) ||\leq \eta\},
%\end{equation}
 that $u_n$ is defined in \eqref{eq:un} and  we set $J_n(\theta)=\nabla_{\theta} G_n(\theta)$.
With this notation, Theorem \ref{Th-local} is a consequence of the two following sufficient conditions :

\begin{enumerate}
    \item $\forall \eta > 0$ we have the following convergence in probability
\[\sup_{\theta \in \{|| u_n^{-1}(\theta - \theta_0) ||\leq \eta\} } \left| \left|  u_n^T J_n(\theta) u_n - I_{\overline{v}}(\theta_0) \right|\right| \to 0,\]
where $ I_{\overline{v}}(\theta_0)$ is a positive definite matrix.

\item $(u_n^T G_n(\theta_0))_n$ stably converges in law with respect to the $\sigma$-field $\sigma(L_s, s \leq 1)$ to $I_{\overline{v}}(\theta_0)^{1/2} \mathcal{N}$ where $\mathcal{N}$ is a standard Gaussian variable independent of $I_{\overline{v}}(\theta_0)$.
\end{enumerate}
The matrix  $J_n(\theta)= \nabla_{\theta} G_n(\theta)$ has the following expression
\begin{equation} \label{E:defJn}
J_n (\theta) =  \begin{pmatrix}
\nabla_\theta G_n^1 (\theta)^T \\
\nabla_\theta G_n^2 (\theta)^T \\
\nabla_\theta G_n^3 (\theta)^T \\
\nabla_\theta G_n^4 (\theta)^T
\end{pmatrix}  = \begin{pmatrix}
J_n^{11} (\theta) &  J_n^{21} (\theta)^T \\
J_n^{21} (\theta) &  J_n^{22} (\theta)
\end{pmatrix},
\end{equation}
with
\[ J_n^{11} (\theta) = \frac{n^{2/ \alpha}}{n^2} \sum_{i=1}^n \begin{pmatrix}
  -  \frac{1}{\delta^2 X_{\frac{i-1}{n}}^{2/ \alpha}} h'_\alpha (z^n_i(\theta))
    & \text{symm} \\
    \frac{X_{\frac{i-1}{n}}}{\delta^2 X_{\frac{i-1}{n}}^{2/ \alpha}} h'_\alpha (z^n_i(\theta))
    & - \frac{X_{\frac{i-1}{n}}^2}{\delta^2 X_{\frac{i-1}{n}}^{2/ \alpha}} h'_\alpha (z^n_i(\theta))
\end{pmatrix},\]
\[ J_n^{21} (\theta) = \frac{ n^{1/ \alpha}}{n} \sum_{i=1}^n \begin{pmatrix}
   - \frac{1}{\delta^2 X_{\frac{i-1}{n}}^{1/ \alpha}} k'_\alpha (z^n_i(\theta))
    & \frac{X_{\frac{i-1}{n}}}{\delta^2 X_{\frac{i-1}{n}}^{1/ \alpha}} k'_\alpha (z^n_i(\theta)) \\
   -  \frac{ \ln \left( n / X_{\frac{i-1}{n}} \right)}{\delta \alpha^2 X_{\frac{i-1}{n}}^{1/ \alpha}} k'_\alpha (z^n_i(\theta)) + \frac{1}{\delta X_{\frac{i-1}{n}}^{1/ \alpha}}  f'_\alpha (z^n_i(\theta))
    & J_{n,i,22}^{21} (\theta)
     \end{pmatrix},\]
\[ J_n^{22} (\theta) =  \sum_{i=1}^n \begin{pmatrix}
 -   \frac{ 1}{ \delta^2} [ k_\alpha (z^n_i(\theta)) +z^n_i(\theta) k'_\alpha (z^n_i(\theta))]
    & \text{symm} \\
  -  \frac{\ln \left( n / X_{\frac{i-1}{n}} \right)}{\alpha^2 \delta} z^n_i(\theta) k'_\alpha (z^n_i(\theta)) + \frac{z^n_i(\theta)}{\delta} f'_\alpha (z^n_i(\theta))
    & J_{n,i,22}^{22} (\theta))
\end{pmatrix},\]
\begin{align*}
 J_{n,i,22}^{21} (\theta)    = \frac{\ln \left( n / X_{\frac{i-1}{n}} \right)X_{\frac{i-1}{n}}}{\delta \alpha^2 X_{\frac{i-1}{n}}^{1/ \alpha}} k'_\alpha (z^n_i(\theta)) - \frac{X_{\frac{i-1}{n}}}{\delta X_{\frac{i-1}{n}}^{1/ \alpha}} f'_\alpha (z^n_i(\theta)), 
 \end{align*}
 \begin{align*}
    J_{n,i,22}^{22} (\theta) = - \partial_\alpha f_\alpha (z^n_i(\theta)) - \frac{ \left( \ln ( n / X_{\frac{i-1}{n}}) \right)^2}{\alpha^4}  \left(z^n_i(\theta) k'_\alpha (z^n_i(\theta)) \right) \\
    + \frac{ \ln \left( n / X_{\frac{i-1}{n}} \right)}{\alpha^2}  \left( -\frac{2}{\alpha} k_\alpha (z^n_i(\theta)) + 2z^n_i(\theta) f'_\alpha (z^n_i(\theta)) \right).
 \end{align*}

\noindent
1. \underline{Convergence of $u_n^T J_n(\theta) u_n$.} We will prove the convergence in probability
\[\sup_{(a,b) \in A, \; (\delta, \alpha) \in W_n^{(\eta)} } \left| \left|  u_n^T J_n(\theta) u_n - I_{\overline{v}}(\theta_0) \right|\right| \xrightarrow{} 0,\]
for $A$ a compact subset of $(0, +\infty) \times \R$ and $W_n^{(\eta)}$ defined in \eqref{Voisinage Wn}.
 Using the expressions of $J_n$, $u_n$ and $r_n$ (defined in \eqref{E:def-vr}), we obtain after some calculus 
\begin{equation} \label{un Jn un}
    u_n^T J_n(\theta) u_n = \begin{pmatrix}
\frac{n^{2/ \alpha}}{n^{2/ \alpha_0}} I_n^{11}(\theta) &    \frac{n^{1/ \alpha}}{n^{1/ \alpha_0}} I_n^{21}(\theta)^T r_n(\theta) v_n   \\
\frac{n^{1/ \alpha}}{n^{1/ \alpha_0}} v_n^T r_n^T(\theta) I_n^{21}(\theta)
& v_n^T r_n^T(\theta) I_n^{22}(\theta) r_n(\theta) v_n + v_n^T R_n(\theta) v_n
\end{pmatrix} ,
\end{equation}
with
\[I_n^{11}(\theta) =  \frac{1}{n} \sum_{i=1}^n \begin{pmatrix}
- \frac{ 1}{\delta^2 X_{\frac{i-1}{n}}^{2/ \alpha}} h'_{\alpha} (z^n_i(\theta)) &  \text{symm} \\
\frac{X_{\frac{i-1}{n}}}{\delta^2 X_{\frac{i-1}{n}}^{2/ \alpha}} h'_{\alpha}(z^n_i(\theta)) & - \frac{ X_{\frac{i-1}{n}}^2}{\delta^2 X_{\frac{i-1}{n}}^{2/ \alpha}} h'_{\alpha} (z^n_i(\theta))
\end{pmatrix}, \]

\[ I_n^{21}(\theta) = \frac{1}{n} \sum_{i=1}^n  \begin{pmatrix}
  -  \frac{ 1}{\delta X_{\frac{i-1}{n}}^{1/ \alpha}} k'_\alpha (z^n_i(\theta))
    &  \frac{X_{\frac{i-1}{n}}}{\delta X_{\frac{i-1}{n}}^{1/ \alpha}} k'_\alpha (z^n_i(\theta)) \\
\frac{\ln \left(X_{\frac{i-1}{n}} \right)}{\delta \alpha^2 X_{\frac{i-1}{n}}^{1/ \alpha}}  k'_\alpha (z^n_i(\theta)) + \frac{1}{\delta X_{\frac{i-1}{n}}^{1/ \alpha}} f'_\alpha (z^n_i(\theta))
    &  I_{n,i,22}^{21}(\theta) 
    \end{pmatrix},  \]

\[I_n^{22}(\theta)=\frac{1}{n} \sum_{i=1}^n  \begin{pmatrix} 
 - z^n_i(\theta) k'_{\alpha} (z^n_i(\theta)) & \text{symm} \\
  \frac{ \ln(X_{\frac{i-1}{n}})}{\alpha^2}
z^n_i(\theta) k'_{\alpha} (z^n_i(\theta)) + z^n_i(\theta) f'_{\alpha} (z^n_i(\theta)) & I_{n,i,22}^{22}(\theta)
\end{pmatrix}, \]
\[
I_{n,i,22}^{21}(\theta) = - \frac{  \ln \left(X_{\frac{i-1}{n}}\right) X_{\frac{i-1}{n}} }{\delta \alpha^2 X_{\frac{i-1}{n}}^{1/ \alpha}}  k'_\alpha (z^n_i(\theta)) - \frac{X_{\frac{i-1}{n}}}{\delta X_{\frac{i-1}{n}}^{1/ \alpha}}  f'_\alpha (z^n_i(\theta)),
\]
\[I_{n,i,22}^{22}(\theta) = - \frac{ (\ln(X_{\frac{i-1}{n}}))^2}{\alpha^4}z^n_i(\theta) k'_{\alpha} (z^n_i(\theta)) 
- 2 \frac{\ln(X_{\frac{i-1}{n}})}{\alpha^2}z^n_i(\theta) f'_{\alpha} (z^n_i(\theta)) 
-  \partial_{\alpha} f_{\alpha} (z^n_i(\theta)), \]

\[R_n(\theta) = \frac{1}{n} \sum_{i=1}^n \begin{pmatrix}
    
- \frac{1 }{\delta^2} k_{\alpha} (z^n_i(\theta)) & 0 \\
0 &
\frac{2 }{\alpha^3} k_{\alpha} (z^n_i(\theta)) (\ln(X_{\frac{i-1}{n}}) - \ln(n))
\end{pmatrix} .\]
Now using the definition of $r_n$ and $W_n^{(\eta)}$ we have
\[ 
\sup_{(\delta,\alpha) \in W_n^{(\eta)} } \left| \left| r_n(\theta) - r_n(\theta_0) \right|\right| \leq C \ln(n)w_n, \]
hence by definition of $\overline{v}$
\[\sup_{(\delta,\alpha) \in W_n^{(\eta)} } \left| \left| v_n r_n(\theta) - \overline{v} \right|\right| = \sup_{(\delta,\alpha) \in W_n^{(\eta)} } \left| \left| v_n r_n(\theta_0) - \overline{v} + v_n(r_n(\theta) - r_n(\theta_0)) \right|\right| \to 0.\]
Moreover from Theorem \ref{Th : LFGN}, observing that $\E (k_{\alpha_0}(L_1)) = 0$ from \eqref{esp func = 0} and that $|| v_n || \leq C \ln(n)$, it is immediate that
\begin{equation*}
    \sup_{ (a,b) \in A, \; (\delta, \alpha) \in W_n^{(\eta)} } \left| \left|  v_n^T R_n(\theta) v_n \right|\right| \to 0.
\end{equation*}
Consequently from the factorisation \eqref{un Jn un} and observing also that 
$$
\sup_{\alpha \in W_n^{(\eta)} } \left| \frac{n^{1/\alpha}}{n^{1/\alpha_0}} - 1 \right| \to 0,
$$
the uniform convergence of $u_n^T J_n(\theta) u_n$  reduces to the uniform convergence of $I_n(\theta)$,  for $\theta \in (a,b) \times W_n^{(\eta)}$. 
From Theorem \ref{Th : LFGN} combined with the connections \eqref{esp func} we deduce the convergence in probability, 
$$
 \sup_{(a,b) \in A, \; (\delta, \alpha) \in W_n^{(\eta)} } \left| \left|  I_n(\theta) - I(\theta_0) \right|\right| \to 0,
$$
with $I(\theta_0)$ defined by \eqref{E:Info}
and finally from \eqref{un Jn un}, we obtain
\[\sup_{(a,b) \in A, \; (\delta, \alpha) \in W_n^{(\eta)} } \left| \left|  u_n^T J_n(\theta) u_n - I_{\overline{v}}(\theta_0) \right|\right| \to 0.\]
It remains to check that $I(\theta_0)$ is  positive definite. 
Let $X = (x,y,z,w)^T \in \R^4$ and consider the quadratic form $Q(X)=X^T I(\theta_0) X$. An explicit calculus leads to the factorisation
\begin{align*}
   Q(X)
     = \int_0^1 \int_{\R} ds \varphi_{\alpha}(u) du \left( \frac{1}{\delta_0} \frac{1}{X_s^{1/ \alpha_0}} h_{\alpha_0}(u) x - \frac{1}{\delta_0} \frac{X_s}{X_s^{1/ \alpha_0}} h_{\alpha_0}(u) y  \right.\\
   \left.  + k_{\alpha_0}(u) z - [f_{\alpha_0}(u) + \frac{\ln(X_s)}{\alpha_0} k_{\alpha_0}(u)]w \right)^2 ,
\end{align*}
and the result follows.
\begin{rem} \label{R:Jn}
If $v_n=r_n(\theta_0)^{-1}$, then $\overline{v}=Id_2$ and 
\[\sup_{(\delta,\alpha) \in W_n^{(\eta)} } \left| \left| v_n r_n(\theta) - \overline{v} \right|\right| \leq (\ln n)^2 w_n.\]
So in that case we obtain the convergence in probability   
$$
\forall q >0, \quad \sup_{(a,b) \in A, \; (\delta, \alpha) \in W_n^{(\eta)} } (\ln n)^q\left| \left|  u_n^T J_n(\theta) u_n - I(\theta_0) \right|\right| \xrightarrow{} 0.
$$
\end{rem}

%%%%%%
\noindent
2. \underline{Stable convergence in law.} To study the convergence in law of $u_n^T G_n(\theta_0)$ we write
\[u_n^T G_n(\theta_0) = \frac{1}{\sqrt{n}} \begin{pmatrix}
    Id_2 & 0 \\
    0 & v_n^T r_n(\theta_0)^T
\end{pmatrix} 
\sum_{i=1}^n \begin{pmatrix}
\frac{1}{\delta_0 X_{\frac{i-1}{n}}^{1/ \alpha_0}} h_{\alpha_0}(z^n_i(\theta_0)) \\
- \frac{X_{\frac{i-1}{n}}}{\delta_0 X_{\frac{i-1}{n}}^{1/ \alpha_0}} h_{\alpha_0} (z^n_i(\theta_0)) \\
  k_{\alpha_0}(z^n_i(\theta_0)) \\
- \frac{\ln(X_{\frac{i-1}{n}})}{\alpha_0^2} k_{\alpha_0}(z^n_i(\theta_0)) - f_{\alpha_0}(z^n_i(\theta_0)) 
\end{pmatrix} ,\]
where $h_{\alpha_0}, k_{\alpha_0}$ and $f_{\alpha_0}$ satisfy Assumption \ref{A:conditions h} and from \eqref{esp func = 0} 
\[\E (h_{\alpha_0}(L_1)) = \E (k_{\alpha_0}(L_1)) = \E (f_{\alpha_0}(L_1)) = 0 .\] 
So we can apply
 Theorem \ref{Th : TCL} with
\[F(x)=\begin{pmatrix} \frac{1}{\delta_0 x^{1/ \alpha_0}} & 0 & 0 & 0 \\
0 & \frac{-x}{\delta_0 x^{1/ \alpha_0}} & 0 & 0 \\
0 & 0 & 1 & 0 \\
0 & 0 & - \frac{\ln(x)}{\alpha_0^2} & -1
\end{pmatrix}, \quad H = \begin{pmatrix} h_{\alpha_0} \\ h_{\alpha_0} \\
    k_{\alpha_0} \\ f_{\alpha_0}
\end{pmatrix},\]
and since by assumption  $ r_n(\theta_0)v_n \rightarrow  \overline{v}$, we conclude that $u_n^T G_n(\theta_0)$ stably converges in law with respect to the $\sigma$-field $\sigma(L_s, s \leq 1)$ to $I_{\overline{v}}(\theta_0)^{1/2} \mathcal{N}$ where $\mathcal{N}$ is a standard Gaussian variable independent of $I_{\overline{v}}(\theta_0)$.

\subsection{Proof of Theorem \ref{Th-global}}
We consider the normalised criteria
\begin{align} \label{E:normGn}
    \overline{G}_n(a,b, \delta, \alpha) & = \frac{n}{n^{2/ \alpha}}\begin{pmatrix}
G_n^1 (\theta) \\
G_n^2 (\theta)
\end{pmatrix}
= \frac{1}{n^{1/ \alpha}} \sum_{i=1}^n \begin{pmatrix}
     \frac{1}{\delta X_{\frac{i-1}{n}}^{1/\alpha}} h_{\alpha}(z_i^n(\theta)) \\
     \frac{- X_{\frac{i-1}{n}}}{\delta X_{\frac{i-1}{n}}^{1/ \alpha}} h_{\alpha}(z_i^n(\theta))
\end{pmatrix},
\end{align}
and we set
\begin{equation*} 
\overline{G}^{(d)}_n(a,b)=  \overline{G}_n(a,b, \tilde{\delta}_n, \tilde{\alpha}_n). 
\end{equation*} 
Solving $G^{(d)}_n(a,b)=0$ is equivalent to solve $\overline{G}^{(d)}_n(a,b) = 0$ and using Theorem 2.7.a) of Jacod and S{\o}rensen \cite{JacodSorensenUniforme}, the global uniqueness result is a consequence of the following  conditions (recalling that $(a_0,b_0)$ is an interior point of $A$, where $A$ is a compact subset of $(0, +\infty) \times \R$) :
\begin{enumerate}  
\item[(i)]  there exists $\overline{G}^{(d)}$ defined on $A$, continuously differentiable, such that we have the  convergence in probability $\overline{G}^{(d)}_n(a_0, b_0) \to 0$, $\overline{G}^{(d)}(a_0, b_0)=0$ and $(a_0, b_0)$ is the unique root of $\overline{G}^{(d)}(a,b)=0$,
    \item[(ii)]  we have the convergence in probability
    \[\sup_{(a,b) \in A} \left|\left| \nabla_{(a,b)} \overline{G}^{(d)}_n(a,b) - \nabla_{(a,b)} \overline{G}^{(d)}(a,b) \right|\right| \to 0, \]
   and $\nabla_{(a,b)} \overline{G}^{(d)}(a_0,b_0)$ is non-singular with probability one.
\end{enumerate}

We start by proving (ii). A simple computation gives
\[ \nabla_{(a,b)} \overline{G}^{(d)}_n(a,b) = \frac{1}{n} \sum_{i=1}^n \begin{pmatrix}
     \frac{-1}{\tilde{\delta}_n^2 X_{\frac{i-1}{n}}^{2/ \tilde{\alpha}_n}} h'_{\tilde{\alpha}_n}(z_i^n(a,b, \tilde{\delta}_n, \tilde{\alpha}_n)) 
     & \text{symm} \\
     \frac{X_{\frac{i-1}{n}}}{\tilde{\delta}_n^2 X_{\frac{i-1}{n}}^{2/ \tilde{\alpha}_n}} h'_{\tilde{\alpha}_n}(z_i^n(a,b, \tilde{\delta}_n, \tilde{\alpha}_n)) 
     & \frac{- X_{\frac{i-1}{n}}^2}{\tilde{\delta}_n^2 X_{\frac{i-1}{n}}^{2/ \tilde{\alpha}_n}} h'_{\tilde{\alpha}_n}(z_i^n(a,b, \tilde{\delta}_n, \tilde{\alpha}_n)) 
\end{pmatrix},\]
and we set
\[ \overline{G}^{(d)}(a,b) = \begin{pmatrix}
      \left( (a_0 - a) \int_0^1 \frac{ds}{\delta_0^2 X_s^{2/ \alpha_0}} - (b_0 - b) \int_0^1 \frac{X_s ds}{\delta_0^2 X_s^{2/ \alpha_0}} \right) \E(h'_{\alpha_0}(L_1)) \\
     - \left( (a_0 - a) \int_0^1 \frac{X_s ds}{\delta_0^2 X_s^{2/ \alpha_0}} - (b_0 - b) \int_0^1 \frac{X_s^2 ds}{\delta_0^2 X_s^{2/ \alpha_0}} \right) \E(h'_{\alpha_0}(L_1))
\end{pmatrix}.\]
For  $\eta > 0$, we consider the neighborhood of $(\delta_0, \alpha_0)$
$$
W_n^{(\eta)} = \{ (\delta, \alpha) ; \left| \left| \frac{\sqrt{n}}{\ln(n)} \begin{pmatrix} \delta - \delta_0 \\
 \alpha - \alpha_0 \end{pmatrix} \right| \right| \leq \eta\}.
$$
$W_n^{(\eta)}$ satisfies \eqref{Voisinage Wn} and since $\frac{\sqrt{n}}{\ln n}(\tilde{\delta}_n-\delta_0, \tilde{\alpha}_n- \alpha_0)$ is tight, we have
$$
\sup_n \PP( (\tilde{\delta}_n, \tilde{\alpha}_n) \notin  W_n^{(\eta)}) \xrightarrow[ \eta \rightarrow \infty]{} 0.
$$
So introducing the localisation ${\bf 1}_{(\tilde{\delta}_n, \tilde{\alpha}_n) \in  W_n^{(\eta)}}$, we just have to prove the convergence in probability
\[\sup_{(a,b) \in A, \; (\delta, \alpha) \in W_n^{(\eta)}} \left|\left| \nabla_{(a,b)} \overline{G}_n(a,b, \delta, \alpha) - \nabla_{(a,b)} \overline{G}^{(d)}(a,b) \right|\right| \to 0.\]
This convergence is immediate from Theorem \ref{Th : LFGN}.
Moreover, from \eqref{esp func} we check 
\[ \nabla_{(a,b)} \overline{G}^{(d)}(a_0,b_0) = I^{11} (\theta_0),\]
 which is non-singular from Theorem \ref{Th-local}.

We now turn to (i). We have from Taylor's formula 
\begin{align} \label{E:taylor}
    \overline{G}^{(d)}_n(a_0, b_0) &  = \overline{G}_n(a_0, b_0, \delta_0, \alpha_0) \\
     & + \int_0^1  \nabla_{(\delta, \alpha)} \overline{G}_n(a_0, b_0, \delta_0 + t (\tilde{\delta}_n  - \delta_0), \alpha_0 + t(\tilde{\alpha}_n - \alpha_0)) dt \begin{pmatrix}
\tilde{\delta}_n  - \delta_0 \\
\tilde{\alpha}_n - \alpha_0
\end{pmatrix}. \nonumber
\end{align}
Since $\E (h_{\alpha_0} (L_1)) = 0$, we deduce from Theorem \ref{Th : TCL} that $\frac{n^{1/ \alpha_0}}{\sqrt{n}}\overline{G}_n(a_0, b_0, \delta_0, \alpha_0)$ converges in law and  using  $1/ \alpha_0 > 1/2$, we conclude  that $\overline{G}_n(a_0, b_0, \delta_0, \alpha_0)$ converges in probability to zero. 

For the second term, introducing as previously the localisation ${\bf 1}_{(\tilde{\delta}_n, \tilde{\alpha}_n) \in  W_n^{(\eta)}}$, we just have to prove the convergence in probability
\begin{equation} \label{E:term2}
\sup_{(\delta, \alpha) \in W_n^{(\eta)}}  \left|\left| \int_0^1  \nabla_{(\delta, \alpha)} \overline{G}_n(a_0, b_0, \delta_0 + t (\delta  - \delta_0), \alpha_0 + t(\alpha - \alpha_0)) dt \begin{pmatrix}
\delta  - \delta_0 \\
\alpha - \alpha_0
\end{pmatrix} \right|\right| \rightarrow 0.
\end{equation}
But from \eqref{E:normGn}
\[  \nabla_{(\delta, \alpha)} \overline{G}_n (a,b,\delta, \alpha) = \frac{1}{n^{1/ \alpha}} \sum_{i=1}^n \begin{pmatrix}
    \frac{- 1}{\delta^2 X_{\frac{i-1}{n}}^{1/ \alpha}} k'_\alpha (z_i^n(\theta))
    & \partial_{\alpha} \overline{G}^1_{n,i} (\theta)\\
     \frac{X_{\frac{i-1}{n}}}{\delta^2 X_{\frac{i-1}{n}}^{1/ \alpha}} k'_\alpha (z_i^n(\theta))
    & \partial_{\alpha} \overline{G}^2_{n,i} (\theta)
\end{pmatrix},\]
with
$$
\partial_{\alpha} \overline{G}^1_{n,i} (\theta)=\frac{- \ln \left( n / X_{\frac{i-1}{n}} \right)}{\delta \alpha^2 X_{\frac{i-1}{n}}^{1/ \alpha}} k'_\alpha (z_i^n(\theta)) 
    + \frac{1}{\delta X_{\frac{i-1}{n}}^{1/ \alpha}}  f'_\alpha (z_i^n(\theta)) 
    + \frac{2 \ln(n)}{\delta \alpha^2 X_{\frac{i-1}{n}}^{1/ \alpha}} h_\alpha (z_i^n(\theta)), 
$$
$$
\partial_{\alpha} \overline{G}^2_{n,i} (\theta)=\frac{\ln \left( n / X_{\frac{i-1}{n}} \right)X_{\frac{i-1}{n}}}{\delta \alpha^2 X_{\frac{i-1}{n}}^{1/ \alpha}} k'_\alpha (z_i^n(\theta)) 
    - \frac{X_{\frac{i-1}{n}}}{\delta X_{\frac{i-1}{n}}^{1/ \alpha}} f'_\alpha (z_i^n(\theta))
    - \frac{2 \ln(n) X_{\frac{i-1}{n}}}{\delta \alpha^2 X_{\frac{i-1}{n}}^{1/ \alpha}} h_\alpha (z_i^n(\theta)).
$$
 Using Theorem \ref{Th : LFGN}, we see from the  expression of $\nabla_{(\delta, \alpha)} \overline{G}_n$ that
 $$
 \sup_{(\delta, \alpha) \in W_n^{(\eta)}} \left|\left| \frac{n^{1/ \alpha}}{n \ln(n)} \nabla_{(\delta, \alpha)} \overline{G}_n (a_0, b_0,\delta, \alpha) \right|\right|, $$
  is tight and since
\[ \sup_{(\delta, \alpha) \in W_n^{(\eta)}}  \left|\left| \frac{n \ln(n)}{n^{1/\alpha}} \begin{pmatrix} 
\delta  - \delta_0 \\
\alpha - \alpha_0
\end{pmatrix} \right|\right| \leq C  \frac{(\ln n)^{2}}{n^{1/ \alpha_0 - 1/2}} \to 0,\]
we deduce \eqref{E:term2} and we  conclude that $\overline{G}^{(d)}_n(a_0, b_0)$ converges to zero in probability. Finally, from the expression of $\overline{G}^{(d)}$, we see immediately that $(a_0, b_0)$ is the unique root of $\overline{G}^{(d)}(a,b)=0$ and (i) is proved.

To finish the proof of Theorem \ref{Th-global}, it remains to show that $ \frac{n^{1/ \alpha_0 }}{\sqrt{n}(\ln n)^{2}} ( \hat{a}_n - a_0, \hat{b}_n - b_0)$
    is tight. We have $ \overline{G}^{(d)}_n(\hat{a}_n, \hat{b}_n) = 0$ on a set $D_n$ with $\PP(D_n) \to 1$, and from Taylor's formula, we obtain on $D_n$
  \begin{align*}
    - \overline{G}^{(d)}_n(a_0, b_0) = 
     \int_0^1  \nabla_{(a, b)} \overline{G}^{(d)}_n(a_0 + t (\hat{a}_n - a_0), b_0 + t (\hat{b}_n - b_0)) dt \begin{pmatrix}
\hat{a}_n  - a_0 \\
\hat{b}_n - b_0
\end{pmatrix}.
\end{align*}  
 From ii) and using that $(\hat{a}_n, \hat{b}_n)$ converges in probability to $(a_0, b_0)$, we have the convergence in probability
\[ \int_0^1  \nabla_{(a, b)} \overline{G}^{(d)}_n(a_0 + t (\hat{a}_n - a_0), b_0 + t (\hat{b}_n - b_0)) dt \rightarrow \nabla_{(a,b)} \overline{G}^{(d)}(a_0,b_0) = I^{11} (\theta_0),\]   
where $I^{11} (\theta_0)$ is non-singular.  

Consequently, we just have to prove the tightness of   $\frac{n^{1/ \alpha_0 }}{\sqrt{n}(\ln n)^{2}}\overline{G}^{(d)}_n(a_0, b_0)$. We study each term of  the expansion \eqref{E:taylor}. As in the proof of (i), we see from Theorem \ref{Th : TCL} that $\frac{n^{1/ \alpha_0}}{\sqrt{n} (\ln n)^2}\overline{G}_n(a_0, b_0, \delta_0, \alpha_0)$  converges in probability to zero. Turning to the second term in \eqref{E:taylor} and proceeding as in (i), we have the tightness of $\sup_{(\delta, \alpha) \in W_n^{(\eta)}} \left|\left| \frac{n^{1/ \alpha}}{n \ln(n)} \nabla_{(\delta, \alpha)} \overline{G}_n (a_0, b_0,\delta, \alpha) \right|\right| $ and by definition of $W_n^{(\eta)}$
\[ \sup_{ (\delta, \alpha) \in W_n^{(\eta)}} \left|\left|  \frac{n \ln(n)}{n^{1/\alpha}} \begin{pmatrix} 
\tilde{\delta}_n  - \delta_0 \\
\tilde{\alpha}_n - \alpha_0
\end{pmatrix}\right|\right| \leq C \frac{\sqrt{n}(\ln n)^{2}}{n^{1/ \alpha_0}}.\]
This shows the tightness of
\[ \frac{n^{1/ \alpha_0}}{\sqrt{n} (\ln n)^2} \int_0^1  \nabla_{(\delta, \alpha)} \overline{G}_n(a_0, b_0, \delta_0 + t (\tilde{\delta}_n  - \delta_0), \alpha_0 + t(\tilde{\alpha}_n - \alpha_0)) dt \begin{pmatrix}
\tilde{\delta}_n  - \delta_0 \\
\tilde{\alpha}_n - \alpha_0
\end{pmatrix}.\]
We conclude that $\frac{n^{1/ \alpha_0}}{\sqrt{n} (\ln n)^2}(
\hat{a}_n  - a_0 ,\hat{b}_n - b_0)$
 is tight.

\subsection{Proof of Theorem \ref{Estimation delta}}
Using that
\[\tilde{\delta}_n - \delta_0 = (\tilde{\delta}_n(\frac{1}{2}) - \delta_0^{1/2}) (\tilde{\delta}_n(\frac{1}{2}) + \delta_0^{1/2}), \]
the proof reduces to show that $\tilde{\delta}_n(\frac{1}{2}) \xrightarrow{} \delta_0^{1/2}$ and that $\frac{\sqrt{n}}{\ln(n)} (\tilde{\delta}_n(\frac{1}{2}) - \delta_0^{1/2})$ is tight. We recall that $\tilde{\alpha}_n := \tilde{\alpha}_n(\frac{1}{2})$ and $\tilde{\delta}_n(\frac{1}{2})$ are respectively  defined by \eqref{Estimateur alpha} and \eqref{E:pre-delta-p}.
We start with the decomposition
\begin{align} \label{E:delta}
    \tilde{\delta}_n(\frac{1}{2}) = \frac{1}{m_{\frac{1}{2}}(\alpha_0)} 
    \left( 1 + \epsilon^1(\tilde{\alpha}_n) \right) 
    \left( 1 + \epsilon_n^2(\tilde{\alpha}_n) \right)
    \frac{1}{n} \sum_{i=2}^n A^n_i 
    \left( 1 + (X_{\frac{i-2}{n}}^{(1/ \alpha_0 - 1/ \hat{\alpha}_n)/2} - 1 ) \right),
\end{align}
where 
$$
\epsilon^1(\tilde{\alpha}_n)= \frac{m_{\frac{1}{2}}(\alpha_0) - m_{\frac{1}{2}}(\tilde{\alpha}_n)}{m_{\frac{1}{2}}(\tilde{\alpha}_n)},  \quad \quad \epsilon_n^2(\tilde{\alpha}_n)= n^{ (1/ \tilde{\alpha}_n - 1/ \alpha_0)/2 } -1,
$$
\begin{align*}
 A^n_i =  n^{1/ 2\alpha_0} \frac{\left|\Delta_i^n X- \Delta_{i-1}^n X\right|^{1/2} }{X_{\frac{i-2}{n}}^{1/ 2\alpha_0}}.
\end{align*}
We  will prove below that $\frac{1}{n }\sum_{i=2}^n A^n_i$ converges to $m_{\frac{1}{2}}(\alpha_0)\delta_0^{1/2}$ in probability and that $\sqrt{n} (\frac{1}{n }\sum_{i=2}^n A^n_i-m_{\frac{1}{2}}(\alpha_0)\delta_0^{1/2})$ converges stably in law. Now  since $\sqrt{n}(\tilde{\alpha}_n - \alpha_0)$ is tight, we will use the localisation $\{ \tilde{\alpha}_n \in \tilde{W}_n^{(\eta)} \} $ where  $\tilde{W}_n^{(\eta)} = \{\alpha \ ; \ \left| \sqrt{n}(\alpha - \alpha_0) \right| \leq \eta\} $. 
So we deduce (recalling \eqref{E:mp}) 
\[ \sup_{ \alpha \in \tilde{W}_n^{(\eta)}}\vert \epsilon^1(\alpha) \vert \leq \frac{C}{\sqrt{n}}, \quad  \sup_{ \alpha \in \tilde{W}_n^{(\eta)}} \vert \epsilon_n^2(\alpha) \vert \leq \frac{C \ln(n)}{\sqrt{n}}, \]
\[ \sup_{ \alpha \in \tilde{W}_n^{(\eta)}}\left| X_{\frac{i-2}{n}}^{(1/ \alpha_0 - 1/ \hat{\alpha}_n)/2} - 1 \right| \leq  \frac{C}{\sqrt{n}} \left|\ln(X_{\frac{i-2}{n}})\right|. \]
Combining these results with \eqref{E:delta}, we can write
\begin{align*}
    \tilde{\delta}_n(\frac{1}{2}) - \delta_0^{1/2} =   \frac{1}{m_{\frac{1}{2}}(\alpha_0)} \frac{1}{n} \sum_{i=2}^n A_i - \delta_0^{1/2} 
     + \frac{1}{m_{\frac{1}{2}}(\alpha_0)} \epsilon_n^2(\tilde{\alpha}_n) \frac{1}{n} \sum_{i=2}^n A_i
+ R_n, 
\end{align*}
where $\frac{\sqrt{n}}{\ln(n)}R_n  \to 0$ in probability. 
But from Taylor's formula 
$$
\epsilon_n^2(\tilde{\alpha}_n)=- \ln(n)(\tilde{\alpha}_n - \alpha_0) \frac{1}{2 \tilde{\alpha}_n \alpha_0} e^{c_n} \; \text{with} \; |c_n| \leq \frac{1}{2}\ln(n) |1/ \tilde{\alpha}_n - 1/ \alpha_0|,
$$  
 and recalling that $\sqrt{n}( \tilde{\alpha}_n - \alpha_0)$ stably converges in law we obtain both the convergence of $ \tilde{\delta}_n(\frac{1}{2})$ to $\delta_0^{1/2}$ and the tightness of $\frac{\sqrt{n}}{\ln(n)} ( \tilde{\delta}_n(\frac{1}{2}) - \delta_0^{1/2} )$.

{\underline{ Convergence of  $\frac{1}{n }\sum_{i=2}^n A^n_i$.} We end the proof of Theorem \ref{Estimation delta} by showing that $\frac{1}{n }\sum_{i=2}^n A^n_i$ converges to $m_{\frac{1}{2}}(\alpha_0)\delta_0^{1/2}$ in probability and that $\sqrt{n} (\frac{1}{n }\sum_{i=2}^n A^n_i-m_{\frac{1}{2}}(\alpha_0)\delta_0^{1/2})$ converges stably in law.
We have the decomposition
\begin{align*}
\frac{1}{n} \sum_{i=2}^n A_i - m_{\frac{1}{2}}(\alpha_0)\delta_0^{1/2} =  &  \frac{1}{n} \sum_{i=2}^n \left( A_i - \delta_0^{1/2} n^{1/ 2\alpha_0} |\Delta_i^n L - \Delta_{i-1}^n L|^{1/2} \right) \\
 & +  \delta_0^{1/2} \left( \frac{1}{n} \sum_{i=2}^n  n^{1/ 2\alpha_0} |\Delta_i^n L - \Delta_{i-1}^n L|^{1/2} - m_{\frac{1}{2}}(\alpha_0) \right).
\end{align*}
Following Todorov \cite{Todorov} (Term $A_1$ in the proof of Theorem 2), we have  that 
\[\sqrt{n}\left( \frac{1}{n} \sum_{i=2}^n  n^{1/ 2\alpha_0} |\Delta_i^n L - \Delta_{i-1}^n L|^{1/2} - m_{\frac{1}{2}}(\alpha_0) \right)\] converges stably in law and we conclude 
 by showing that 
 \begin{equation} \label{E:neg}
 \frac{1}{n} \sum_{i=2}^n \left( A_i - \delta_0^{1/2} n^{1/ 2\alpha_0} |\Delta_i^n L - \Delta_{i-1}^n L|^{1/2} \right) =o_P(\frac{1}{\sqrt{n}}).
 \end{equation}
  We have from \eqref{eq:EDS} after some calculus
\begin{align*}
\frac{n^{1/ \alpha_0}}{X_{\frac{i-2}{n}}^{1/ \alpha_0}} \left( \Delta_i^n X- \Delta_{i-1}^n X\right) 
%& = \frac{- b_0 n^{1/ \alpha_0}}{X_{\frac{i-2}{n}}^{1/ \alpha_0}} \left( \int_\frac{i-1}{n}^\frac{i}{n} X_s ds - \int_\frac{i-2}{n}^\frac{i-1}{n} X_s ds \right) 
%+ \frac{\delta_0 n^{1/ \alpha_0}}{X_{\frac{i-2}{n}}^{1/ \alpha_0}} \left(\int_\frac{i-1}{n}^\frac{i}{n} X_s^{1/ \alpha_0} dL_s - \int_\frac{i-2}{n}^\frac{i-1}{n} X_s^{1/ \alpha_0} dL_s \right) \\
 = \xi^1_i + \xi^2_i,
\end{align*}
with
\begin{align*}
\xi^1_i &= \delta_0 n^{1/ \alpha_0} (\Delta_{i}^n L - \Delta_{i-1}^n L), \\
\xi^2_i & = \frac{- b_0 n^{1/ \alpha_0}}{X_{\frac{i-2}{n}}^{1/ \alpha_0}} \left( \int_\frac{i-1}{n}^\frac{i}{n} (X_s - X_\frac{i-2}{n}) ds - \int_\frac{i-2}{n}^\frac{i-1}{n} (X_s - X_\frac{i-2}{n}) ds \right) \\
& + \frac{\delta_0 n^{1/ \alpha_0}}{X_{\frac{i-2}{n}}^{1/ \alpha_0}} \left( \int_\frac{i-1}{n}^\frac{i}{n} (X_s^{1/ \alpha_0} - X_\frac{i-1}{n}^{1/ \alpha_0}) dL_s - \int_\frac{i-2}{n}^\frac{i-1}{n} (X_s^{1/ \alpha_0} - X_\frac{i-2}{n}^{1/ \alpha_0} )dL_s \right) \\
& + \frac{\delta_0 }{X_{\frac{i-2}{n}}^{1/ \alpha_0}} (X_\frac{i-1}{n}^{1/ \alpha_0} - X_\frac{i-2}{n}^{1/ \alpha_0}) n^{1/ \alpha_0} \Delta_{i}^n L.
\end{align*}
Consequently with this notation
\begin{equation} \label{E:Ai}
\frac{1}{n} \sum_{i=2}^n \left( A_i - \delta_0^{1/2} n^{1/ 2\alpha_0} |\Delta_i^n L - \Delta_{i-1}^n L|^{1/2} \right)=\frac{1}{n} \sum_{i=2}^n( | \xi^1_i + \xi^2_i |^{1/2} - | \xi^1_i |^{1/2}).
\end{equation}
But we have the bound
\begin{align*}
 \left| |\xi^1_i + \xi^2_i|^{1/2} - |\xi^1_i|^{1/2} \right| & = \frac{| |\xi^1_i + \xi^2_i| - |\xi^1_i||}{ |\xi^1_i + \xi^2_i|^{1/2} + |\xi^1_i|^{1/2}}\\
& \leq C \left( |\xi^1_i|^{-1/2} |\xi^2_i| {\bf 1}_{|\xi^1_i| > \epsilon_n}
+ |\xi^2_i|^{1/2} {\bf 1}_{|\xi^1_i| \leq \epsilon_n} \right),
\end{align*}
where from Proposition \eqref{P:moments} and Proposition \eqref{P:intstoch}, 
\[ \forall \ 0 < p < \alpha_0, \quad \E(|\xi^2_i|^p) \leq C_p \frac{1}{n^{p/\alpha_0}}. \]
Now using the fact that the density of a symmetric $\alpha$-stable distribution is bounded, we have $\PP(|\xi^1_i| \leq \epsilon) \leq C \epsilon.$ Moreover we have $\forall \ p \in (-1, \alpha_0)$, $\E(|\xi^1_i|^p) < + \infty.$ \\
From H\"{o}lder's inequality with $p < \alpha_0$, choosing $p$ arbitrarily close to $\alpha_0$ hence $1/p'$ arbitrarily close to $\frac{\alpha_0 - 1}{\alpha_0}$ and $p'/2 > 1$
\begin{align*}
    \E \left( |\xi^1_i|^{-1/2} |\xi^2_i|{\bf 1}_{| \xi^1_i| > \epsilon_n} 
    \right) & \leq \left(\E  (|\xi^1_i|^{-p'/2} {\bf 1}_{| \xi^1_i| > \epsilon_n} )
    \right)^{1/p'} \left(\E |\xi^2_i|^p\right)^{1/p}  \\   
    & \leq \left(\E ( |\xi^1_i|^{-1 + l} |\xi^i_1|^{-p'/2 + 1 -l}{\bf 1}_{| \xi^i_1| > \epsilon_n} )
    \right)^{1/p'} \left(\E |\xi^2_i|^p\right)^{1/p} \\
    & \leq C \epsilon_n^{-1/2 + 1/p' - l/p'} \frac{1}{n^{1/ \alpha_0}} \quad \text{with } l>0 \text{ arbitrarily small}.
\end{align*}
Moreover using H\"{o}lder's inequality with $p/2 < \alpha_0$, choosing $p$ arbitrarily close to $2 \alpha_0$ hence $1/p'$ arbitrarily close to $\frac{\alpha_0-1/2}{\alpha_0}$
\begin{align*}
    \E \left( |\xi^2_i|^{1/2} {\bf 1}_{| \xi^1_i| \leq \epsilon_n} 
    \right)
    & \leq \PP(|\xi^1_i| \leq \epsilon_n)^{1/p'} \left(\E |\xi^2_i|^{p/2}\right)^{1/p} \\
    & \leq C \epsilon_n^{1/p'} \frac{1}{n^{1/2\alpha_0}}.
\end{align*}
Taking $\epsilon_n = n^{-\frac{1}{\alpha_0 + 1}}$ and observing that
$\frac{1}{\alpha_0} + \frac{1}{\alpha_0 + 1} (\frac{\alpha_0 - 1}{\alpha_0}- 1/2) > 1/2$ and $\frac{1}{ 2\alpha_0} +\frac{1}{\alpha_0 + 1} \frac{\alpha_0-1/2}{\alpha_0} > 1/2$,
we conclude that for some $l>0$
\[ \E (\left||\xi^1_i + \xi^2_i|^{1/2} - |\xi^1_i|^{1/2} \right| ) \leq \frac{C}{n^{1/2 + l}}.\]
Taking the expectation in \eqref{E:Ai} and combining with the previous inequality we obtain \eqref{E:neg}. 

\vspace{0.5cm}

{\bf This work was supported by french ANR reference ANR-21-CE40-0021.}

%\bibliographystyle{plain}
%\bibliographystyle{imsart-number.bst} 
%\bibliography{biblio}

\begin{thebibliography}{10}

\bibitem{TransformeeLaplace}
M\'{a}ty\'{a}s Barczy, Mohamed Ben~Alaya, Ahmed Kebaier, and Gyula Pap.
\newblock Asymptotic properties of maximum likelihood estimator for the growth
  rate of a stable {CIR} process based on continuous time observations.
\newblock {\em Statistics}, 53(3):533--568, 2019.

\bibitem{IntroPowerVarSauts}
Ole~E. Barndorff-Nielsen, Neil Shephard, and Matthias Winkel.
\newblock Limit theorems for multipower variation in the presence of jumps.
\newblock {\em Stochastic Process. Appl.}, 116(5):796--806, 2006.

\bibitem{EfficientEstimatorMasuda}
Alexandre Brouste and Hiroki Masuda.
\newblock Efficient estimation of stable {L}\'{e}vy process with symmetric
  jumps.
\newblock {\em Stat. Inference Stoch. Process.}, 21(2):289--307, 2018.

\bibitem{ClementEstimating1}
Emmanuelle Cl\'{e}ment and Arnaud Gloter.
\newblock Estimating functions for {SDE} driven by stable {L}\'{e}vy processes.
\newblock {\em Ann. Inst. Henri Poincar\'{e} Probab. Stat.}, 55(3):1316--1348,
  2019.

\bibitem{ClementEstimating}
Emmanuelle Cl\'{e}ment and Arnaud Gloter.
\newblock Joint estimation for {SDE} driven by locally stable {L}\'{e}vy
  processes.
\newblock {\em Electron. J. Stat.}, 14(2):2922--2956, 2020.

\bibitem{CGN}
Emmanuelle Cl\'{e}ment, Arnaud Gloter, and Huong Nguyen.
\newblock L{AMN} property for the drift and volatility parameters of a sde
  driven by a stable {L}\'{e}vy process.
\newblock {\em ESAIM Probab. Stat.}, 23:136--175, 2019.

\bibitem{CIR}
John~C. Cox, Jonathan~E. Ingersoll, Jr., and Stephen~A. Ross.
\newblock A theory of the term structure of interest rates.
\newblock {\em Econometrica}, 53(2):385--407, 1985.

\bibitem{ProbaAlphaStableNegative}
Klaus Fleischmann, Leonid Mytnik, and Vitali Wachtel.
\newblock Optimal local {H}\"{o}lder index for density states of superprocesses
  with {$(1+\beta)$}-branching mechanism.
\newblock {\em Ann. Probab.}, 38(3):1180--1220, 2010.

\bibitem{Foucart}
Cl\'{e}ment Foucart and Ger\'{o}nimo Uribe~Bravo.
\newblock Local extinction in continuous-state branching processes with
  immigration.
\newblock {\em Bernoulli}, 20(4):1819--1844, 2014.

\bibitem{ExistsSolution}
Zongfei Fu and Zenghu Li.
\newblock Stochastic equations of non-negative processes with jumps.
\newblock {\em Stochastic Process. Appl.}, 120(3):306--330, 2010.

\bibitem{JacodTCL}
Jean Jacod.
\newblock The {E}uler scheme for {L}\'{e}vy driven stochastic differential
  equations: limit theorems.
\newblock {\em Ann. Probab.}, 32(3A):1830--1872, 2004.

\bibitem{TriangularArray}
Jean Jacod and Philip Protter.
\newblock {\em Discretization of processes}, volume~67 of {\em Stochastic
  Modelling and Applied Probability}.
\newblock Springer, Heidelberg, 2012.

\bibitem{JacodSorensenUniforme}
Jean Jacod and Michael S{\o}rensen.
\newblock A review of asymptotic theory of estimating functions.
\newblock {\em Stat. Inference Stoch. Process.}, 21(2):415--434, 2018.

\bibitem{BranchingCIR}
Ying Jiao, Chunhua Ma, and Simone Scotti.
\newblock Alpha-{CIR} model with branching processes in sovereign interest rate
  modeling.
\newblock {\em Finance Stoch.}, 21(3):789--813, 2017.

\bibitem{alpha-Heston}
Ying Jiao, Chunhua Ma, Simone Scotti, and Chao Zhou.
\newblock The alpha-{H}eston stochastic volatility model.
\newblock {\em Math. Finance}, 31(3):943--978, 2021.

\bibitem{ChangementTemps}
J.~Kallsen and A.~N. Shiryaev.
\newblock Time change representation of stochastic integrals.
\newblock {\em Teor. Veroyatnost. i Primenen.}, 46(3):579--585, 2001.

\bibitem{KW}
Kiyoshi Kawazu and Shinzo Watanabe.
\newblock Branching processes with immigration and related limit theorems.
\newblock {\em Teor. Verojatnost. i Primenen.}, 16:34--51, 1971.

\bibitem{Li}
Zenghu Li.
\newblock Continuous-state branching processes with immigration.
\newblock In {\em From probability to finance---lecture notes of {BICMR}
  {S}ummer {S}chool on {F}inancial {M}athematics}, Math. Lect. Peking Univ.,
  pages 1--69. Springer, Singapore, 2020.

\bibitem{LongNeg}
Zenghu Li and Chunhua Ma.
\newblock Asymptotic properties of estimators in a stable
  {C}ox-{I}ngersoll-{R}oss model.
\newblock {\em Stochastic Process. Appl.}, 125(8):3196--3233, 2015.

\bibitem{Li-Mytnik}
Zenghu Li and Leonid Mytnik.
\newblock Strong solutions for stochastic differential equations with jumps.
\newblock {\em Ann. Inst. Henri Poincar\'{e} Probab. Stat.}, 47(4):1055--1067,
  2011.

\bibitem{Long}
Hongwei Long.
\newblock Parameter estimation for a class of stochastic differential equations
  driven by small stable noises from discrete observations.
\newblock {\em Acta Math. Sci. Ser. B (Engl. Ed.)}, 30(3):645--663, 2010.

\bibitem{Masuda}
Hiroki Masuda.
\newblock Joint estimation of discretely observed stable {L}\'{e}vy processes
  with symmetric {L}\'{e}vy density.
\newblock {\em J. Japan Statist. Soc.}, 39(1):49--75, 2009.

\bibitem{Masuda19}
Hiroki Masuda.
\newblock Non-{G}aussian quasi-likelihood estimation of {SDE} driven by locally
  stable {L}\'{e}vy process.
\newblock {\em Stochastic Process. Appl.}, 129(3):1013--1059, 2019.

\bibitem{Masuda23}
Hiroki Masuda.
\newblock Optimal stable {O}rnstein-{U}hlenbeck regression.
\newblock {\em Jpn. J. Stat. Data Sci.}, 6(1):573--605, 2023.

\bibitem{Nolan97}
John~P. Nolan.
\newblock Numerical calculation of stable densities and distribution functions.
\newblock {\em Comm. Statist. Stochastic Models}, 13(4):759--774, 1997.
\newblock Heavy tails and highly volatile phenomena.

\bibitem{Sato}
Ken-iti Sato.
\newblock {\em L\'{e}vy processes and infinitely divisible distributions},
  volume~68 of {\em Cambridge Studies in Advanced Mathematics}.
\newblock Cambridge University Press, Cambridge, 2013.
\newblock Translated from the 1990 Japanese original, Revised edition of the
  1999 English translation.

\bibitem{Sorensen}
Michael S{\o}rensen.
\newblock On asymptotics of estimating functions.
\newblock {\em Braz. J. Probab. Stat.}, 13(2):111--136, 1999.

\bibitem{Todorov}
Viktor Todorov.
\newblock Power variation from second order differences for pure jump
  semimartingales.
\newblock {\em Stochastic Process. Appl.}, 123(7):2829--2850, 2013.

\bibitem{EstimationPowerVariationTodorov}
Viktor Todorov and George Tauchen.
\newblock Limit theorems for power variations of pure-jump processes with
  application to activity estimation.
\newblock {\em Ann. Appl. Probab.}, 21(2):546--588, 2011.

\bibitem{EstimationPowerVarWoerner}
Jeannette H.~C. Woerner.
\newblock Purely discontinuous l\'evy processes and power variation: inference
  for integrated volatility and the scale parameter.
\newblock Ofrc working papers series, Oxford Financial Research Centre, 02
  2003.

\bibitem{Yang}
Xu~Yang.
\newblock Maximum likelihood type estimation for discretely observed {CIR}
  model with small {$\alpha$}-stable noises.
\newblock {\em Statist. Probab. Lett.}, 120:18--27, 2017.

\bibitem{Zolotarev}
V.~M. Zolotarev.
\newblock {\em One-dimensional stable distributions}, volume~65 of {\em
  Translations of Mathematical Monographs}.
\newblock American Mathematical Society, Providence, RI, 1986.
\newblock Translated from the Russian by H. H. McFaden, Translation edited by
  Ben Silver.

\end{thebibliography}

\end{document}